\let\amslrcorner\lrcorner
\let\lrcorner\amslrcorner
\def\sideremark#1{\ifvmode\leavevmode\fi\vadjust{\vbox to0pt{\vss
\hbox to 0pt{\hskip\hsize\hskip1em%
\vbox{\hsize2cm\tiny\raggedright\pretolerance10000%
\noindent {\color{red}{#1}}\hfill}\hss}\vbox to8pt{\vfil}\vss}}}%
\theoremstyle{plain}
\newtheorem{propn}{Proposition}[section]
\newtheorem{thm}[propn]{Theorem}
\newtheorem{lemma}[propn]{Lemma}
\newtheorem{cor}[propn]{Corollary}
\theoremstyle{definition}
\newtheorem{exo}[propn]{Example}
\newtheorem{rem}[propn]{Remark}
\def \bbR{\mathbb{R}}
\def\.{\cdot}
\def\d{{\mathrm d}}
\def\vs{\vskip .6cm}
\def\t{\tilde}
\def\beq{\begin{equation}}
\def\eeq{\end{equation}}
\def\bea{\begin{eqnarray*}}
\def\eea{\end{eqnarray*}}
\def\beaa{\begin{eqnarray}}
\def\eeaa{\end{eqnarray}}
\def\ba{\begin{array}}
\def\ea{\end{array}}
\def \L{\mathscr{L}}
\def \RM{\mathbb{R}}
\def \CM{\mathbb{C}}
\def\Ric{\mathrm{Ric}}
\def\id{\mathrm{id}}
\def\tr{\mathrm{tr}}
\def\Aut{\mathrm{Aut }}
\def\Hol{\mathrm{Hol}}
\def\Hom{\mathrm{Hom}}
\def\Sym{\mathrm{Sym}}
\def\su{\mathfrak{su}}
\def\sp{\mathfrak{sp}}
\def\gg{\mathfrak{g}}
\def\C{\mathscr{C}}
\def\D{\mathrm{D}}
\def\G{\mathrm{G}}
\def\H{\mathcal{H}}
\def\SO{\mathrm{SO}}
\def\End{\mathrm{End}}
\def\vol{\mathrm{vol}}
\def\Ker{\mathrm{Ker}}
\def\Sym{\mathrm{Sym}}
\def\scal{\mathrm{scal}}
\def\Id{\mathrm{id}}
\def\T{T}
\def\grad{\mathrm{grad}}
\def \t5{\frac{1}{\sqrt{5}}}
\def\g{\mathfrak{g}}
\DeclareMathOperator{\dH}{\d_{\mathcal{H}}}
\DeclareMathOperator{\di}{d}
\DeclareMathOperator{\spa}{span}
\DeclareMathOperator{\bbC}{\mathbb{C}}
\DeclareMathOperator{\p}{p}
\DeclareMathOperator{\q}{\mathbf{cn}}
\DeclareMathOperator{\diff}{Diff}
\DeclareMathOperator{\Hh}{\mathscr{H}}
\DeclareMathOperator{\Vv}{\mathscr{V}}
\DeclareMathOperator{\Sp}{Sp}
\DeclareMathOperator{\re}{Re}
\DeclareMathOperator{\IM}{Im}
\title{Eigenvalue estimates for $3$-Sasaki structures}
\author{Paul-Andi Nagy and Uwe Semmelmann}
\address{Paul-Andi Nagy\\
Center for Complex Geometry \\
Institute for Basic Science(IBS)\\
55 Expo-ro, Yuseong-gu \\
34126 Daejeon, South Korea
}
\email{paulandin@ibs.re.kr}
\address{Uwe Semmelmann\\
Institut f\"ur Geometrie und Topologie \\
Fachbereich Mathematik\\
Universit{\"a}t Stuttgart\\
Pfaffenwaldring 57 \\
70569 Stuttgart, Germany
}
\email{uwe.semmelmann@mathematik.uni-stuttgart.de}
\date{\today}
\begin{document}

\bibliographystyle{plain}

\begin{abstract}
We obtain new lower bounds for the first non-zero eigenvalue of the scalar sub-Laplacian for $3$-Sasaki metrics, improving 
the Lichnerowicz-Obata type estimates by Ivanov et al. in \cite{I2,I}. The limiting eigenspace is fully described in terms of the automorphism algebra. 
Our results can be thought of as an analogue of the Lichnerowicz-Matsushima estimate for K\"ahler-Einstein metrics. In dimension $7$, if the automorphism algebra is non-vanishing, we also compute the second eigenvalue for the sub-Laplacian and construct explicit eigenfunctions. In addition, for all metrics in the canonical variation of the $3$-Sasaki metric we give a lower bound for the spectrum of the Riemannian Laplace operator, depending only on scalar curvature and dimension. We also strengthen a result pertaining to the growth rate of harmonic functions, due to Conlon, Hein\&Sun \cite{CoH,HS}, in the case of hyperk\"ahler  cones. In this setup we also describe the space of holomorphic functions.

\vs

\noindent
2020 {\it Mathematics Subject Classification}: Primary 53C25, 53C26, 58C40, 35H10.

\noindent{\it Keywords}: sub-Laplacian, $3$-Sasaki structure, eigenvalue estimates, gap theorems

\end{abstract}
\maketitle
\tableofcontents
%
%
\section{Introduction} \label{intro}
%
%
Consider a compact manifold $M^{4n+3}$ equipped with a $3$-Sasaki structure $(g,\xi)$ consisting of a Riemannian metric $g$ and a triple $\xi=(\xi_1,\xi_2,\xi_3)$ of associated Reeb vector fields. In particular, the distribution $\Vv:=\spa\{\xi_1,\xi_2,\xi_3\}$ is tangent to a Riemannian foliation with totally geodesic leaves. The bracket generating distribution $\Hh:=\Vv^{\perp}$ has a natural sub-Riemannian structure which allows considering the scalar sub-Laplacian 
$\Delta_{\Hh}:C^{\infty}M \to C^{\infty}M
$ (see also Section \ref{3sas-p}). 
According to general results in sub-Riemannian geometry 
this operator is self-adjoint, non-negative and hypoelliptic and thus shares some of the properties of elliptic operators, including that of having discrete spectrum. See \cite{Stri,Rot} which are based on H\"ormander's 
criterion from \cite{Ho}; alternatively these properties follow from having $\Delta_{\Hh}$ coincide with the sub-Laplacian defined in \cite{I2,I} for the quaternion-contact structure induced by $(g,\xi)$, see also 
Remark \ref{coin} in this paper. Whenever $D:C^{\infty}M \to C^{\infty}M$ is a self-adjoint, non-negative operator admitting 
a discrete spectrum we denote with $\lambda_1(D)>0$ its first non-zero eigenvalue. 

In our previous work \cite{NS} we have observed that in dimension $7$ specific estimates for $\Delta_{\Hh}$ govern the deformation theory
of the second Einstein metric in the canonical variation of $g$. It is the aim of the present paper to bring this type of estimates to optimal form, in arbitrary dimension.

Note that Lichnerowicz-Obata type results for $\lambda_1(\Delta_{\Hh})$ are 
known in several more general set-ups by work of Baudoin et al:  when $\Vv$ is tangent to a totally geodesic Riemannian foliation, see \cite{BKW1}; when $\Hh$ is a sub-Riemannian structure of $H$ type, see \cite{BK}.
Both results work under the assumption that the transversal Ricci curvature is positive. Here by Lichnerowicz-Obata type we mean that equality is satisfied if and only if the sub-Riemannian structure on $\Hh$ can be extended to a metric of constant sectional curvature. 
See \cite{Li} and references therein for similar results for the Kohn Laplacian in CR geometry.
 

To state our first main result let $\mathfrak{g}:=\{X \in \Gamma(TM) : \L_X \xi^a=0\}$, where $\xi^a:=g(\xi_a,\cdot)$, be 
the automorphism Lie algebra of the $3$-Sasaki  structure. This is equipped with a tri-moment map $\mu=(\mu_1,\mu_2,\mu_3):\mathfrak{g} \to C^{\infty}(M,\bbR^3)$,
see \cite{BoGa} or Section \ref{tri-sectn} of the paper for definitions and main properties.  
\begin{thm} \label{main1}
Let $(M^{4n+3},g,\xi)$ be a compact $3$-Sasaki manifold. If $g$ does not have constant sectional curvature  the scalar sub-Laplacian satisfies the lower bound
\begin{equation*}\lambda_1(\Delta_{\Hh}) \geq 8n.
\end{equation*}
In addition, the limiting eigenspace
$\ker(\Delta_{\Hh}-8n)$
is isomorphic to $\gg \oplus \gg \oplus \gg $
where the isomorphism is induced by $(X_1, X_2,X_3) \in \gg \oplus \gg \oplus \gg \mapsto \sum \limits_{a=1}^3\mu_a(X_a).$
\end{thm}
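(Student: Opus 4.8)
The plan is to exploit the geometry of the $3$-Sasaki structure by transferring the problem to the transversal (hyperk\"ahler/quaternion-K\"ahler) data and analyzing a Bochner-type identity for the sub-Laplacian. First I would recall the well-known commutation relations for the Reeb fields $\xi_a$ and the associated endomorphisms $\varphi_a$ of $\Hh$, which realize a quaternionic structure fibrewise; these give, for any $f\in C^\infty M$, a decomposition of $\nabla^{\Hh}\nabla^{\Hh} f$ into its symmetric trace-free part, its trace ($\propto \Delta_{\Hh}f$), and the "quaternionic" components measured against $\varphi_a$. The key computational input is a Bochner--Weitzenb\"ock formula for $\Delta_{\Hh}$ adapted to the foliation: since the transversal Ricci curvature of a $3$-Sasaki metric is a positive constant (equal to $(4n+8)$ in the normalization making the scalar curvature $2n(2n+1)\cdot(\dots)$, i.e. the transversal Einstein constant is explicit), integrating this identity over $M$ against $f$ yields
\begin{equation*}
\int_M \bigl|\nabla^{\Hh}_0\nabla^{\Hh}f\bigr|^2 + (\text{curvature term})\int_M|\nabla^{\Hh}f|^2 \;=\; c\int_M (\Delta_{\Hh}f)^2,
\end{equation*}
where $\nabla^{\Hh}_0$ is the appropriate trace- and $\varphi_a$-free part of the sub-Hessian and $c$ a dimensional constant; the constant is rigged so that $\lambda_1(\Delta_{\Hh})\ge 8n$ follows, with equality forcing $\nabla^{\Hh}_0\nabla^{\Hh}f=0$ for the extremal eigenfunctions.

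Next, assuming $f$ is an eigenfunction with $\Delta_{\Hh}f = 8n f$, the vanishing $\nabla^{\Hh}_0\nabla^{\Hh}f = 0$ says that the sub-Hessian of $f$ is pure trace plus quaternionic terms. I would then use the $\xi_a$-derivatives: the commutation relations express $\xi_a f$ and $\xi_a\xi_b f$ in terms of the $\varphi_a$-components of $\nabla^{\Hh}\nabla^{\Hh}f$, so the rigidity equation propagates to a closed first-order system for the triple $(f,\xi_1 f,\xi_2 f,\xi_3 f)$ along $M$. The claim is that this system is exactly the one characterizing components of tri-moment maps: given such an $f$, one shows the vector field $X_f$ dual (via the sub-Riemannian structure and the $\varphi_a$) to $\nabla^{\Hh}f$, suitably corrected by the $\xi_a$-directions, is an infinitesimal automorphism, i.e. $\L_{X_f}\xi^a = 0$, and that $f = \mu_a(X)$ for the corresponding $a$. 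Conversely, for $X\in\gg$ one checks directly from the definition of the tri-moment map and the $3$-Sasaki identities that each $\mu_a(X)$ satisfies $\Delta_{\Hh}\mu_a(X)=8n\,\mu_a(X)$; this direction is essentially a computation using $\L_X\xi^a=0$ and the structure equations, and gives the map $\gg^{\oplus 3}\to \ker(\Delta_{\Hh}-8n)$, $(X_1,X_2,X_3)\mapsto \sum_a \mu_a(X_a)$.

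Finally I would prove this map is an isomorphism. Surjectivity is the content of the rigidity analysis above: every extremal eigenfunction is $\sum_a\mu_a(X_a)$ for suitable $X_a\in\gg$. For injectivity one must show that $\sum_a \mu_a(X_a) = 0$ forces each $X_a=0$; here one applies the vertical derivatives $\xi_b$ to the relation and uses the transformation rule of $\mu$ under the $\xi_a$ (the tri-moment map components rotate into each other under the Reeb flow, $\xi_a\mu_b(X) = -\mu_c(X) + \delta_{ab}(\dots)$ with $(abc)$ cyclic, up to the standard normalization), which decouples the three summands; then $\mu_a(X_a)=0$ for all $a$ implies $X_a$ is tangent to $\Vv$ and a short argument with $\L_{X_a}\xi^b=0$ forces $X_a=0$ (the center of $\gg$ acts freely enough, or: a Killing field preserving all three contact forms and tangent to the leaves must vanish since the leaves are $3$-spheres of fixed size and the induced field would be left-invariant with zero moment map). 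The main obstacle I anticipate is pinning down the precise rigidity system from $\nabla^{\Hh}_0\nabla^{\Hh}f=0$ and verifying that its solutions are \emph{exactly} tri-moment map components with no extra solutions — equivalently, controlling the interplay between the horizontal Hessian rigidity and the vertical ODEs along the $3$-Sasaki leaves; getting the constants consistent across the normalizations of $g$, the transversal Einstein constant, and the definition of $\mu$ is where the bookkeeping is most delicate.
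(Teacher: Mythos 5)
Your first step contains a gap that the rest of the argument cannot recover from. You propose a single integrated Bochner--Weitzenb\"ock identity, valid for all $f\in C^{\infty}M$, whose curvature term is the (constant) transversal Ricci curvature and whose constants are ``rigged'' so that $\lambda_1(\Delta_{\Hh})\geq 8n$ follows directly. No such formula can exist: every $3$-Sasaki metric in dimension $4n+3$ has the same transversal Einstein constant (the quotient orbifold has $\scal=16n(n+2)$), so your inequality would apply verbatim to the round sphere $S^{4n+3}$, where $\lambda_1(\Delta_{\Hh})=4n<8n$ (attained on restrictions of linear functions, which lie in the weight-one space $C^{\infty}_1M$). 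The sharp bound obtainable from such a universal identity is $4n$ --- precisely the Lichnerowicz--Obata theorem of Ivanov et al.\ that Theorem \ref{main1} improves on. The hypothesis that $g$ does not have constant sectional curvature must therefore enter the eigenvalue estimate itself, not only the equality discussion, and your plan never uses it there. The paper instead decomposes $C^{\infty}M$ into the $\su(2)$-isotypic pieces $C^{\infty}_mM=\ker(\C-m(m+2))$ and proves the \emph{weight-dependent} bound $\Delta_{\Hh}\geq 4nm$ on $C^{\infty}_mM$ (via the operator $\p=\sum_aI_a\L_{\xi_a}$ and the identity $\di_{\Hh}^{\star}\p\,\di_{\Hh}=4n\C$), together with a second, refined bound $\Delta_{\Hh}\geq(n+1)(4m+8)$ on $C^{\infty}_mM\cap\ker(\Delta_{\Hh}-4nm)^{\perp}$ coming from a Cheeger--Tian type integral formula on the hyperk\"ahler cone applied to horizontal $1$-forms. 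The value $8n$ then emerges from a case analysis: $m=0$ gives $\geq 8(n+1)$; $m=1$ gives $\geq 12(n+1)$ once the Obata eigenvalue $4n$ is excluded by the curvature hypothesis; and $m\geq 2$ gives $\geq 4nm\geq 8n$ with equality possible only at $m=2$. A single tensorial decomposition of the sub-Hessian cannot detect this weight dependence.

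The second half of your plan (identifying the limiting eigenspace) is reasonable in outline but inherits the gap: without first locating $\ker(\Delta_{\Hh}-8n)$ inside $C^{\infty}_2M$ (so that $\C=8$ there and the eigenspace is a sum of adjoint representations $U_1$), the surjectivity argument has no starting point; in the paper it reduces to the Sasaki--Einstein fact that $\ker(\Delta_{B_a}-8(n+1))$ consists exactly of Killing potentials $\mu_a(X)$, $X\in\gg$. Two smaller corrections: $\mu_a(X)=g(\xi_a,X)=0$ does not imply $X$ is tangent to $\Vv$, and the transformation rule is $\L_{\xi_a}\mu_a(X)=0$, $\L_{\xi_a}\mu_b(X)=2\mu_c(X)$; injectivity of $\widetilde{\mu}$ then follows by applying $\L_{\xi_a}\L_{\xi_b}$ to $\sum_a\mu_a(X_a)=0$, which yields $\mu_b(X_a)=0$ for $a\neq b$, and invoking the injectivity of each $\mu_b$ on $\gg$.
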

Theorem \ref{main1} improves the Lichnerowicz-Obata-type lower bound $\Delta_{\mathscr{H}}\geq 4n$ proved by Ivanov et al in \cite{I2,I}. For, as showed in those works, the eigenspace $\ker(\Delta_{\mathscr{H}}-4n)$ vanishes unless $g$ has constant sectional curvature. Furthermore there is no shortage of structures 
$(g,\xi)$ with non-trivial automorphism algebra; in fact most known (non-homogeneous) examples are toric in the sense that $\mathfrak{g}$ contains the Lie algebra of a $(n+1)$-torus acting effectively on $M$. In such a situation we thus have 
$$
\lambda_1(\Delta_{\mathscr{H}})=8n.$$
\begin{rem} \label{rmk1-I}

\begin{itemize}
\item[(i)]
Theorem \ref{main1} is the $3$-Sasaki counterpart of a classical theorem of Lichnerowicz\&Matsushima which asserts that if $(Z,g_Z,J_Z)$ is K\"ahler Einstein with $\Ric^{g_Z}=Eg_{Z}$ and $E>0$ then 
$\Delta^{g_Z} \geq 2E$ on $\{f \in C^{\infty}Z : \int_Zf\vol=0\}$; the limiting eigenspace $\ker(\Delta^{g_Z}-2E)$ consists of Killing potentials and is thus isomorphic to $\mathfrak{aut}(Z,g)=\mathfrak{aut}(Z,J_Z)$ via $f \mapsto J_Z\grad\!f$.
\item[(ii)] The numerics in the lower bound in Theorem \ref{main1} differ from Lichnerowicz-Matsushima's estimate, for the $3$-Sasaki metric satisfies $\Ric^g=(4n+2)g$. 
\end{itemize}
\end{rem}
Remarkably, in dimension $7$, the techniques developed in this article also allow estimating the second eigenvalue of $\Delta_{\Hh}$ and investigate its geometric content.
\begin{thm} \label{main3}
Let $(M^{7},g,\xi)$ be a compact $3$-Sasaki manifold such that $g$ does not have constant sectional curvature and such that 
$\Aut(M,g)=\Aut(M,\xi) \times \SO(3)$. If $\gg \neq 0$, in addition to $\lambda_1(\Delta_{\Hh})=8$ we have 
$$ \lambda_2(\Delta_{\Hh})=16.
$$
The limiting eigenspace $\ker(\Delta_{\Hh}-16)$ contains the image of the map 
$$\Sym^2\gg \otimes \Sym^2_0\bbR^3 \to C^{\infty}M, \ \ (X \odot Y, \beta) \mapsto \beta(\mu(X),\mu(Y)).$$
\end{thm}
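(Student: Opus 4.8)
\textbf{Proof strategy for Theorem \ref{main3}.}

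The plan is to reduce everything to the representation-theoretic structure of functions on the $7$-dimensional $3$-Sasaki manifold, exploiting that the canonical variation and the $\SO(3)$-action organize $C^\infty M$ into isotypical pieces. First I would recall from the work establishing Theorem \ref{main1} the precise characterization of $\ker(\Delta_{\Hh}-8n)$ with $n=1$: it is exhausted by the tri-moment components $\mu_a(X)$, $X\in\gg$, and these functions satisfy rigid second-order identities coming from the automorphism condition $\L_X\xi^a=0$ together with the $3$-Sasaki integrability relations. The key computational input is to work out how $\Delta_{\Hh}$ acts on products $\mu_a(X)\mu_b(Y)$: using the sub-Laplacian product rule $\Delta_{\Hh}(fh)=f\Delta_{\Hh}h+h\Delta_{\Hh}f-2\langle\nabla^{\Hh}f,\nabla^{\Hh}h\rangle_{\Hh}$ and the fact that each factor is an $8$-eigenfunction, one gets $\Delta_{\Hh}(\mu_a(X)\mu_b(Y))=16\,\mu_a(X)\mu_b(Y)-2\langle\nabla^{\Hh}\mu_a(X),\nabla^{\Hh}\mu_b(Y)\rangle_{\Hh}$. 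So the claim $\lambda_2=16$ with this eigenspace amounts to showing that the ``horizontal carr\'e du champ'' term $\langle\nabla^{\Hh}\mu_a(X),\nabla^{\Hh}\mu_b(Y)\rangle_{\Hh}$, when symmetrized appropriately over $a,b$ and over $X,Y$, reassembles into a combination of the functions $\mu_c(Z)\mu_d(W)$ themselves --- i.e. the span of $\{\mu_a(X)\mu_b(Y)\}$ is $\Delta_{\Hh}$-invariant --- and then identifying which irreducible $\SO(3)$-summand carries eigenvalue exactly $16$.

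The mechanism for the last point is the $\SO(3)$-decomposition $\Sym^2(\bbR^3\otimes\bbR^3)$ restricted to the diagonal-type tensors: a product $\mu_a(X)\mu_b(Y)$ lives in $\Sym^2\gg\otimes(\bbR^3\otimes\bbR^3)$, and $\bbR^3\otimes\bbR^3=\bbR\oplus\Lambda^2\bbR^3\oplus\Sym^2_0\bbR^3$ as $\SO(3)$-modules, with $\Lambda^2\bbR^3\cong\bbR^3$. The trace part $\sum_a\mu_a(X)\mu_a(Y)$ should be expressible via the moment-map norm identity (on a $3$-Sasaki manifold $\sum_a\mu_a(X)^2$ is essentially controlled by $|X^{\Hh}|^2$ plus curvature terms, by the contact-moment-map formula) and hence lie in a lower stratum, while the $\Lambda^2$ part $\mu_a(X)\mu_b(Y)-\mu_b(X)\mu_a(Y)$ should collapse using the $\su(2)$-relations among the $\mu_a$ (the tri-moment map intertwines the adjoint action with the $\SO(3)$-action, so antisymmetrizing produces $\mu_c$ of a bracket, times a linear factor, landing back in the $8$-eigenspace after differentiation). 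That isolates $\Sym^2\gg\otimes\Sym^2_0\bbR^3$ as the genuinely new, top piece, and a direct evaluation of the carr\'e du champ on this piece --- this is where the hypothesis $\Aut(M,g)=\Aut(M,\xi)\times\SO(3)$ and dimension $7$ enter, forcing the transversal geometry to be quaternion-K\"ahler of dimension $4$, i.e. self-dual Einstein, so that the relevant Weitzenb\"ock/curvature constants are pinned down --- should give precisely the factor making the eigenvalue $16$ rather than something larger.

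To then get $\lambda_2(\Delta_{\Hh})=16$ as an equality (not merely that $16$ is an eigenvalue $>8$), I would argue that there is no eigenvalue strictly between $8$ and $16$. This is the part I expect to be the main obstacle. The natural route is a gap argument in the spirit of the proof of Theorem \ref{main1}: feed a hypothetical eigenfunction $f$ with $\Delta_{\Hh}f=\lambda f$, $8<\lambda<16$, into the Weitzenb\"ock-type integral identity (the sub-Riemannian Bochner formula with the transversal Ricci term, together with the ``vertical Bochner'' contribution that is special to the $3$-Sasaki case and was the engine behind improving $4n$ to $8n$), and show the resulting quadratic form is definite unless $\lambda\in\{8,16\}$. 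Concretely one would split $f$ according to its behavior under the vertical $\su(2)$: the Reeb vector fields act on eigenfunctions, and the Casimir of this $\su(2)$-action interacts with $\Delta_{\Hh}$ through the identity relating $\Delta_{\Hh}$, the full Laplacian $\Delta^g$, and $\Cas_{\su(2)}$; on each vertical isotype the spectrum of $\Delta_{\Hh}$ is shifted by an explicit constant, and one checks that the first two values are $8$ and $16$ with nothing in between. The harmonic-analysis bookkeeping here --- controlling all vertical types simultaneously and ruling out the intermediate range on each --- is the delicate step; everything else is either a direct consequence of Theorem \ref{main1} or a finite $\SO(3)$-representation computation.
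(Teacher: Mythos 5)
Your overall architecture (decompose $C^{\infty}M$ into the vertical $\su(2)$-isotypes $C_m^{\infty}M$, use $\Delta^g=\Delta_{\Hh}+\C$ together with the estimates behind Theorem \ref{main1} on each isotype, and produce eigenfunctions from products of moment-map components) matches the paper's, but there is a gap that would sink the argument as written: you misidentify what the hypothesis $\Aut(M,g)=\Aut(M,\xi)\times\SO(3)$ is for. It is not needed to ``pin down the Weitzenb\"ock constants of the transversal geometry'' --- in dimension $7$ the leaf space is automatically a $4$-dimensional quaternion K\"ahler (self-dual Einstein) orbifold, with or without this hypothesis. Its actual role is purely representation-theoretic: the $\su(2)$-representations $V_l$ with Casimir $m(m+2)$, $m$ odd, do not descend to $\SO(3)$, hence $C_m^{\infty}M=0$ for all odd $m$. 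Without this vanishing your isotype-by-isotype bookkeeping does not close: on $C_3^{\infty}M$ Proposition \ref{est-f1} only gives $\Delta_{\Hh}\geq 4nm=12$, with possible equality on $\ker(\Delta_{\Hh}-12)\cap C_3^{\infty}M$, i.e.\ a potential eigenvalue $12\in(8,16)$ that nothing in your proposal excludes (the paper does not rule this sector out in the $\Sp(1)$ case; compare Theorem \ref{gap}, (iii) and Remark \ref{tau=3}). Once the odd isotypes are killed, the gap argument is as you describe: $\Delta_B>16$ on nonconstant basic functions (Corollary \ref{basic1}, strict since $g$ is not of constant curvature), $\Delta_{\Hh}\geq 32$ on $C_2^{\infty}M\cap\ker(\Delta_{\Hh}-8)^{\perp}$ by Proposition \ref{est2}, and $\Delta_{\Hh}\geq 4m\geq 16$ on $C_m^{\infty}M$ for $m\geq 4$.

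Second, the eigenfunction computation is left at the level of ``a direct evaluation should give precisely the factor'', and the route you sketch (control $\langle\dH\mu_a(X),\dH\mu_b(Y)\rangle$ for arbitrary $a,b,X,Y$ and show that the whole span of products is $\Delta_{\Hh}$-invariant) is both unproved and harder than necessary. The paper's mechanism is: for a single $X\in\gg$ the identity $I_1\dH\mu_1(X)=I_2\dH\mu_2(X)=I_3\dH\mu_3(X)$ from \eqref{inf2} makes the three horizontal gradients mutually orthogonal with equal norms, so in the chain rule for $\Delta_{\Hh}P(\mu(X))$ the carr\'e du champ term contributes only through $\Delta_{\bbR^3}P$, which vanishes for $P\in\mathcal{H}_2(\bbR^3)\cong\Sym^2_0\bbR^3$; combined with $\Delta_{\Hh}\mu_a(X)=8\mu_a(X)$ and $\partial_RP=2P$ this yields $\Delta_{\Hh}P(\mu(X))=16\,P(\mu(X))$ (Proposition \ref{poly}). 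The general element $\beta(\mu(X),\mu(Y))$ then follows by polarisation in the $\gg$-variable, not by a separate computation; your heuristics about the trace and $\Lambda^2\bbR^3$ parts are not needed, since the statement only claims containment of the $\Sym^2_0$ image. Finally, nonvanishing of that image (so that $16$ is genuinely attained when $\gg\neq 0$) requires the explicit $L^2$-orthogonality check of Example \ref{ex1}, which your proposal does not address.
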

In Section \ref{tri-sectn} we show that this map is not zero on many sample elements. To explain the assumptions in Theorem  \ref{main3} recall that if $g$ does not have constant sectional curvature its isometry group enters the well known dichotomy 
\begin{equation} \label{dich}
\Aut(M,g)=\Aut(M,\xi) \times \Sp(1) \ \ \mbox{or} \ \ \Aut(M,g)=\Aut(M,\xi) \times \SO(3)
\end{equation}
where in each case the $\Sp(1)$ respectively $\SO(3)$ action is tangent to $\Vv$. Here the group of $3$-Sasaki automorphisms $\Aut(M,\xi):=\{f \in \diff(M) : f^{\star}\xi^a=\xi^a, 1 \leq a \leq 3\}$. In dimension $7$ both instances occur, with toric examples \cite{vanC2}. However, the generic case is when  $\SO(3) \subseteq \Aut(M,g)$ since many smooth examples are obtained by the Konishi construction for ASD, Einstein orbifolds, see \cite[Theorem 3.3.4]{BoGa1}.

The techniques used to establish Theorem \ref{main1} also yield the lower bound 
$$\lambda_1(\Delta^{g_T}) \geq 8(n+T^{-2})$$
where $\Delta^{g_T}$ is the scalar Riemannian Laplacian for the canonical variation $g_T, T>0$ of the $3$-Sasaki metric $g$, see Section \ref{crmk}. We believe this is new for $T \neq 1$; when $T=1$ the estimate can be derived, after a few additional arguments,
from \cite{CoH,HS}. Explaining how this works has led us to an improvement of the gap theorem in those references for the case of hyperk\"ahler cones. We also describe holomorphic functions on hyperk\"ahler cones. See Theorem \ref{gap} and Proposition \ref{hol-cone} in the paper. 
\subsection{Outline of the paper} \label{outline}
After describing some preliminary material on $3$-Sasaki structures and cone geometry we develop in 
Section \ref{prelm} the hyperk\"ahler cone version of an integral formula originally proved by Cheeger-Tian \cite{CT} for K\"ahler cones. This allows proving an estimate for a perturbation of the sub-Laplacian acting on horizontal $1$-forms, which has the advantage of being $\su(2)$ invariant. The $\su(2)$-representation theory on function spaces is brought into play in Section \ref{fctns} and used to show 
$\Delta_{\Hh}\geq 4nm$ on the weighted spaces $C_m^{\infty}M, m \in \mathbb{N}$ defined in Section \ref{lb1}. 
Secondly, we establish the refined estimate $\Delta_{\Hh} \geq (n+1)(4m+8)$ on $C^{\infty}_mM \cap \ker(\Delta_{\Hh}-4nm)^{\perp}$ where $m \in \mathbb{N}$. This is proved in Proposition \ref{est2} by using the integral formula mentioned above. 
Combining these estimates proves the overall lower bound for $\lambda_1(\Delta_{\Hh})$ in Theorem \ref{main1}. To determine the corresponding limiting space we first recall the well-known description of structure preserving Killing fields on Sasaki-Einstein structures in terms of their Killing potentials. The result is then proved in Section \ref{tri-sectn} via 
$\su(2)$ representation theoretical arguments. Theorem \ref{main3} is proved by combining again the above estimates for the sub-Laplacian and the following extra ingredients: when 
$\SO(3)\subseteq \Aut(M,g)$ the weighted spaces
$C_m^{\infty}M$ vanish when $m \in 2\mathbb{N}+1$; in dimension $7$, the first non-zero eigenvalue of the scalar basic Laplacian $\Delta_B:={\Delta_{\Hh}}_{\vert C_0^{\infty}M}$ satisfies  
$\lambda_1(\Delta_B)>16$, see \cite{LeeR} or Corollary \ref{basic1} in the paper. Section \ref{sectn4} deals with several function theoretic aspects 
of the hyperk\"ahler cone $(CM:=\bbR^{+} \times M, (\di\!r)^2+r^2g)$. In Theorem \ref{gap} we analyse the growth rate of homogeneous harmonic functions on the cone. The section ends with a description of holomorphic functions on the cone which gives a geometric interpretation of the limiting space $\ker(\Delta_{\Hh}-4nm) \cap C_m^{\infty}M$.

$\\$
{\bf{Acknowledgements:}} This research has been financially supported by the Special Priority Program
SPP 2026 `Geometry at Infinity' funded by the DFG. It is a pleasure to thank Craig van Coevering for many useful email exchanges and the reviewer for suggestions on how to improve presentation.
\section{Estimates on horizontal $1$-forms} \label{prelm}
\subsection{The $\su(2)$ action and horizontal operators in $3$-Sasaki geometry} \label{3sas-p}
%
%
First we recall some facts from $3$-Sasaki geometry which will be needed in what follows. 
Let $(M^{4n+3}, g)$ be a compact Riemannian manifold with a 3-Sasaki structure defined by a triple of  Killing vector fields $\xi=(\xi_1, \xi_2, \xi_3)$ satisfying $g(\xi_a,\xi_b)=\delta_{ab}$ 
and the $\su(2)$ commutator relations
\begin{equation} \label{su2}
\begin{split}
& [\xi_1,\xi_2]=2\xi_3, \ [\xi_2,\xi_3]=2\xi_1, \ [\xi_3,\xi_1]=2\xi_2. 
\end{split}
\end{equation}
The distributions $\Vv:=\spa \{\xi_1,\xi_2,\xi_3\}$ respectively $\Hh:=\Vv^{\perp}$ will be referred to as the vertical respectively the horizontal distributions. The vertical distribution 
induces a Riemannian foliation with totally geodesic leaves, denoted with 
$\mathcal{F}$, such that the leaf space $Q:=M\slash \mathcal{F}$ has the structure of a compact $4n$-dimensional quaternionic K\"ahler 
orbifold. The $3$-Sasaki metric is automatically Einstein with $\Ric^g = (4n+2)g$. The orbifold metric on $Q$ has scalar curvature $16n(n+2)$.

Throughout this paper the space of horizontal differential forms will be indicated with 
$\Omega^{\star}\Hh:=\{ \alpha \in \Omega^{\star}M : \xi_a \lrcorner \alpha=0 \ \mathrm{for} \ a=1,2,3\}$.

The differential geometric properties of $g$ are encoded in the structure equations for the coframe $\xi^a:=g(\xi_a, \cdot), a=1,2,3$ which read 
\begin{equation} \label{str-xi}
\di\!\xi^a=-2\xi^{bc}+2\omega_a
\end{equation}
with cyclic permutations on $abc$, where $\omega_1,\omega_2,\omega_3$ belong to $\Omega^2\!\Hh$. Here $\xi^{bc}=\xi^b \wedge \xi^c$ in shorthand notation. The triple $\{\omega_a, 1 \leq a \leq 3\}$ is algebraically constrained in the sense that 
$\omega_a=\omega_b(I_c \cdot, \cdot)$
with cyclic permutation on $abc$ where the almost complex structures $I_a:\Hh \to \Hh$ satisfy the quaternion relations 
$I_a \circ I_b=-I_b \circ I_a=I_c$.
The restriction $g_{\Hh}$ of the metric $g$ to $\Hh$ is thus recovered from 
$-\omega_a=g_{\Hh}(I_a \cdot, \cdot)$
with $1 \leq a \leq 3$. As a matter of convention, in the rest of the paper we let the almost complex structures $I_a$ act on $\Omega^1\!\Hh$ by composition, $I_a\alpha:=\alpha \circ I_a$. For proofs and more details the reader is referred  to \cite{BoGa}.

Next we describe some of the properties of the representation of $\su(2)$ on $\Omega^{\star}M$ given by the Lie derivatives $\L_{\xi_a}$ via the mapping 
$A_a \mapsto \L_{\xi_a}$, where $\{A_a, a=1,2,3\}$  is the  basis of $\su(2)$ given by
$$
A_1 = 
\begin{pmatrix*}
0 \; &  0 \; &  0 \\
0 \; &  0 \; &  \hbox to0pt{\hss$-$} 2\\
0 \; & 2 \; &  0
\end{pmatrix*}
\quad
A_2 = 
\begin{pmatrix*}
\;\;0                                  & 0  \;  &  2 \;    \\
\;\;0                                  & 0  \; &   0   \;   \\
\;\; \hbox to0pt{\hss$-$}2  & 0  \; &   0  \;
\end{pmatrix*}
\quad
A_3 = 
\begin{pmatrix*}
\;0  \;  &  \hbox to0pt{\hss$-$}2  &0 \;\\
\;2  \;  &                                    0&0 \;\\
\;0  \; &   0                                  &0\;
\end{pmatrix*}.
$$
Since $\xi_a$ are Killing vector fields preserving $\Hh$ we have $\L_{\xi_a}^{\star}=-\L_{\xi_a}$ on $\Omega^{\star}M$ respectively on $\Omega^{\star}\!\Hh$. 
Therefore the $\su(2)$-representation on $\Omega^{\star}M$ is orthogonal w.r.t the $L^2$-inner product induced by $g$ and preserves $\Omega^{\star}\!\Hh$.  The Casimir operator of the induced representation 
$\su(2) \times \Omega^{\star}\!\Hh \to \Omega^{\star}\!\Hh$ is defined as 
$$
\C:=-\sum \limits_{a}^{} \L^2_{\xi_a} :\Omega^{\star}\!\Hh \to \Omega^{\star}\!\Hh.
$$
This differs by a factor of $\tfrac18$ from the usual Lie theoretic definition involving the Killing form of $\su(2)$. The operator $\C$ is self-adjoint, 
non-negative and $\su(2)$-invariant. Yet another operator of relevance here is 
\begin{equation*}
\p:=\sum_a I_a \circ \L_{\xi_a} : \Omega^1\!\Hh \to \Omega^1\!\Hh.
\end{equation*}

It is clearly symmetric w.r.t. the $L^2$-inner product and enters the following
\begin{lemma} \label{l1s2}
The following hold on $\Omega^1\!\Hh$
\begin{itemize}
\item[(i)] $\p$ is $\su(2)$ invariant i.e. $[\p,\L_{\xi_a}]=0$
\item[(ii)] $\p \circ I_a+I_a \circ \p=4I_a-2\L_{\xi_a}$
\item[(iii)] $\p^2-2\!\p=\C$
\item[(iv)] $[\C-2\!\p,I_a]=0$.
\end{itemize}
\end{lemma}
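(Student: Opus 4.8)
The plan is to prove (i)--(iv) in order, since each statement feeds the next. For (i), the key observation is that $\L_{\xi_a}$ acts on $\Omega^1\Hh$ as an infinitesimal generator of the $\su(2)$-representation, and $\p = \sum_b I_b\circ\L_{\xi_b}$ is built from this representation together with the almost complex structures $I_b$. I would compute $[\p,\L_{\xi_a}] = \sum_b [I_b,\L_{\xi_a}]\circ\L_{\xi_b} + \sum_b I_b\circ[\L_{\xi_b},\L_{\xi_a}]$. The second sum is handled by the $\su(2)$ bracket relations \eqref{su2}, which (after lifting to Lie derivatives) give $[\L_{\xi_b},\L_{\xi_a}] = -2\L_{\xi_c}$ with the appropriate cyclic sign. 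For the first sum I need the commutator $[I_b,\L_{\xi_a}]$ acting on horizontal $1$-forms; differentiating the defining relation $-\omega_b = g_\Hh(I_b\cdot,\cdot)$ along $\xi_a$ and using that $\xi_a$ is Killing (so $\L_{\xi_a}g_\Hh=0$) reduces this to computing $\L_{\xi_a}\omega_b$. The latter follows from the structure equations \eqref{str-xi}: since $\di\xi^b = -2\xi^{ca}+2\omega_b$, applying $\L_{\xi_a}=\di\circ\iota_{\xi_a}+\iota_{\xi_a}\circ\di$ and the commutator relations yields $\L_{\xi_a}\omega_b$ as a combination of the $\omega$'s and $\xi$-terms, which translates into $[I_b,\L_{\xi_a}] = $ a cyclic expression in the $I_c$'s. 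Assembling both sums, the cross-terms should cancel and leave $[\p,\L_{\xi_a}]=0$.

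For (ii), I would insert the definition $\p = \sum_b I_b\L_{\xi_b}$ into $\p\circ I_a + I_a\circ\p$ and split the sum into the $b=a$ term and the $b\neq a$ terms. The $b=a$ term contributes $I_a\L_{\xi_a}I_a + I_a I_a\L_{\xi_a} = I_a\L_{\xi_a}I_a - \L_{\xi_a}$ (using $I_a^2=-\Id$), and $I_a\L_{\xi_a}I_a = I_a(I_a\L_{\xi_a} + [\L_{\xi_a},I_a])I_a$, again needing the commutator $[\L_{\xi_a},I_a]$ from the computation in part (i) — here it should vanish or be small since $\xi_a$ preserves its \emph{own} complex structure $I_a$ (this is the statement that $\L_{\xi_a}I_a=0$, coming from $\L_{\xi_a}\omega_a=0$, which one reads off \eqref{str-xi}). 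The $b\neq a$ terms pair up: using $I_bI_a = -I_aI_b = \pm I_c$ and the commutator $[I_b,\L_{\xi_a}]$, the two such terms for $\{b,c\}=\{1,2,3\}\setminus\{a\}$ combine to produce the $4I_a$ and the residual $-2\L_{\xi_a}$. This is the most calculation-heavy part, but it is purely algebraic once the commutator $[I_b,\L_{\xi_a}]$ is known.

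For (iii), I would compute $\p^2 = \sum_{a,b} I_a\L_{\xi_a}I_b\L_{\xi_b} = \sum_{a,b}I_a(I_b\L_{\xi_a} + [\L_{\xi_a},I_b])\L_{\xi_b}$. The diagonal part $\sum_a I_aI_a\L_{\xi_a}^2 = -\sum_a\L_{\xi_a}^2 = \C$. The off-diagonal part $\sum_{a\neq b}I_aI_b\L_{\xi_a}\L_{\xi_b}$ can be antisymmetrized: writing $\L_{\xi_a}\L_{\xi_b} = \tfrac12[\L_{\xi_a},\L_{\xi_b}] + \tfrac12\{\L_{\xi_a},\L_{\xi_b}\}$, the symmetric part dies against the antisymmetry of $I_aI_b$ in $a,b$ (for $a\neq b$), and the bracket part $\tfrac12 I_aI_b[\L_{\xi_a},\L_{\xi_b}]$ evaluates via \eqref{su2} to something proportional to $\sum_c I_c\L_{\xi_c} = \p$; carefully tracking the constant should give exactly $2\p$. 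The leftover terms from the commutators $[\L_{\xi_a},I_b]$ combine with these to close the identity $\p^2 = \C + 2\p$, i.e. $\p^2 - 2\p = \C$. Finally (iv) is a formal consequence: from (ii), $[\p, I_a] = \p I_a - I_a\p = (4I_a - 2\L_{\xi_a}) - 2I_a\p$, and one computes $[\p^2,I_a] = \p[\p,I_a]+[\p,I_a]\p$; substituting and using (i) (so $[\L_{\xi_a},\p]=0$) and (ii) repeatedly, the $\p$-linear and $\p$-quadratic pieces rearrange so that $[\C,I_a] = [\p^2-2\p,I_a] = 2[\p,I_a]$, hence $[\C - 2\p, I_a] = [\C,I_a] - 2[\p,I_a] = 0$.

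The main obstacle I anticipate is \textbf{pinning down the commutator $[I_b,\L_{\xi_a}]$ on $\Omega^1\Hh$ with the correct signs and constants}, since everything in (i)--(iv) rests on it; the cleanest route is to derive $\L_{\xi_a}\omega_b$ directly from the structure equations \eqref{str-xi} via Cartan's formula and the relations \eqref{su2}, then convert to a statement about $I_b$ using $-\omega_b = g_\Hh(I_b\cdot,\cdot)$ and $\L_{\xi_a}g_\Hh = 0$. Once that lemma is in hand, (i)--(iv) are bookkeeping with the quaternion relations $I_aI_b = -I_bI_a = I_c$ and the $\su(2)$ brackets.
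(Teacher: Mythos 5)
Your proposal is correct and follows essentially the same route as the paper: derive the $\su(2)$ action on the triple $I_1,I_2,I_3$, namely $\L_{\xi_a}I_b=-\L_{\xi_b}I_a=2I_c$ and $\L_{\xi_a}I_a=0$, from the structure equations \eqref{str-xi} via Cartan's formula, and then obtain (i)--(iv) by purely algebraic manipulation of this commutator together with the relations \eqref{su2} and the quaternion identities (the paper delegates exactly this algebra to Lemmas 3.3 and 5.1 of \cite{NS}). The only cosmetic slip is a stray extra factor of $I_a$ in your expansion of the $b=a$ term in (ii); the term correctly contributes $-2\L_{\xi_a}$, and the rest of your bookkeeping (including deducing (iv) formally from (i)--(iii)) checks out.
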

\begin{proof}
Differentiating the structure equation  \eqref{str-xi} we obtain 
$\di\!\omega_a  =  2 \omega_b \wedge \xi^c - 2 \omega_c \wedge \xi^b$, 
in particular $\L_{\xi_a}\omega_b=-\L_{\xi_b}\omega_a=2\omega_c$ after using Cartan's formula. It follows that the Lie algebra $\su(2)$ acts 
on the triple $I_1,I_2,I_3$ according to 
\begin{equation} \label{Lie-osI}
\L_{\xi_a}I_b=-\L_{\xi_b}I_a=2I_c
\end{equation}
on $\Omega^1\!\Hh$. Together with \eqref{su2} this leads to the proof of all claims, after a few purely algebraic computations. See also Lemma 3.3 and Lemma 5.1 in \cite{NS} for detailed proofs in dimension $7$ which entirely carry through to arbitrary dimension.
\end{proof}

The horizontal differential operators of interest in this paper are mostly build around the horizontal exterior derivative which is defined according to 
$$ \d_{\Hh}:\Omega^{\star}\!\Hh \to \Omega^{\star+1}\!\Hh, \ \alpha \mapsto (\di\!\alpha)_{\Hh}$$
where the subscript above indicates projection onto  $\Omega^{\star}\!\Hh$ with respect to the splitting $\Omega^{\star}M=\Omega^{\star}\!\Vv \wedge \Omega^{\star}\!\Hh$. The latter notation is short for saying that 
$\Omega^{\star}M$ is spanned by elements of the form $\alpha \wedge \beta$ with $\alpha$ in $\Omega^{\star}\Vv$ respectively $\beta$ in $\Omega^{\star}\Hh$.

Cartan's formula shows that $\dH$ relates to the ordinary exterior derivative via 
\begin{equation} \label{Cart}
\di=\d_{\Hh}+\sum_{a} \xi^a \wedge \L_{\xi_a}.
\end{equation}
Note that the operators $\L_{\xi_a}$ preserve $\Omega^{\star}\!\Hh$ as $\Vv$ is totally geodesic. An important property of $\dH$ is its $\su(2)$-invariance, i.e.
$[\d_{\Hh},\L_{\xi_a}]=0$ which descends from the $\su(2)$ invariance of $\di$. 
The formal adjoint $\di_{\Hh}^{\star}:\Omega^{\star}\!\Hh \to \Omega^{\star-1}\!\Hh$ of $\d_{\Hh}$, computed w.r.t. the metric induced by $g_{\Hh}$, 
is also $\su(2)$-invariant i.e.  $[\di_{\Hh}^{\star},\L_{\xi_a}]=0$. 
These operators allow building the horizontal (or sub) Laplacian according to 
$$
\Delta_{\Hh}:=\di_{\Hh}\di_{\Hh}^{\star}+\di_{\Hh}^{\star}\di_{\Hh} : \Omega^{\star}\!\Hh \to \Omega^{\star}\!\Hh.
$$
This is self-adjoint, non-negative and hypoelliptic, in particular its spectrum is discrete. See e.g. 
\cite{BG} for a proof, based on H\"ormander's Theorem from \cite{Ho}, as well as Remark 
\ref{coin} below.
\begin{lemma} \label{s2l1}We have 
\begin{itemize}
\item[(i)]$\di^{\star}\!=\di_{\Hh}^{\star}$ on $\Omega^1\!\Hh$
\item[(ii)] $(\Delta^g \alpha)_{\Hh}= (\Delta_{\Hh} + \C)\alpha$ for $\alpha \in \Omega^1\!\Hh$
\item[(iii)]  $\Delta^g= \Delta_{\Hh} + \C $ on $C^{\infty}M$.
\end{itemize}
\end{lemma}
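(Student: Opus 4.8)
The plan is to deduce all three identities from the Cartan-type decomposition \eqref{Cart}, that is $\di=\di_{\Hh}+\sum_a\xi^a\wedge\L_{\xi_a}$, together with three facts already recorded above: the bidegree splitting $\Omega^{\star}M=\Omega^{\star}\Vv\wedge\Omega^{\star}\Hh$ is pointwise $g$-orthogonal; the $\xi_a$ are Killing, so $\di^{\star}\xi^a=0$; and $\L_{\xi_a}^{\star}=-\L_{\xi_a}$ on $\Omega^{\star}M$, while $\di_{\Hh}^{\star}$ is by construction the $L^2$-adjoint of $\di_{\Hh}$. For (i) I would fix $\alpha\in\Omega^1\Hh$ and test against an arbitrary $f\in C^{\infty}M$: by \eqref{Cart} we have $\di f=\di_{\Hh}f+\sum_a(\xi_a f)\xi^a$, and since the difference $\sum_a(\xi_a f)\xi^a\in\Omega^1\Vv$ is pointwise $g$-orthogonal to the horizontal form $\alpha$, we get $\langle\alpha,\di f\rangle=\langle\alpha,\di_{\Hh}f\rangle$ everywhere. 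Integrating against $\vol$ gives $\int_M(\di^{\star}\alpha)\,f\,\vol=\int_M(\di_{\Hh}^{\star}\alpha)\,f\,\vol$, and varying $f$ forces $\di^{\star}\alpha=\di_{\Hh}^{\star}\alpha$.

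For (iii) I would apply $\di^{\star}$ to $\di f=\di_{\Hh}f+\sum_a(\xi_a f)\xi^a$. The first term yields $\di^{\star}\di_{\Hh}f=\di_{\Hh}^{\star}\di_{\Hh}f=\Delta_{\Hh}f$ by (i), using that on functions $\Delta_{\Hh}=\di_{\Hh}^{\star}\di_{\Hh}$. For the second term, the Leibniz rule $\di^{\star}(h\beta)=h\,\di^{\star}\beta-\langle\di h,\beta\rangle$ for a function $h$ and a $1$-form $\beta$, together with $\di^{\star}\xi^a=0$ and $\langle\di(\xi_a f),\xi^a\rangle=\xi_a(\xi_a f)$, gives $\di^{\star}\big((\xi_a f)\xi^a\big)=-\xi_a(\xi_a f)$. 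Summing over $a$ and using $\L_{\xi_a}f=\xi_a f$ turns this into $-\sum_a\L_{\xi_a}^2f=\C f$, so $\Delta^g f=(\Delta_{\Hh}+\C)f$.

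For (ii) I would argue weakly, so as not to have to unravel $\di^{\star}$ on $2$-forms. For $\alpha,\gamma\in\Omega^1\Hh$, the definition $\Delta^g=\di\di^{\star}+\di^{\star}\di$ and adjointness give $\int_M\langle\Delta^g\alpha,\gamma\rangle\,\vol=\int_M\langle\di^{\star}\alpha,\di^{\star}\gamma\rangle\,\vol+\int_M\langle\di\alpha,\di\gamma\rangle\,\vol$. By (i) the first summand equals $\int_M\langle\di_{\Hh}\di_{\Hh}^{\star}\alpha,\gamma\rangle\,\vol$. For the second, \eqref{Cart} gives $\di\alpha=\di_{\Hh}\alpha+\sum_a\xi^a\wedge\L_{\xi_a}\alpha$ and likewise for $\gamma$; since $\di_{\Hh}\alpha\in\Omega^2\Hh$ while $\xi^a\wedge\L_{\xi_a}\alpha\in\Omega^1\Vv\wedge\Omega^1\Hh$, pointwise $g$-orthogonality of bidegrees together with $\langle\xi^a,\xi^b\rangle=\delta_{ab}$ and $\xi^a\perp\Omega^1\Hh$ yield $\langle\di\alpha,\di\gamma\rangle=\langle\di_{\Hh}\alpha,\di_{\Hh}\gamma\rangle+\sum_a\langle\L_{\xi_a}\alpha,\L_{\xi_a}\gamma\rangle$. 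Integrating and using $\L_{\xi_a}^{\star}=-\L_{\xi_a}$ rewrites the last sum as $\int_M\langle\C\alpha,\gamma\rangle\,\vol$, and the first term as $\int_M\langle\di_{\Hh}^{\star}\di_{\Hh}\alpha,\gamma\rangle\,\vol$. Altogether $\int_M\langle\Delta^g\alpha,\gamma\rangle\,\vol=\int_M\langle(\Delta_{\Hh}+\C)\alpha,\gamma\rangle\,\vol$ for every $\gamma\in\Omega^1\Hh$; since $\langle\Delta^g\alpha,\gamma\rangle=\langle(\Delta^g\alpha)_{\Hh},\gamma\rangle$ for horizontal $\gamma$ and both $(\Delta^g\alpha)_{\Hh}$ and $(\Delta_{\Hh}+\C)\alpha$ are smooth sections of $\Omega^1\Hh$, this identity of $L^2$-pairings forces the pointwise equality $(\Delta^g\alpha)_{\Hh}=(\Delta_{\Hh}+\C)\alpha$.

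The one step calling for care is (ii): a direct pointwise derivation would require tracking the $\Vv$-valued component of $\di^{\star}$ applied to a horizontal $2$-form, i.e. the second fundamental form of the foliation $\mathcal{F}$, whereas pairing against horizontal test forms makes the $\C$-term emerge cleanly from the skew-symmetry of the $\L_{\xi_a}$ and bypasses this. Parts (i) and (iii) are then routine consequences of \eqref{Cart} and the Killing property of the $\xi_a$.
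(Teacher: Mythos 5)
Your proposal is correct and follows exactly the route the paper indicates: all three identities are derived by $L^2$-orthogonality arguments from the decomposition $\di=\di_{\Hh}+\sum_a\xi^a\wedge\L_{\xi_a}$ in \eqref{Cart}, together with $\di^{\star}\xi^a=0$ and the skew-symmetry of the $\L_{\xi_a}$. You have merely written out the details that the paper delegates to \cite[Lemma 4.1]{NS}, and your weak formulation of (ii) is a clean way to avoid computing $\di^{\star}$ on mixed-bidegree $2$-forms.
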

\begin{proof}
All claims are proved by easy $L^2$-orthogonality arguments based on \eqref{Cart}. See also \cite[Lemma 4.1]{NS} for a proof of (ii) and (iii) in dimension $7$ which extends automatically to arbitrary dimension. 
\end{proof}
In the remark below we show that when acting on functions, the operator $\Delta_{\Hh}$ coincides with the sub-Laplacian defined in \cite{I2,I}. This provides a direct way to see that the spectrum of $\Delta_{\Hh}$ is discrete.
\begin{rem} \label{coin}
Let $\nabla $ be the Biquard connection of the quaternionic-contact structure induced by the $3$-Sasaki structure on $M$. By equation (2.19) in \cite{I2}, the horizontal 
divergence operator is given by $\nabla^{\star}\sigma=-\sum (\nabla_{e_i}\sigma)e_i$ whenever $\sigma \in \Omega^1\Hh$ and where $\{e_i\}$ is a local orthonormal frame in $\Hh$. Using the comparison formula (2.2) between 
$\nabla$ and $\nabla^g$ in \cite{I2} shows that 
\begin{equation*}
\nabla^{\star}\sigma=-\sum (\nabla^g_{e_i}\sigma)e_i-\sum g(T(e_i, \sigma^{\sharp}),e_i)
\end{equation*}
where $T$ denotes the torsion tensor of the Biquard connection. However $T(\Hh,\Hh) \subseteq \Vv$ by \cite{I2}[Theorem 2.1] so the second summand above vanishes. Since $\sigma$ is horizontal and $\Vv$ is totally geodesic the first summand equals $\di^{\star_g}\sigma$. We conclude that 
$$\nabla^{\star}\sigma=\di^{\star_g}\sigma=\di^{\star}_{\Hh}\sigma$$ by using Lemma \ref{s2l1}, (i) for the last equality. Now we recall that the scalar sub-Laplacian $\Delta$ as defined in \cite{I2}, equation (3.1), acts on functions $f \in C^{\infty}M$ according to $\Delta f=-\tr^g_{\Hh}\nabla^2f=\nabla^{\star}\nabla f$; here $\nabla f$ is the horizontal gradient of $f$, hence $\nabla f=\di_{\Hh}f$ in our notation. It follows that $\Delta=\Delta_{\Hh}$ 
on functions, as claimed.
\end{rem}
\subsection{Elements of cone geometry} \label{cone-g}
An equivalent way to say that the metric $g$ admits a $3$-Sasaki structure is to require the metric cone 
$(CM:=M \times \bbR_{+},g_{\q}:=r^2g+(\di\!r)^2)$ of $M$ be hyperk\"ahler, in which case 
the triple of complex structures on $CM$ is determined from 
\begin{equation} \label{alg-J}
\begin{split}
&J_a\partial_r=-r^{-1}\xi_a, \ J_a\xi_b=\xi_c, \ J_a=I_a \ \mbox{on} \ \Hh 
\end{split}
\end{equation}
with cyclic permutations on $abc$. The corresponding symplectic forms, defined according to the convention  $\Omega^a=g_{\q}(J_a\cdot, \cdot)$, read 
$\Omega^a=-\tfrac{1}{2}\di(r^2\xi^a)$.

In view of subsequent use we also recall the comparison formulas between the form Laplacian $\Delta^{g_{\q}}$ on the cone and the form Laplacian $\Delta^g$ on the link $M$. Denoting 
$$\Omega^{\star}_rM=\{\alpha \in \Omega^{\star}CM : \partial_r \lrcorner \alpha=0\}$$ we consider the decomposition 
\begin{equation} \label{spl-c}
\Omega^{1}CM=\Omega^{1}_rM\oplus (\di\!r \wedge \Omega^{0}M).
\end{equation} 
Accordingly, we identify $1$-forms $\alpha+f\di\!r \in \Omega^1CM$ to vectors $\left (\begin{array}{c}
\alpha\\
 f \\
\end{array} \right ) $  $\in \Omega^1_rM \oplus \Omega^0_rM$. 
Following \cite[page 586]{C} or \cite[(2.14), (2.15)]{CT} we recall that with respect to this identification the codifferential $\di^{\star_{g_{\q}}}:\Omega^1CM \to C^{\infty}CM$ respectively the form Laplacian $\Delta^{g_{\q}} : \Omega^1CM \to  \Omega^1CM$ of the cone metric $g_{\q}$ have the matrix form 
\begin{equation} \label{co-di}
\di^{\star_{g_{\q}}}=\left ( \begin{array}{ccc}
&r^{-2}\di^{\star} & -\partial_r-(4n+3)r^{-1} 
\end{array} \right )
\end{equation}
respectively
\begin{equation} \label{lapl-c}
\Delta^{g_{\q}}=\left (\hspace{-3mm}\begin{array}{ccc}
&r^{-2}\Delta^g-\partial_r^2-(4n+1)r^{-1}\partial_r & -2r^{-1}\di \\
&-2r^{-3}\di^{\star} & r^{-2}\Delta^g-\partial_r^2+(4n+3)(-r^{-1}\partial_r+r^{-2})
\end{array} \hspace{-2mm}\right )
\end{equation}
where $\partial_r\alpha:=\L_{\partial_r}\alpha$ for forms $\alpha \in \Omega^{\star}_rM$. Similarly, on functions in $C^{\infty}CM$ we have
\begin{equation} \label{lapl-c0}
\Delta^{g_{\q}}=r^{-2}\Delta^g-\partial_r^2-(4n+3)r^{-1}\partial_r.
\end{equation}
\subsection{An integral formula of Cheeger-Tian type} \label{op-id}
The aim in this section is to obtain a lower bound for an explicit perturbation of $\Delta_{\Hh}: \Omega^{1}\!\Hh \to \Omega^{1}\!\Hh$. This 
will be achieved by using the hyperk\"ahler geometry of the metric cone of $(M,g)$. 
First we recall the available results of this type, starting with an integral formula due to Jeff Cheeger and Gang Tian \cite{CT}. To explain the details in that formula we recall that $(g,\xi_a),$ with $a \in \{1,2,3\}$, defines a Sasaki-Einstein structure on $M$ with corresponding horizontal distribution 
given by $\H_a=\spa \{\xi_b,\xi_c\} \oplus \Hh$  and almost complex structure $\tilde{I}_a:\H_a \to \H_a$ which acts according to $\tilde{I}_a\xi_b=\xi_c$ and $\tilde{I}_a=I_a$ on $\Hh$. In addition, whenever $X \in \Gamma(\H_a)$ we indicate with $\L_X^{\H_a}J_a$ the component of 
the Lie derivative $\L_XJ_a$ on $\End(\H_a) \subseteq \End(\T CM)$, w.r.t. the splitting $TCM=\spa\{\partial_r,\xi_a\} \oplus \H_a$. 
\begin{propn} \label{est-D}
Letting $\gamma=g(X,\cdot) \in \Omega^1\H_a$ we have 
\begin{equation*}
\frac{1}{2}\int_M \vert \L_X^{\H_a}J_a \vert^2_{g_{\q}}\vol=\int_M g(\Delta^g\gamma+\L^{2}_{\xi_a}\gamma+4(n+1)\tilde{I}_a\L_{\xi_a}\gamma-8(n+1)\gamma, \gamma) \vol.
\end{equation*}
\end{propn}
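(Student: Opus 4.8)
The plan is to compute the $L^2$-norm on the left directly, using the fact that $(CM,g_{\q})$ is hyperk\"ahler. First I would express $\L^{\H_a}_XJ_a$ in terms of data on the link. Since $J_a$ is parallel on the cone, $\L_XJ_a = [\nabla^{\q}X, J_a]$ as an endomorphism, so $\L^{\H_a}_XJ_a$ is controlled by the second fundamental form of $\H_a$ together with the covariant derivative of $\gamma$ on the link. Concretely, one writes $X$ as a horizontal vector field on the cone (it has no $\partial_r$ or $\xi_a$ component since $\gamma\in\Omega^1\H_a$, extended $r$-independently, or with the appropriate homogeneity), and uses the O'Neill-type relations between $\nabla^{\q}$ and $\nabla^g$ together with the structure equations \eqref{alg-J}, \eqref{str-xi} to turn the pointwise norm $\vert \L^{\H_a}_XJ_a\vert^2_{g_{\q}}$ into a quadratic expression in $\nabla\gamma$, $\L_{\xi_a}\gamma$, $\tilde I_a\gamma$ and $\gamma$ itself, plus curvature terms coming from the quaternion-K\"ahler base. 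This is the Sasaki-Einstein specialisation of the Cheeger--Tian identity for K\"ahler cones, so the algebra is essentially that of \cite{CT} adapted to our normalisation $\Ric^g=(4n+2)g$.

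Next I would integrate over $M$ and integrate by parts. The term quadratic in $\nabla\gamma$ integrates, via the Bochner/Weitzenb\"ock formula on $(M,g)$, to $\int_M g(\nabla^{\star}\nabla\gamma,\gamma)\vol = \int_M g(\Delta^g\gamma,\gamma)\vol - \int_M \Ric^g(\gamma,\gamma)\vol$, and $\Ric^g = (4n+2)g$ feeds the constant $-8(n+1)$ appearing in the statement (after combining with the other curvature contributions). The cross term between $\nabla\gamma$ and $\L_{\xi_a}\gamma$ produces, after integration by parts and using that $\xi_a$ is Killing, the term $4(n+1)\tilde I_a\L_{\xi_a}\gamma$; here one uses the structure equation $\di\xi^a=-2\xi^{bc}+2\omega_a$ to see where the factor $4(n+1)$ comes from. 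The term purely in $\L_{\xi_a}\gamma$ gives $\L^2_{\xi_a}\gamma = -\L^{\star}_{\xi_a}\L_{\xi_a}\gamma$ under the integral. Collecting everything yields exactly the right-hand side $\int_M g\bigl(\Delta^g\gamma+\L^2_{\xi_a}\gamma+4(n+1)\tilde I_a\L_{\xi_a}\gamma-8(n+1)\gamma,\gamma\bigr)\vol$.

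An alternative, perhaps cleaner, route is to avoid re-deriving the cone identity from scratch: start from the Cheeger--Tian integral formula on the K\"ahler cone $(CM,g_{\q},J_a)$ exactly as stated in \cite{CT}, which expresses $\tfrac12\int \vert\L^{\H_a}_XJ_a\vert^2$ in terms of $\Delta^{g_{\q}}$ acting on $\gamma$ viewed as a form on the cone, and then descend to the link using the comparison formulas \eqref{co-di}, \eqref{lapl-c}, \eqref{lapl-c0}. One must be careful about the homogeneity degree under which $\gamma\in\Omega^1\H_a$ is extended to the cone (the natural choice makes $g(X,\cdot)$ a $1$-form of a fixed weight), track the $r$-powers through \eqref{lapl-c}, and use that the $\partial_r$-derivative of the extended object is determined by that weight, so that $\partial_r^2$ and $r^{-1}\partial_r$ terms collapse into constants; the off-diagonal $-2r^{-1}\di$ term interacts with the $\xi_a$-direction to generate the $\tilde I_a\L_{\xi_a}$ term. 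Either way, the main obstacle is bookkeeping: correctly matching the normalisations (the factor $8$ in $\Delta_{\Hh}\geq 8n$ ultimately traces back to these constants), keeping the splitting $TCM=\spa\{\partial_r,\xi_a\}\oplus\H_a$ consistent with the action of $J_a$ via \eqref{alg-J}, and making sure the curvature terms from the quaternion-K\"ahler base $Q$ (which has $\scal = 16n(n+2)$) combine with $\Ric^g$ to give precisely the coefficient $-8(n+1)$ and no leftover zeroth-order terms. Once the constants are pinned down, the identity is forced.
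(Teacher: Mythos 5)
Your second route is exactly the paper's proof: the proposition is obtained by quoting Equation 7A.48 of Cheeger--Tian for the Ricci-flat K\"ahler cone $(CM,g_{\q},J_a)$, with $k=2n$ in their notation and a sign flip on the $\tilde I_a\L_{\xi_a}$ term accounted for by the convention that $\tilde I_a$ acts on $1$-forms by composition. Note that (7A.48) is already an integral identity over the link involving $\Delta^g$, so no descent through the comparison formulas \eqref{co-di}--\eqref{lapl-c0} is actually needed; your first route amounts to re-deriving Cheeger--Tian's identity from scratch and, while feasible, is far more work than the paper undertakes (and the curvature input there is the Ricci-flatness of the cone together with $\Ric^g=(4n+2)g$, rather than curvature of the quaternion-K\"ahler base).
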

\begin{proof}
This is Equation 7A.48 in \cite[Corollary 7A.46]{CT}, used for the Ricci flat K\"ahler cone $(CM,g_{\q},J_a)$. Note that with respect to that reference $k=2n$ and the opposite sign on the term $\tilde{I}_a\L_{\xi_a}$ is explained by our convention for the action of $\tilde{I}_a$ on 
$\Omega^1\H_a$.
\end{proof}
In particular we obtain that  
the sub-Laplacian $\Delta_{\Hh}$ acting on $\Omega^1\!\Hh$ satisfies
\begin{equation} \label{prel-CTT}
\Delta_{\Hh}+4(n+1)I_a\L_{\xi_a}+2\L_{\xi_c}I_a\L_{\xi_b}-8(n+1)\geq 0
\end{equation} 
whenever $a \in \{1,2,3\}$, with cyclic permutations on $abc$.
For $a=1$ this is proved by integration after observing that $\vert \L_X^{\H_1}J_1 \vert^2_{g_{\q}}=\vert \L_X^{\Hh}J_1 \vert^2_{g_{\q}}+2 \vert (\L_XJ_1)\xi_2\vert^2$ where $\L_X^{\Hh}J_1$ is the component of $\L_XJ_1$ on $\End(\Hh) \subseteq \End(TCM)$.
However the operator in \eqref{prel-CTT} is not $\su(2)$-invariant, making it difficult to apply representation 
theoretical arguments. An estimate for an $\su(2)$-invariant perturbation of $\Delta_{\Hh}$ is obtained by summation 
in \eqref{prel-CTT} and reads 
\begin{equation*}
\Delta_{\Hh}+\frac{4n+6}{3}\p-8(n+1) \geq 0
\end{equation*}
on $\Omega^1\!\Hh$. The proof hinges on the algebraic identity $\p=\mathfrak{S}_{abc}\L_{\xi_c}I_a \L_{\xi_b}$ on $\Omega^1\Hh$ where 
$\mathfrak{S}_{abc}$ denotes cyclic summation on $abc$.

In order to be able to prove Theorem \ref{main1} the preliminary estimate for $\Delta_{\Hh}$ above must be considerably improved. To proceed in that direction we will use essentially the same cone technique as in \cite{CT} with two main differences: instead of the Lie derivative we use a suitable algebraic component of the exterior derivative on one forms. Secondly, the specific structure of the operators $\p$ and $\C$ must be taken into account. 

Consider the operator $\D: \Omega^1CM \to \Gamma(T^{\star}CM \otimes T^{\star}CM)$ given by 
$$ \D_U\alpha:=\nabla_U^{g_{\q}}\alpha+\sum_a \nabla_{J_aU}^{g_{\q}}J_a\alpha
$$
where $\nabla^{g_{\q}}$ indicates the Levi-Civita connection of $g_{\q}$. Here the endomorphisms $J_a$ act on 1-forms 
$\alpha \in \Omega^1CM$ by composition, $J_a\alpha:=\alpha \circ J_a$. The operator $\D$ features in the following Weitzenb\"ock-type formula which 
actually works on arbitrary hyperk\"ahler structures, i.e. not necessarily of cone type. 
\begin{lemma} \label{l23-ne} Letting $\alpha \in \Omega^1CM$ we have 
$$\frac{1}{4}\vert \D\alpha\vert^2_{g_{\q}}=g_{\q}(\Delta^{g_{\q}}\alpha,\alpha)-\frac{1}{2}\Delta^{g_{\q}}g_{\q}(\alpha,\alpha)+\sum_a \di^{\star_{g_{\q}}}\! \eta_a(\alpha)$$
where $\eta_a(\alpha) \in \Omega^1CM$ is defined according to $\eta_a(\alpha)(U):=g_{\q}(\nabla^{g_{\q}}_{J_aU}\alpha,J_a\alpha)$.
\end{lemma}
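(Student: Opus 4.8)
The plan is to prove the identity pointwise by a direct Weitzenb\"ock computation, exploiting that $g_\q$ is hyperk\"ahler so the $J_a$ are parallel. Since $\nabla^{g_\q}J_a=0$, for each $a$ the operator $\alpha\mapsto \nabla^{g_\q}_{J_a\cdot}J_a\alpha$ is simply the conjugate of $\nabla^{g_\q}\alpha$ by the parallel isometry $J_a$; in particular $|\nabla^{g_\q}_{J_a\cdot}J_a\alpha|^2=|\nabla^{g_\q}\alpha|^2$ and, more importantly, the corresponding Bochner/Weitzenb\"ock data transform covariantly. First I would expand the norm
\[
\tfrac14|\D\alpha|^2 = \tfrac14\Bigl|\nabla^{g_\q}\alpha+\sum_a\nabla^{g_\q}_{J_a\cdot}J_a\alpha\Bigr|^2
= \tfrac14\sum_{a,b}\langle \nabla^{g_\q}_{J_a\cdot}J_a\alpha,\nabla^{g_\q}_{J_b\cdot}J_b\alpha\rangle,
\]
with the convention $J_0=\mathrm{id}$. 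The diagonal terms $a=b$ each contribute $\tfrac14|\nabla^{g_\q}\alpha|^2$, for a total of $|\nabla^{g_\q}\alpha|^2$. The off-diagonal terms $a\neq b$ are the ones requiring care: using $J_aJ_b=\pm J_c$ (hyperk\"ahler quaternion relations) and parallelism one rewrites $\langle\nabla^{g_\q}_{J_a\cdot}J_a\alpha,\nabla^{g_\q}_{J_b\cdot}J_b\alpha\rangle$ as a full contraction of $\nabla^{g_\q}\alpha$ with itself twisted by $J_c$, i.e. a term of the schematic form $\langle\nabla^{g_\q}_{U}\alpha,\nabla^{g_\q}_{J_cU}J_c\alpha\rangle$-type expressions summed over a frame; these are exactly what gets packaged into the divergence terms $\di^{\star_{g_\q}}\eta_a(\alpha)$.

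Next I would compute the right-hand side. The standard Bochner formula on $1$-forms gives $\nabla^{g_\q\,\star}\nabla^{g_\q}\alpha = \Delta^{g_\q}\alpha - \mathrm{Ric}^{g_\q}(\alpha) = \Delta^{g_\q}\alpha$ since the hyperk\"ahler cone is Ricci-flat, and pairing with $\alpha$ and integrating by parts pointwise yields
\[
|\nabla^{g_\q}\alpha|^2 = \langle\Delta^{g_\q}\alpha,\alpha\rangle - \tfrac12\Delta^{g_\q}|\alpha|^2 + (\text{divergence of }\alpha\,\langle\nabla^{g_\q}\alpha,\cdot\rangle\text{-type term}),
\]
where I am using the pointwise identity $\tfrac12\Delta^{g_\q}|\alpha|^2 = \langle\nabla^{g_\q\,\star}\nabla^{g_\q}\alpha,\alpha\rangle - |\nabla^{g_\q}\alpha|^2$ rewritten with $\nabla^{g_\q\,\star}\nabla^{g_\q}=\Delta^{g_\q}$ on a Ricci-flat manifold. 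Then I would identify the leftover divergence term from the diagonal part together with the off-diagonal contractions, and check that their sum is precisely $\sum_a\di^{\star_{g_\q}}\eta_a(\alpha)$ for the stated $\eta_a(\alpha)(U)=g_\q(\nabla^{g_\q}_{J_aU}\alpha,J_a\alpha)$; note $\eta_0(\alpha)(U)=g_\q(\nabla^{g_\q}_U\alpha,\alpha)=\tfrac12 d|\alpha|^2(U)$ accounts for the term I split off above, so the four terms $a=0,1,2,3$ conspire correctly.

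The main obstacle is the bookkeeping of the off-diagonal cross terms: one must verify that $\sum_{a\neq b}\langle\nabla^{g_\q}_{J_a\cdot}J_a\alpha,\nabla^{g_\q}_{J_b\cdot}J_b\alpha\rangle$, after using the quaternion relations and parallelism of the $J_a$, reassembles into a total divergence with no residual curvature or zeroth-order terms. The cleanest route is to pick a local $g_\q$-orthonormal frame, write everything in terms of the components $(\nabla^{g_\q}\alpha)_{ij}$, and use that $J_a$ is skew-symmetric and parallel so that $(\nabla^{g_\q}_{J_aU}J_a\alpha)_i$ is a relabelling of $(\nabla^{g_\q}\alpha)_{kl}$ by the orthogonal matrix of $J_a$; the symmetry $\langle\nabla^{g_\q}_{J_aU}\alpha,J_b\alpha\rangle + \langle\nabla^{g_\q}_{J_bU}\alpha,J_a\alpha\rangle$ being a divergence follows because the antisymmetric-in-$(a,b)$ part is controlled by $dJ_c\alpha$-type terms and the symmetric part is $\di^{\star_{g_\q}}$ of an explicit $1$-form. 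Since the statement asserts the formula holds on arbitrary hyperk\"ahler manifolds, I would carry out this verification in the general hyperk\"ahler setting, never invoking the cone structure; the cone-specific formulas \eqref{co-di}--\eqref{lapl-c} are only needed later when this lemma is applied.
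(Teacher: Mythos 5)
Your proposal follows essentially the same route as the paper: expand $\vert \D\alpha\vert^2_{g_{\q}}$ in an orthonormal frame using that the $J_a$ are parallel isometries, so the diagonal terms give $\vert\nabla^{g_{\q}}\alpha\vert^2_{g_{\q}}$ and the cross terms, via the quaternion relations, reassemble into $\sum_a\di^{\star_{g_{\q}}}\eta_a(\alpha)$; then conclude with the Bochner formula $\Delta^{g_{\q}}=(\nabla^{g_{\q}})^{\star}\nabla^{g_{\q}}$ on the Ricci--flat manifold. Two small corrections to your bookkeeping. First, the identity $\tfrac12\Delta^{g_{\q}}\vert\alpha\vert^2=g_{\q}((\nabla^{g_{\q}})^{\star}\nabla^{g_{\q}}\alpha,\alpha)-\vert\nabla^{g_{\q}}\alpha\vert^2$ is exact, so your extra ``divergence of $\alpha$''-type term is spurious and there is no $\eta_0$ to account for: the sum in the lemma runs only over $a=1,2,3$, and carrying an additional $\di^{\star_{g_{\q}}}\eta_0(\alpha)=\tfrac12\Delta^{g_{\q}}\vert\alpha\vert^2$ into the final formula would double-count that term. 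Second, the step you defer as ``bookkeeping'' is where Ricci flatness is used a second time: computing $\di^{\star_{g_{\q}}}\eta_a(\alpha)$ produces the Hessian contraction $\sum_i g_{\q}((\nabla^{g_{\q}})^2_{e_i,J_ae_i}\alpha,J_a\alpha)$, which by the Ricci identity is a Ricci-curvature term and vanishes precisely because $g_{\q}$ is hyperk\"ahler; only then does each cross term equal an exact divergence with no residue. With those two points fixed, your argument coincides with the paper's proof.
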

\begin{proof}
Although the proof is fairly elementary we give some details for the convenience of the reader. Let $\{e_i\}$ be a local orthonormal basis in $(CM,g_{\q})$. From the definitions 
\begin{equation*} 
\begin{split}
\di^{\star_{g_{\q}}}\!\eta_a(\alpha)=&-\sum_i g_{\q}((\nabla^{g_{\q}})^2_{e_i,J_a e_i}\alpha,J_a\alpha) -\sum_ig_{\q}(\nabla^{g_{\q}}_{J_ae_i}\alpha,\nabla^{g_{\q}}_{e_i}(J_a\alpha))\\
=&-\sum_ig_{\q}(\nabla^{g_{\q}}_{J_ae_i}\alpha,\nabla^{g_{\q}}_{e_i}(J_a\alpha))
\end{split}
\end{equation*} 
by also taking into account the Ricci identity on $\Omega^1CM$ and that $g_{\q}$ is Ricci flat. At the same time purely algebraic arguments  show that  
\begin{equation*}
\sum_i g_{\q}(J_a \nabla^{g_{\q}}_{J_ae_i}\alpha,J_b\nabla^{g_{\q}}_{J_be_i}\alpha)=\sum_ig_{\q}(\nabla^{g_{\q}}_{e_i}\alpha,J_c\nabla^{g_{\q}}_{J_ce_i}\alpha)=
\di^{\star_{g_{\q}}}\eta_{c}(\alpha)
\end{equation*}
with cyclic permutation on the indices $abc$. Expansion of the norm of $\D\alpha$ based on this formula 
thus yields 
$\frac{1}{4}\vert \D\alpha\vert^2_{g_{\q}}=\vert \nabla^{g_{\q}} \alpha \vert^2_{g_{\q}}+\sum_a \di^{\star_{g_{\q}}}\! \eta_a(\alpha)$. As $g_{\q}$ is Ricci flat the Bochner formula on $\Omega^1CM$ reads $\Delta^{g_{\q}}=(\nabla^{g_{\q}})^{\star}\nabla^{g_{\q}}$ hence 
the claim follows from the identity 
$\tfrac{1}{2}\Delta^{g_{\q}}g_{\q}(\alpha,\alpha)=g_{\q}((\nabla^{g_{\q}})^{\star}\nabla^{g_{\q}}\alpha,\alpha)-\vert \nabla^{g_{\q}}\alpha \vert^2_{g_{\q}}.$
\end{proof}
The main idea in what follows is to integrate the Weitzenb\"ock-type formula from Lemma \ref{l23-ne} in direction of 
$M$.
To do so we first need to compute the integrals of the divergence type terms on $CM$ in Lemma \ref{l23-ne} in the special case of horizontal forms with quadratic growth.
\begin{lemma} \label{div}
Consider $\alpha:=r^2\gamma \in \Omega^1CM$ where $\gamma \in \Omega^1\!\Hh$. The following hold
\begin{itemize}
\item[(i)] $\int_M\Delta^{g_{\q}}g_{\q}(\alpha,\alpha)\vol=-8(n+1)\int_M g(\gamma,\gamma)\vol$ \\[-1.5ex]
\item[(ii)] $\int_M \di^{\star_{g_{\q}}}\eta_a(\alpha)\vol=-4(n+1)\int_M g(I_a\L_{\xi_a}\gamma-\gamma,\gamma)\vol$ \ for $a \in \{1,2,3\}$.
\end{itemize}
\end{lemma}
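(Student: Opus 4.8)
\textbf{Proof plan for Lemma \ref{div}.}
The strategy is to reduce both integrals to computations on the link $M$ using the cone formulas \eqref{co-di} and \eqref{lapl-c0}, together with the structure equations \eqref{str-xi} and \eqref{alg-J}, and then integrate over $M$ (identified with the slice $\{r=1\}$, so that $\vol = r^{-(4n+3)}\vol_{g_{\q}}|_{\{r=1\}}$, or simply by writing all quantities as explicit functions of $r$ times tensors pulled back from $M$). The point is that $\alpha = r^2\gamma$ with $\gamma \in \Omega^1\Hh$ is a homogeneous horizontal one-form, so every curvature or connection term on $CM$ decomposes into an $r$-power times an operator intrinsic to the 3-Sasaki data on $M$; the divergence-type terms become, after the $r$-integration is carried out formally at fixed radius, zeroth-order expressions in $\gamma$ plus first-order terms built from the $\L_{\xi_a}$ and $I_a$.

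For part (i), I would use \eqref{lapl-c0}: since $g_{\q}(\alpha,\alpha) = r^2 \cdot r^{-2}\|\gamma\|^2_{g}$... more carefully, $g_{\q}(\alpha,\alpha) = g_{\q}(r^2\gamma, r^2\gamma) = r^4 \cdot r^{-2} g(\gamma,\gamma) = r^2 g(\gamma,\gamma)$, viewing $\gamma$ as an $r$-independent section of $\Omega^1\Hh \subset \Omega^1_rM$ and remembering that the cometric on one-forms scales as $r^{-2}$. Write $f := g(\gamma,\gamma) \in C^\infty M$. Then $g_{\q}(\alpha,\alpha) = r^2 f$, and applying \eqref{lapl-c0} gives $\Delta^{g_{\q}}(r^2 f) = r^{-2}\Delta^g(r^2 f)$... but $f$ is pulled back from $M$, so $\Delta^g$ acts only on $f$ while the $r$-derivatives act on $r^2$: one gets $\Delta^{g_{\q}}(r^2 f) = \Delta^g f - (\partial_r^2 + (4n+3)r^{-1}\partial_r)(r^2) f = \Delta^g f - (2 + 2(4n+3))f = \Delta^g f - (8n+8) f$. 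Integrating over $M$, the term $\int_M \Delta^g f \,\vol$ vanishes since $\Delta^g$ is a divergence, leaving $-8(n+1)\int_M f\,\vol$, which is exactly (i). (A small subtlety: whether $\Delta^g$ here means the honest link Laplacian or already the value at $r=1$ of the cone computation — either way the divergence term integrates to zero.)

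For part (ii), the main work is to compute $\eta_a(\alpha)$ explicitly, then apply the codifferential formula \eqref{co-di}. By definition $\eta_a(\alpha)(U) = g_{\q}(\nabla^{g_{\q}}_{J_a U}\alpha, J_a\alpha)$. One expands $\nabla^{g_{\q}}$ on the cone in terms of $\nabla^g$ on $M$ and the $r$-derivative via the standard cone connection formulas (e.g.\ $\nabla^{g_{\q}}_{\partial_r}\partial_r = 0$, $\nabla^{g_{\q}}_{\partial_r}X = \nabla^{g_{\q}}_X\partial_r = r^{-1}X$ for $X$ tangent to $M$, and $\nabla^{g_{\q}}_X Y = \nabla^g_X Y - r g(X,Y)\partial_r$), uses \eqref{alg-J} to resolve the $J_a$'s, and uses \eqref{str-xi} to express the resulting covariant derivatives of $\xi^a$ and of $\omega_b$ in horizontal/vertical pieces. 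Since $\alpha = r^2\gamma$ with $\gamma$ horizontal, $\eta_a(\alpha)$ will come out as $r^k$ times a one-form on $CM$ whose horizontal and $\di r$-components are expressible through $\gamma$, $\L_{\xi_a}\gamma$, and $I_a$. Feeding this into \eqref{co-di} and integrating over $M$ kills the genuine $\di^\star$ and $\partial_r$ pieces, and what survives is the claimed $-4(n+1)\int_M g(I_a\L_{\xi_a}\gamma - \gamma, \gamma)\,\vol$; by $\su(2)$-symmetry it suffices to do $a=1$. The hard part — and the place where care is needed — is the bookkeeping in computing $\nabla^{g_{\q}}_{J_aU}(r^2\gamma)$ and its pairing with $J_a(r^2\gamma)$: one must correctly track the mixing of the $\xi_b, \xi_c$ directions with $\Hh$ produced by $J_a$, the torsion-type terms $2\omega_a$ in $\d\xi^a$, and the precise constant $4(n+1)$, which (matching Remark \ref{rmk1-I}(ii) and the appearance of $4n+3$ in the cone formulas) is where the dimension enters. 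This is essentially the hyperkähler-cone analogue of the Cheeger–Tian computation in Proposition \ref{est-D}, and I would cross-check the final constants against the $a$-summed identity and against \eqref{prel-CTT}.
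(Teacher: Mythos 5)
Your part (i) is correct and is essentially the paper's own argument: $g_{\q}(\alpha,\alpha)=r^2g(\gamma,\gamma)$, the comparison formula \eqref{lapl-c0} applied to $r^2f$ with $f=g(\gamma,\gamma)$ pulled back from $M$ gives $\Delta^{g_{\q}}(r^2f)=\Delta^gf-8(n+1)f$, and $\int_M\Delta^gf\,\vol=0$.

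For part (ii) there is a genuine gap, in two respects. First, you do not actually carry out the computation, and the plan you sketch (expanding $\nabla^{g_{\q}}_{J_aU}(r^2\gamma)$ for all $U$, tracking the mixing of $\xi_b,\xi_c$ with $\Hh$ and the $2\omega_a$ terms) is far more than is needed. Writing $\eta_a(\alpha)=\gamma_1+\gamma_0\,\di r$ with $\gamma_1\in\Omega^1_rM$, formula \eqref{co-di} gives $\di^{\star_{g_{\q}}}\eta_a(\alpha)=r^{-2}\di^{\star}\gamma_1-\partial_r\gamma_0-(4n+3)r^{-1}\gamma_0$, and after $\int_M(\cdot)\vol$ only the terms in $\gamma_0=\eta_a(\alpha)(\partial_r)$ survive. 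That single component is a one-line computation: $J_a\partial_r=-r^{-1}\xi_a$ and the horizontality of $\gamma$ give $\gamma_0=-r^{-1}g_{\q}(\nabla^{g_{\q}}_{\xi_a}(r^2\gamma),r^2I_a\gamma)=-r\,g(\nabla^g_{\xi_a}\gamma,I_a\gamma)$, and the structure equations \eqref{str-xi} give $\nabla^g_{\xi_a}\gamma=\L_{\xi_a}\gamma+I_a\gamma$, whence $\gamma_0=r\,g(I_a\L_{\xi_a}\gamma-\gamma,\gamma)$. Second, and more seriously, your assertion that integrating over $M$ ``kills the genuine $\di^{\star}$ and $\partial_r$ pieces'' is wrong for the $\partial_r$ piece: $\gamma_0$ is homogeneous of degree $1$ in $r$, so $-\partial_r\gamma_0=-r^{-1}\gamma_0\neq 0$, and it is precisely this contribution that turns the coefficient $-(4n+3)$ into $-(4n+4)=-4(n+1)$. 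Executed exactly as you describe, the plan would yield the wrong constant $-(4n+3)$.
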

\begin{proof}
(i) We have 
$ \Delta^{g_{\q}}g_{\q}(\alpha,\alpha)=\Delta^gg(\gamma,\gamma)-8(n+1)g(\gamma,\gamma)
$
by the comparison formula \eqref{lapl-c0} 
and the claim follows by integration over $M$.\\
(ii) Split $\eta_a(\alpha)=\gamma_1+\gamma_0 \di\!r$ according to 
$\Omega^1CM=\Omega^1_rCM \oplus \di\!r \wedge C^{\infty}CM$. From the definitions 
\begin{equation*} \label{L-co4}
\gamma_0=\eta_a(r^2\gamma)\partial_r=-r^{-1}g_{\q}(\nabla^{g_{\q}}_{\xi_a}(r^2\gamma),J_a(r^2\gamma))=-rg(\nabla^g_{\xi_a}\gamma,I_a\gamma)
\end{equation*}
by taking into account that $\gamma \in \Omega^1\!\Hh$. At the same time 
\begin{equation} \label{na-xi}
\nabla^g_{\xi_a}\gamma=\L_{\xi_a}\gamma+I_a\gamma
\end{equation} 
as a consequence of the structure equations \eqref{str-xi}. Summarising, 
$ \gamma_0=rg(I_a\L_{\xi_a}\gamma-\gamma,\gamma).$
The comparison formula in \eqref{co-di} and Stokes' theorem thus yield 
\begin{equation*}  
\begin{split}
\int_M \di^{\star_{g_{\q}}}\eta_a(\alpha)\vol=&\quad \int_M( r^{-2}\di^{\star}\gamma_1-\partial_r \gamma_0-(4n+3)r^{-1}\gamma_0)\vol
\\
=&-\int_M (\partial_r \gamma_0+(4n+3)r^{-1}\gamma_0)\vol=-(4n+4)\int_Mg(I_a\L_{\xi_a}\gamma-\gamma,\gamma)\vol
\end{split}
\end{equation*}
as claimed.
\end{proof}
Next we need to gain insight into the contribution of vertical terms to norms of type $\vert \D\alpha\vert^2_{g_{\q}}$ with 
$\alpha$ as in Lemma \ref{div}. To that extent we first let $\Omega^1_r\!\Hh$ be the $g_{\q}$-orthogonal complement of $\spa \{\di\!r, \xi^1,\xi^2,\xi^3\}$ within $\Omega^1CM$. Then \eqref{spl-c} yields the 
$g_{\q}$-orthogonal splitting 
$\Omega^1CM=\spa \{\di\!r, \xi^1,\xi^2,\xi^3\} \oplus \Omega^1_r\!\Hh$.
With respect to this splitting we define 
\begin{equation} \label{DHd}
\D^{\Hh}_U\alpha:=(\D_U\alpha)_{\Omega^1_r\!\Hh}
\end{equation}
whenever $(U,\alpha) 
\in TCM \times \Omega^1CM$.
\begin{lemma} \label{no-ver}
Assuming that $\alpha=r^2\gamma$ with $\gamma \in \Omega^1\!\Hh$ we have 
\begin{itemize}
\item[(i)] $\D_{\partial_r}\alpha=r(4-\p)\gamma$ \\[-2ex]
\item[(ii)] $\int_M \vert \D_{\partial_r}\alpha\vert^2_{g_{\q}}\vol=
\int_M g((\C-6\!\p+16)\gamma,\gamma)\vol$\\[-2ex]
\item[(iii)] $\D_X\alpha=\D^{\Hh}_X\alpha+2r^{-1}\alpha(X)\di\!r+2\sum_{a}\alpha(I_aX)\xi^a$ \ \ with $X \in \Hh$.
\end{itemize}
\end{lemma}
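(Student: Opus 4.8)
The plan is to compute everything by brute force from the definition $\D_U\alpha=\nabla^{g_{\q}}_U\alpha+\sum_a\nabla^{g_{\q}}_{J_aU}(J_a\alpha)$, reducing all cone covariant derivatives to operations on $M$ via the cone connection formulas and the structure equations \eqref{str-xi}, \eqref{alg-J}. The key input is the set of Koszul-type relations: for $\alpha=r^2\gamma$ with $\gamma\in\Omega^1\!\Hh$ one has $\nabla^{g_{\q}}_{\partial_r}(r^2\gamma)=r\nabla^g_{\cdot}$-type corrections together with the radial scaling, while $\nabla^{g_{\q}}_{\xi_a}(r^2\gamma)$ brings in \eqref{na-xi}, i.e. $\nabla^g_{\xi_a}\gamma=\L_{\xi_a}\gamma+I_a\gamma$, and $\nabla^{g_{\q}}_X(r^2\gamma)$ for horizontal $X$ produces a horizontal piece plus vertical/radial pieces governed by $\Omega^a=-\tfrac12\d(r^2\xi^a)$. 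Also one needs $J_a\alpha=r^2 I_a\gamma$ on $\Hh$ and $(J_a\alpha)(\partial_r)=-r\,\gamma(\xi_a)=0$, $(J_a\alpha)(\xi_b)=r^2\gamma(\xi_c)=0$, which is what makes $\alpha$ behave well.

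For (i): I would compute $\D_{\partial_r}\alpha=\nabla^{g_{\q}}_{\partial_r}\alpha+\sum_a\nabla^{g_{\q}}_{J_a\partial_r}(J_a\alpha)$. Since $J_a\partial_r=-r^{-1}\xi_a$, the $a$-th summand is $-r^{-1}\nabla^{g_{\q}}_{\xi_a}(r^2 I_a\gamma)$; expanding via \eqref{na-xi} applied to $I_a\gamma$ (using $\L_{\xi_a}(I_a\gamma)=(\L_{\xi_a}I_a)\gamma+I_a\L_{\xi_a}\gamma=I_a\L_{\xi_a}\gamma$ by \eqref{Lie-osI}, since $\L_{\xi_a}I_a=0$) and collecting the $r$-powers should yield a term $-\sum_a I_a\L_{\xi_a}\gamma=-\p\gamma$ together with a multiple of $\gamma$; combined with $\nabla^{g_{\q}}_{\partial_r}\alpha$ (which is $r\cdot$ a scaling term, contributing the constant) this gives $r(4-\p)\gamma$. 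The bookkeeping of the scalar constant $4$ is where I expect the most chance of a sign/coefficient slip, so I would cross-check it against the $n=7$ computations in \cite{NS}.

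For (ii): starting from (i), $\vert\D_{\partial_r}\alpha\vert^2_{g_{\q}}=r^2\,g_{\q}((4-\p)\gamma,(4-\p)\gamma)$, and since $g_{\q}$ restricted to $\Omega^1\!\Hh$ is $r^{-2}g$, the integrand over $M$ (with $\vol$ the link volume) is $g((4-\p)\gamma,(4-\p)\gamma)=g((16-8\p+\p^2)\gamma,\gamma)$ using symmetry of $\p$. Then Lemma \ref{l1s2}(iii), $\p^2-2\p=\C$, i.e. $\p^2=\C+2\p$, converts this to $g((\C-6\p+16)\gamma,\gamma)$, giving exactly (ii). This step is essentially immediate once (i) is in hand.

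For (iii): with $X\in\Hh$, write $\D_X\alpha=\nabla^{g_{\q}}_X\alpha+\sum_a\nabla^{g_{\q}}_{I_aX}(J_a\alpha)$ and decompose each cone covariant derivative into its $\di\!r$-, $\xi^a$-, and $\Omega^1_r\!\Hh$-components. The radial component of $\nabla^{g_{\q}}_X\alpha$ picks up $-r g_{\q}(X,\cdot)$-type terms from $\nabla^{g_{\q}}_X\partial_r=r^{-1}X$ paired against $\alpha$, producing the $2r^{-1}\alpha(X)\di\!r$ once the $\sum_a$ contributions are added (each $\nabla^{g_{\q}}_{I_aX}(J_a\alpha)$ contributes to the radial part via $J_a\alpha(\partial_r)$-adjacent terms); similarly the $\xi^a$-components come from the second fundamental form of the foliation, i.e. from the $\omega_a=-g_{\Hh}(I_a\cdot,\cdot)$ terms in \eqref{str-xi}, and summing over $a$ yields $2\sum_a\alpha(I_aX)\xi^a$. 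The remaining horizontal part is by definition $\D^{\Hh}_X\alpha$, so (iii) is a clean accounting identity. The main obstacle across all three parts is purely organizational: keeping the cone Christoffel corrections, the $\su(2)$-action identities \eqref{Lie-osI}, and the powers of $r$ consistent; no conceptual difficulty arises, and I would lean on the dimension-$7$ verifications in \cite{NS} as a sanity check on every numerical constant.
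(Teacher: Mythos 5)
Your proposal is correct and follows essentially the same route as the paper: a direct expansion of $\D$ using the cone identities $\nabla^{g_{\q}}_{\partial_r}=\partial_r-r^{-1}$ on $\Omega^1_rM$, $\nabla^{g_{\q}}_{\xi_a}=\L_{\xi_a}+I_a$ on $\Omega^1\!\Hh$, $\nabla^{g_{\q}}_X\partial_r=r^{-1}X$ and $\nabla^{g_{\q}}_X\xi_a=-I_aX$, together with $\p^2-2\p=\C$ for part (ii). The constants you flagged do come out as claimed ($3r\gamma$ from $\sum_a I_a^2=-3$ plus $r\gamma$ from $\nabla^{g_{\q}}_{\partial_r}(r^2\gamma)$ gives the $4$ in (i), and $-r^{-1}\alpha(X)+3\,r^{-1}\alpha(X)$ gives the $2$ in (iii)).
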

\begin{proof}
(i) From the algebraic definition of the complex structures $J_a$ in \eqref{alg-J} we get 
$\D_{\partial_r}\alpha=\nabla^{g_{\q}}_{\partial_r}\alpha-r\sum_a J_a\nabla_{\xi_a}^{g_{\q}}\gamma.$
The claim follows by using the identities $\nabla^{g_{\q}}_{\partial_r}=\partial_r-r^{-1}$ on $\Omega^1M \subseteq \Omega^1_rCM$ and $\nabla_{\xi_a}^{g_{\q}}=\nabla^g_{\xi_a}=\L_{\xi_a}+I_a$ on $\Omega^1\!\Hh$.\\
(ii) follows from (i) and part (iii) in Lemma \ref{l1s2} since the operator $\p$ is self-adjoint.\\
(iii) Recall that $\nabla_X^{g_{\q}}\partial_r=r^{-1}X$ and hence $\nabla_X^{g_{\q}}\xi_a=-I_aX$ since $(g_{\q},J_a)$ are K\"ahler. As
$\alpha$ vanishes on the distribution $\spa \{\partial_r,\xi_1,\xi_2,\xi_3\}$ these facts lead to 
\begin{equation*}
\nabla_X^{g_{\q}}\alpha=\nabla^{\Hh}_X\alpha-r^{-1}\alpha(X)\di\!r+\sum_a\alpha(I_aX) \xi^a
\end{equation*}
where $\nabla^{\Hh}_X\alpha$ is the projection of $\nabla_X^{g_{\q}}\alpha$ onto $\Omega^1_r\!\Hh \subset \Omega^1CM$.
After using the variable change $(X,\alpha) \mapsto (J_aX,J_a\alpha)$ this yields 
$$\nabla_{J_aX}^{g_{\q}}J_a\alpha=\nabla^{\Hh}_{I_aX}I_a\alpha+r^{-1}\alpha(X)\di\!r-2\alpha(I_aX)\xi^a+\sum_b\alpha(I_bX) \xi^b$$ and the claim follows from the definition of $\D$ by gathering terms.
\end{proof}
Thus prepared, the main estimate for the sub-Laplacian $\Delta_{\Hh}$ acting on horizontal $1$-forms needed in this paper is now at hand.
\begin{propn} \label{Lco-2}
The sub-Laplacian $\Delta_{\Hh}$ acting on $\Omega^1\!\Hh$ satisfies 
\begin{equation*}
\Delta_{\Hh} \geq (4n-2)\p-8n+8
\end{equation*}
with equality on the space $\{\gamma \in \Omega^1\!\Hh : \D^{\Hh}_X\gamma=0, X \in \Hh\}$, where the operator 
$\D^{\Hh}$ is defined as in \eqref{DHd}.
\end{propn}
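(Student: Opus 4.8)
The plan is to integrate the Weitzenb\"ock-type identity of Lemma~\ref{l23-ne} over $M$, applied to $\alpha = r^2\gamma$ for an arbitrary $\gamma \in \Omega^1\!\Hh$, and to extract the three bracketed ingredients from Lemmas~\ref{div} and~\ref{no-ver}. Concretely, integrating
$$\tfrac14\vert \D\alpha\vert^2_{g_{\q}} = g_{\q}(\Delta^{g_{\q}}\alpha,\alpha) - \tfrac12\Delta^{g_{\q}}g_{\q}(\alpha,\alpha) + \sum_a \di^{\star_{g_{\q}}}\!\eta_a(\alpha)$$
over $M$, the divergence terms are handled by Lemma~\ref{div}: the middle term contributes $+4(n+1)\int_M g(\gamma,\gamma)\vol$ and the sum of the three $\eta_a$-terms contributes $-4(n+1)\int_M g(\p\gamma - 3\gamma,\gamma)\vol$, using $\sum_a I_a\L_{\xi_a} = \p$. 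The term $g_{\q}(\Delta^{g_{\q}}\alpha,\alpha)$ must be re-expressed in terms of operators on $M$: since $\alpha=r^2\gamma$ is a purely horizontal $r$-form with quadratic growth, the comparison formula \eqref{lapl-c} (top-left entry, the off-diagonal entry acting trivially because $\di^{\star}\gamma_1$-type cross terms vanish after the $r$-scaling and $L^2(M)$ pairing) gives $\int_M g_{\q}(\Delta^{g_{\q}}\alpha,\alpha)\vol = \int_M g((\Delta^g - 8(n+1))\gamma,\gamma)\vol$, and then Lemma~\ref{s2l1}(ii) rewrites $\Delta^g = \Delta_{\Hh} + \C$ on $\Omega^1\!\Hh$.

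\textbf{Separating the horizontal and vertical parts of the norm.} The left-hand side $\int_M\vert\D\alpha\vert^2_{g_{\q}}\vol$ must be split according to the orthogonal decomposition $\Omega^1CM = \spa\{\di r,\xi^1,\xi^2,\xi^3\}\oplus\Omega^1_r\!\Hh$. Lemma~\ref{no-ver}(i)--(iii) does exactly this: the $\partial_r$-direction contributes, via (ii), $\int_M g((\C - 6\p + 16)\gamma,\gamma)\vol$; the horizontal directions $X\in\Hh$ contribute, via (iii), the genuinely nonnegative piece $\sum_X\vert\D^{\Hh}_X\gamma\vert^2$ plus the explicitly computable vertical corrections $2r^{-1}\alpha(X)\di r + 2\sum_a\alpha(I_aX)\xi^a$; and one checks there is no contribution from the $\xi_a$-directions of $\D$ itself beyond what (iii) already records (because $\D$ is built from $\nabla^{g_{\q}}$ in the $J_aU$-directions and $\alpha$ kills the vertical cone directions). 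Squaring and summing the correction terms in (iii) over an orthonormal horizontal frame produces a multiple of $\int_M g(\gamma,\gamma)\vol$ (from $\sum_X\alpha(X)^2 = \vert\gamma\vert^2$ and $\sum_X\alpha(I_aX)^2 = \vert\gamma\vert^2$), with the precise constant dictated by $\dim\Hh = 4n$.

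\textbf{Assembling the inequality.} Putting everything together, the identity becomes
$$0 \le \sum_{X}\int_M\vert\D^{\Hh}_X\gamma\vert^2\vol = \int_M g\bigl((\Delta_{\Hh} + \C)\gamma,\gamma\bigr)\vol + (\text{linear-in-}\C,\p,\Id\ \text{terms from the decomposition and divergences})\vol.$$
Here the crucial cancellation is that the $\C$-contributions must cancel: $\C$ appears with coefficient $+1$ from $\Delta^g = \Delta_{\Hh}+\C$ and is exactly offset by the $\C$-term in Lemma~\ref{no-ver}(ii), so that after collecting all terms one is left with an inequality purely in $\Delta_{\Hh}$, $\p$, and the identity — no $\C$. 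The residual coefficients of $\p$ and $\Id$ are then bookkeeping: the $\p$-coefficient assembles from $-6\p$ (Lemma~\ref{no-ver}(ii)) and $-4(n+1)\p$ (divergence terms) against whatever $\p$-free rewriting we do, yielding the stated $(4n-2)\p$; similarly the constant assembles to $-8n+8$. Equality holds precisely when $\sum_X\int_M\vert\D^{\Hh}_X\gamma\vert^2\vol = 0$, i.e.\ when $\D^{\Hh}_X\gamma = 0$ for all $X\in\Hh$, which is the stated equality set.

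\textbf{Main obstacle.} The delicate point is the careful separation of the vertical contributions to $\vert\D\alpha\vert^2_{g_{\q}}$ and the verification that all $\C$-dependence cancels; in particular one must confirm that the off-diagonal entries of \eqref{lapl-c} and \eqref{co-di} contribute nothing after integration over $M$ (they pair a function against $\di^{\star}$ of a $1$-form, or involve $r^{-1}\di$, and vanish by Stokes or by the $r$-homogeneity bookkeeping), and that Lemma~\ref{no-ver}(iii) accounts for \emph{all} of $\D_X\alpha$ with no hidden vertical piece. Once the $\C$-cancellation is confirmed, the rest is a finite linear computation in $n$.
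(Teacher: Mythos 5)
Your strategy is the paper's own: integrate Lemma \ref{l23-ne} with $\alpha=r^2\gamma$, evaluate the divergence terms by Lemma \ref{div}, and split $\vert\D\alpha\vert^2_{g_{\q}}$ using Lemma \ref{no-ver}. However, two of the steps you commit to are wrong, and they do not wash out in the ``bookkeeping''. First, the directions $U=\xi_a$ \emph{do} contribute to $\vert\D\alpha\vert^2_{g_{\q}}$: the quaternionic symmetry $J_a\circ\D_{J_aU}=\D_U$ gives $r^{-2}\vert\D_{\xi_a}\alpha\vert^2_{g_{\q}}=\vert\D_{\partial_r}\alpha\vert^2_{g_{\q}}$, so the correct decomposition is $\vert\D\alpha\vert^2_{g_{\q}}=4\vert\D_{\partial_r}\alpha\vert^2_{g_{\q}}+16g(\gamma,\gamma)+\vert\D^{\Hh}\alpha\vert^2_{g_{\q}}$. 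Your claim that ``there is no contribution from the $\xi_a$-directions'' (on the grounds that $\alpha$ kills the vertical cone directions) is false: $\alpha(\xi_a)=0$ does not make $\D_{\xi_a}\alpha$ vanish, and by Lemma \ref{no-ver}(i) it equals $-rJ_a\bigl(r(4-\p)\gamma\bigr)$, which is nonzero in general. The coefficient $4$ is exactly what makes your ``crucial cancellation'' work: after multiplying by the overall $\tfrac14$, the vertical block contributes $-\vert\D_{\partial_r}\alpha\vert^2_{g_{\q}}=-g((\C-6\p+16)\gamma,\gamma)$, whose $-\C$ cancels the $+\C$ from $\Delta^g=\Delta_{\Hh}+\C$. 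With only one copy of $\vert\D_{\partial_r}\alpha\vert^2_{g_{\q}}$ you would be left with $+\tfrac34\C$, which is unbounded below by no multiple of $\p$ and the identity, so the estimate collapses; the $\p$- and constant coefficients would also come out wrong.

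Second, your comparison constant for the $1$-form Laplacian is incorrect: from the top-left entry of \eqref{lapl-c}, $\partial_r^2(r^2\gamma)=2\gamma$ and $(4n+1)r^{-1}\partial_r(r^2\gamma)=(8n+2)\gamma$, hence $g_{\q}(\Delta^{g_{\q}}(r^2\gamma),r^2\gamma)=g((\Delta^g-2(4n+2))\gamma,\gamma)$, i.e.\ the constant is $-(8n+4)$ and not $-8(n+1)$; you have used the scalar coefficient $(4n+3)$ from \eqref{lapl-c0} in place of $(4n+1)$. (A smaller point: the $16\,g(\gamma,\gamma)$ produced by the correction terms in Lemma \ref{no-ver}(iii) is not ``dictated by $\dim\Hh=4n$''; it equals $2^2(1+3)=16$ independently of $n$, since $\sum_i\gamma(e_i)^2=\sum_i\gamma(I_ae_i)^2=\vert\gamma\vert^2$.) Once both points are corrected — the factor $4$ on $\vert\D_{\partial_r}\alpha\vert^2_{g_{\q}}$ and the constant $-(8n+4)$ — the computation does close, giving $\tfrac14\int_M\vert\D^{\Hh}\alpha\vert^2_{g_{\q}}\vol=\int_M g((\Delta_{\Hh}+(2-4n)\p+8n-8)\gamma,\gamma)\vol$, which is the stated inequality together with the stated equality set.
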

\begin{proof}
The first step is to integrate the norm identity from Lemma \ref{l23-ne} with $\alpha=r^2\gamma$. Take into account Lemma \ref{div} 
and that $g_{\q}(\Delta^{g_{\q}}\alpha,\alpha)=g((\Delta^g-2(4n+2))\gamma,\gamma)$ as granted by \eqref{lapl-c}; after a short algebraic computation we end up with 
\begin{equation} \label{prel-1e}
\frac{1}{4}\int_M \vert \D \alpha \vert^2_{g_{\q}}\vol=\int_Mg((\Delta^g-4(n+1)\p+8n+12)\gamma,\gamma)\vol.
\end{equation}
To prove the claim there remains to estimate $\vert \D \alpha \vert^2_{g_{\q}}$ as follows. The definition of the operator $\D$ leads, after a short computation, to $J_a\D_{J_aU}=\D_U$ on $\Omega^1CM$ for $a \in \{1,2,3\}$ and $U \in TCM$; in particular $r^{-2}\vert \D_{\xi_a}\alpha \vert^2_{g_{\q}}=\vert \D_{\partial_r} \alpha \vert^2_{g_{\q}}$. Letting $\{e_i\}$ be a local $g$-orthonormal frame in $\Hh$ we thus get 
\begin{equation*}
\begin{split}
\vert \D \alpha \vert^2_{g_{\q}}=&\vert \D_{\partial_r} \alpha \vert^2_{g_{\q}}+r^{-2}\sum_a \vert \D_{\xi_a}\alpha \vert^2_{g_{\q}}+ r^{-2}\sum_i \vert \D_{e_i}\alpha \vert^2_{g_{\q}}\\
=&4\vert \D_{\partial_r} \alpha \vert^2_{g_{\q}}+r^{-2}\sum_i \vert \D_{e_i}\alpha \vert^2_{g_{\q}}=
4\vert \D_{\partial_r} \alpha \vert^2_{g_{\q}}+16g(\gamma,\gamma)+\vert \D^{\Hh} \alpha \vert^2_{g_{\q}}
\end{split}
\end{equation*}
after taking into account part (iii) in Lemma \ref{no-ver}. Plugged into \eqref{prel-1e} this yields 

\begin{equation*} 
\begin{split}
\frac{1}{4}\int_M\vert \D^{\Hh} \alpha \vert^2_{g_{\q}}\vol=&\int_Mg((\Delta^g-4(n+1)\p+8n+8)\gamma,\gamma)\vol
-\int_M\vert \D_{\partial_r} \alpha \vert^2_{g_{\q}} \vol\\
=&\int_Mg((\Delta^g-\C+(-4n+2)\p+8n-8)\gamma,\gamma)\vol\\
=& \int_Mg((\Delta_{\Hh}+(-4n+2)\p+8n-8)\gamma,\gamma)\vol
\end{split}
\end{equation*}
after successively using part (ii) in Lemma \ref{no-ver} and $g(\Delta^g \gamma, \gamma)= g(\Delta_{\Hh}\gamma + \C \gamma, \gamma)$ as granted by Lemma \ref{s2l1}, (ii). The estimate in the claim is therefore proved, with equality 
on forms $\gamma \in \Omega^1\!\Hh$ such that $\D^{\Hh}_X\gamma=0, X \in \Hh$. 
\end{proof}
The structure of the limiting space $\{\gamma \in \Omega^1\!\Hh : \D^{\Hh}_X\gamma=0, X \in \Hh\}$ will not be further explored, as it is not needed in this paper.
\section{Estimates on functions} \label{fctns}
\subsection{Lower bounds for $\Delta_{\Hh}$} \label{lb1}
Recall that the (real) irreducible finite dimensional representations of the Lie algebra $\su(2)$ are completely determined by their 
dimension or their Casimir eigenvalues. They come in two series:
$$
(\pi_l,U_l)_{l \geq 1} \quad \mbox{with}\quad  \dim_{\bbR}U_l=2l+1 \qquad  \mbox{and} \qquad  (\rho_l,V_{l})_{l \in \mathbb{N}} \quad  \mbox{with} \quad  \dim_{\bbR}V_l=4l+4.
$$
Their explicit realisation is not needed here, we only record that the Casimir operator $\C$ acts on 
$U_l$ respectively $V_l$ as $m(m+2)$ with $m=2l$ respectively $m=2l+1$. From these purely algebraic facts we derive the following lemma which will be used frequently in this paper.
\begin{lemma}\label{dense} The following hold
\begin{itemize}
\item[(i)] the eigenvalues of the Casimir operator $\C$ acting on $C^{\infty}M$ are of the form 
$m(m+2)$ with $m \in \mathbb{N}$
\item[(ii)] the direct sum of 
the $\mathfrak{su}(2)$ invariant spaces 
$$
C^{\infty}_m M:=C^{\infty}M \cap \ker(\C-m(m+2)), m \in \mathbb{N}
$$ 
is dense in $L^2M$.
\end{itemize}
\end{lemma} 
\begin{proof}
This is a standard argument based on the invariance properties of $\Delta^g$ which goes as follows.
Since $\xi_a,a=1,2,3$ are Killing vector fields, any eigenspace of $\Delta^g$, say $E_{\lambda}$, is preserved by the Lie derivatives $\L_{\xi_a}, a=1,2,3$ and thus becomes a finite dimensional $\su(2)$-representation. Splitting $E_{\lambda}$ into irreducible summands, counted with multiplicities, shows that $\C$ acts on any such irreducible component by multiplication with 
$m(m+2)$ for some $m \in \mathbb{N}$. This is due to the structure of $\su(2)$-representation recalled above. 
Both claims follow now, after re-ordering of the $\su(2)$-irreducible summands, from having the direct sum of eigenspaces of $\Delta^g$ dense in $L^2M$. 
\end{proof}
Note that $C^{\infty}_0 M = C^{\infty}_{B} M$ is the space of $\su(2)$ invariant or 
basic functions and that the spaces $C^{\infty}_mM$ are infinite dimensional for any 
$m \in \mathbb{N}$, see also Remark \ref{reg-case}.
To further setup notation, whenever $(\pi,V)$ is a finite 
dimensional real representation of  $\su(2)$ we write
\begin{equation*}
C^{\infty}_{\pi}(M,V):=\{f \in C^{\infty}(M,V) : \L_{\xi_a}f=-\pi(A_a) \circ f, a=1,2,3\}  \ 
\end{equation*}
for the basis $\{A_1,A_2,A_3\}$ of  $\su(2)$ considered in Section \ref{3sas-p}. These functions spaces will be mainly used for the adjoint representation $(\pi_1,U_1)$ when $U_1=\bbR^3$ and $\pi_1$ acts by matrix multiplication on the basis $A_1,A_2,A_3$ of $\su(2)$. Nevertheless we observe that, more generally, 
\begin{lemma} \label{peter-w}
The map \begin{equation} \label{embd1}
\Hom_{\bbR}(U_l,C^{\infty}_{\pi_l}(M,U_l))\to C^{\infty}_{2l}M, \ F \mapsto (x \in M \mapsto \tr (v \mapsto F(v)x))
\end{equation}
is well defined and surjective. 
\end{lemma}
\begin{proof}
Let $f \in C^{\infty}_{\pi_l}(M,U_l)$; from the definitions it is clear that the component functions of 
$f$, with respect to a basis in $U_l$, belong to $C^{\infty}_{2l}(M)$. Thus the map in the statement is well 
defined. Now we prove that the map in \eqref{embd1} is surjective. Using eigenspaces of $\Delta^g$, as in the proof of Lemma \ref{dense}, it is enough to consider finite dimensional $\su(2)$ subrepresentations in $C^{\infty}_{2l}M$. As the Casimir operator acts as $2l(2l+2)$ these are isomorphic to a direct sum of representations isomorphic to $U_l$. By linearity we may thus reduce to prove surjectivity for just one such summand, say $E \subseteq C^{\infty}_{2l}M$. Choose an $\su(2)$-equivariant isomorphism $I:E \to U_l$, that is $I(\L_{\xi_a}f)=\pi_l(A_a) \circ I(f)$ for all $f \in E$ and 
$a=1,2,3$. We may assume that $I$ is an isometry, when $E$ is equipped with the $L^2$ scalar product and 
$\langle \cdot, \cdot \rangle$ is a  $\su(2)$-equivariant 
inner product  on $U_l$. Also choose a $L^2$-orthonormal basis of functions $\{f_1, \ldots, f_{2l+1}\}$ in $E$ and 
consider the linear basis $\{\mathbf{e}_1:=If_1, \ldots, \mathbf{e}_{2l+1}:=If_{2l+1} \}$ in $U_l$; the latter is thus orthonormal with respect to the inner product $\langle \cdot, \cdot \rangle$ on $U_l$. Having the isomorphism $I$ equivariant under $\su(2)$ thus reads 
$\L_{\xi_a}f_i=\sum_k \langle \pi_l(A_a)\mathbf{e}_i,\mathbf{e}_k \rangle f_k.$
Now consider $\phi:=\sum_i f_i \mathbf{e}_i$ in $C^{\infty}(M,U_l)$; then 
\begin{equation*}
\begin{split}
\L_{\xi_a} \phi&=\sum_i (\L_{\xi_a}f_i) \mathbf{e}_i=\sum_{i,k} \langle \pi_l(A_a)\mathbf{e}_i,\mathbf{e}_k \rangle f_k \mathbf{e}_i=-\sum_{i,k} \langle e_i, \pi_l(A_a)\mathbf{e}_k \rangle f_k\mathbf{e}_i\\
&=-\sum_kf_k\pi_l(A_a)\mathbf{e}_k=-\pi_l(A_a) \circ \phi
\end{split}
\end{equation*}
since $\langle \cdot, \cdot \rangle$ is $\su(2)$-invariant.
We have thus showed that $\phi$ belongs to $C^{\infty}_{\pi_l}(M,U_l)$.
Finally, defining $F \in \Hom_{\bbR}(U_l,C^{\infty}_{\pi_l}(M,U_l))$ on basis vectors via $F\mathbf{e}_i=\lambda_i \phi$ it follows that the image of the map 
in \eqref{embd1} on $F$ is $\sum_i \lambda_i f_i$ for any 
real coefficients $\lambda_1, \ldots, \lambda_{2l+1}$ and the surjectivity onto $E$ is proved.
\end{proof}
An entirely similar statement can be made for $C^{\infty}_{2l+1}M$.
To be able to prove spectral estimates for $\Delta_{\Hh}$ we also need a few operator identities. 
\begin{lemma} \label{h-diff} The following hold for  $f \in C^{\infty}M$
\begin{itemize}
\item[(i)] \label{dId}$\di_{\Hh}^{\star}I_a\d_{\Hh}\!f=-4n\L_{\xi_a}f$ 
\item[(ii)]\label{dpd} $\di_{\Hh}^{\star} \p \di_{\Hh}f=4n\C f$
\item[(iii)] \label{comdd}$[\Delta_{\Hh}, \, \d_{\Hh} ]f = 2\p(\d_{\Hh}\!f).$ 
\end{itemize}
\end{lemma}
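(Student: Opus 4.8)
\textbf{Proof plan for Lemma \ref{h-diff}.}

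The plan is to reduce everything to the two basic structural facts already available: the commutation relation $[\d_{\Hh},\L_{\xi_a}]=0$ together with the formula \eqref{na-xi}, i.e. $\nabla^g_{\xi_a}\gamma=\L_{\xi_a}\gamma+I_a\gamma$ on $\Omega^1\!\Hh$, and the adjointness $\di^\star=\di_{\Hh}^\star$ on $\Omega^1\!\Hh$ from Lemma \ref{s2l1}(i). For part (i), I would compute $\di_{\Hh}^\star(I_a\dH f)$ by writing $\di_{\Hh}^\star$ as $-\sum_i e_i\lrcorner\nabla^g_{e_i}$ over a horizontal orthonormal frame $\{e_i\}$, using that $\di^\star=\di_{\Hh}^\star$ on horizontal $1$-forms; then $I_a\dH f=\dH f\circ I_a$ has covariant derivative governed by $\nabla I_a$, which on $\Hh$ is controlled by the structure equations. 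Concretely, $\sum_i e_i\lrcorner\nabla^g_{e_i}(I_a\dH f)$ splits into a term $\sum_i (\nabla^g_{e_i}\dH f)(I_ae_i)=\langle \operatorname{Hess}f, \omega_a\rangle$-type expression and a term involving $(\nabla^g_{e_i}I_a)$; the first vanishes by symmetry of the horizontal Hessian against the skew form $\omega_a$ up to a curvature/torsion correction, and the correction produces exactly $-4n\L_{\xi_a}f$. The cleanest route is probably to dualize: $I_a\dH f=\dH f\circ I_a$ corresponds to the vector field $-I_a\grad_{\Hh}f$, and $\di_{\Hh}^\star$ of it is (minus) the horizontal divergence of $I_a\grad_{\Hh}f$; since $I_a$ preserves $g_{\Hh}$ this divergence picks up only the ``trace of $\nabla I_a$'' term, which by \eqref{str-xi}--\eqref{na-xi} equals $4n$ times the $\xi_a$-derivative, giving (i).

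For part (ii), write $\p=\sum_a I_a\circ\L_{\xi_a}$ by definition, so $\di_{\Hh}^\star\p\dH f=\sum_a\di_{\Hh}^\star I_a\L_{\xi_a}\dH f=\sum_a\di_{\Hh}^\star I_a\dH(\L_{\xi_a}f)$ using $[\dH,\L_{\xi_a}]=0$. Now apply part (i) with $f$ replaced by $\L_{\xi_a}f$: each summand becomes $-4n\L_{\xi_a}\L_{\xi_a}f$, and summing over $a$ gives $-4n\sum_a\L^2_{\xi_a}f=4n\,\C f$ by the definition of $\C$. So (ii) is immediate once (i) is in hand. For part (iii), I would start from $\Delta_{\Hh}=\di_{\Hh}\di_{\Hh}^\star+\di_{\Hh}^\star\di_{\Hh}$ and use $\di_{\Hh}^2=$ (curvature term) — more precisely, $\dH\dH f=-\sum_a(\L_{\xi_a}f)\,\d\xi^a|_{\Hh}$-type expression coming from \eqref{Cart}: differentiating \eqref{Cart} gives $0=\d^2=\dH^2+\sum_a\d(\xi^a\wedge\L_{\xi_a})$, and evaluating the $\Omega^2\!\Hh$-component using \eqref{str-xi} (where $\d\xi^a=-2\xi^{bc}+2\omega_a$, so its horizontal part is $2\omega_a$) yields $\dH\dH f=2\sum_a(\L_{\xi_a}f)\,\omega_a$. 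Then $[\Delta_{\Hh},\dH]f=\di_{\Hh}^\star\dH\dH f-\dH\di_{\Hh}^\star\dH f$. The second term is $-\dH(\di_{\Hh}^\star\dH f)$ and, since $\di_{\Hh}^\star\dH f=\Delta_{\Hh}f$ on functions, one needs $\dH\Delta_{\Hh}f$; the first term is $\di_{\Hh}^\star(2\sum_a(\L_{\xi_a}f)\omega_a)$. Expanding $\di_{\Hh}^\star((\L_{\xi_a}f)\omega_a)=-I_a\dH(\L_{\xi_a}f)+(\L_{\xi_a}f)\di_{\Hh}^\star\omega_a$ using $-\omega_a=g_{\Hh}(I_a\cdot,\cdot)$ (so contracting $\dH(\L_{\xi_a}f)$ with $\omega_a$ produces $-I_a\dH(\L_{\xi_a}f)$ up to sign, plus a $\di_{\Hh}^\star\omega_a$ term which vanishes since $\omega_a$ is parallel in the relevant directions / coclosed), one collects $-2\sum_a I_a\dH(\L_{\xi_a}f)=-2\sum_a I_a\L_{\xi_a}(\dH f)=-2\p(\dH f)$ — and comparing with the definition of $\p$ and matching the remaining $\dH\Delta_{\Hh}f$ terms, the identity $[\Delta_{\Hh},\dH]f=2\p(\dH f)$ (up to fixing the overall sign via a direct check on a test function or by invoking Lemma 5.1 of \cite{NS}) follows.

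The main obstacle I anticipate is \emph{getting the constants and signs exactly right} in (i) and (iii): both require carefully tracking the torsion terms in the structure equations \eqref{str-xi}, the factor $4n=\dim_{\mathbb R}\Hh$ entering through the trace $\sum_i \omega_a(e_i,I_be_i)$-type contractions, and the sign conventions $-\omega_a=g_{\Hh}(I_a\cdot,\cdot)$ versus $I_a\alpha=\alpha\circ I_a$. Part (ii) is then purely formal. A safe fallback, which the excerpt explicitly licenses, is to invoke the dimension-$7$ computations in \cite[Lemma 3.3, Lemma 5.1]{NS} and observe that each step there is dimension-independent, the only place the dimension enters being the trace over the $4n$-dimensional horizontal frame, which replaces the constant $4$ (for $n=1$) by $4n$ throughout; this makes (i)--(iii) follow verbatim with $4$ replaced by $4n$.
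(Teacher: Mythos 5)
Your part (ii) is exactly the paper's argument (apply (i) to $\L_{\xi_a}f$ using the $\su(2)$--invariance of $\dH$ and $\di_{\Hh}^{\star}$), but for (i) and (iii) you take genuinely different routes. For (i) the paper does not compute on $M$ at all: it uses the K\"ahler cone identity $\di^{\star_{g_{\q}}}(f\omega_1)=-J_1\di f$ (so $\di^{\star_{g_{\q}}}J_1\di f=0$ since $(\di^{\star_{g_{\q}}})^2=0$), expands $J_1\di f=I_1\dH f+\L_{\xi_3}f\,\xi^2-\L_{\xi_2}f\,\xi^3-r^{-1}\L_{\xi_1}f\,\di r$ via Cartan's formula, and reads off the constant from the cone comparison formula \eqref{co-di}: the $4n$ arises as $(4n+2)-2$, the $-2$ coming from $\di^{\star}(\L_{\xi_3}f\,\xi^2-\L_{\xi_2}f\,\xi^3)=-2\L_{\xi_1}f$. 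Your intrinsic divergence computation can be made to work, but the decisive input is the identity $\di^{\star}\omega_a=4n\,\xi^a$ on $M$, which is \emph{not} just the trace $\sum_i\omega_a(e_i,I_ae_i)=-4n$ over the horizontal frame: it combines $\tfrac12\di^{\star}\di\xi^a=\Ric(\xi_a)^{\flat}=(4n+2)\xi^a$ (Killing plus Einstein) with $\di^{\star}(\xi^b\wedge\xi^c)=-2\xi^a$ from the bracket relations \eqref{su2}. As written, your heuristic ("the trace of $\nabla I_a$ equals $4n$ times the $\xi_a$-derivative") names the answer without identifying this mechanism, and this is precisely where the content of (i) sits; you should either carry out that computation or use the cone identity as the paper does.

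For (iii) the paper instead starts from $\Delta^g\di f=\di\Delta^gf$, computes $\Delta^g(\L_{\xi_a}f\,\xi^a)$ with the Bochner formula, and projects onto $\Omega^1\!\Hh$; your route via $\dH^2f$ is arguably cleaner and does close up: since $\di_{\Hh}\di_{\Hh}^{\star}\dH f=\dH\Delta_{\Hh}f$, one gets $[\Delta_{\Hh},\dH]f=\di_{\Hh}^{\star}\dH^2f$ outright, and $\dH^2f=-2\sum_a(\L_{\xi_a}f)\omega_a$ (note your sign is off), so the claim reduces to $\di_{\Hh}^{\star}(h\omega_a)=-I_a\dH h$. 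Two corrections there: $\omega_a$ is \emph{not} coclosed on $M$ (again $\di^{\star}\omega_a=4n\,\xi^a$); what you need is that this is purely vertical, so $\di_{\Hh}^{\star}\omega_a=0$. Finally, your fallback of quoting \cite[Lemmas 3.3, 5.1]{NS} is misplaced: those lemmas concern the algebraic identities of Lemma \ref{l1s2}, not the differential identities of Lemma \ref{h-diff}, so the "replace $4$ by $4n$" shortcut is not licensed for this statement.
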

\begin{proof}
(i) Because $\di^{\star_{g_{\q}}}(f\omega_1)=-J_1\di\!f$ with respect to the K\"ahler structure $(g_{\q},J_1)$ 
we obtain 
$\di^{\star_{g_{\q}}}\!J_1\di f=0$. At the same time expanding $\di\!f$ according to \eqref{Cart} yields 
$$ J_1 \di\!f=I_1\d_{\Hh}\!f+\L_{\xi_3}f\xi^2-\L_{\xi_2}f\xi^3-\di\!r \wedge r^{-1}\L_{\xi_1}f$$
since $J_1\xi^1=-r^{-1}\di\!r, \ J_1\xi^2=-\xi^3, J_1\xi^3=\xi^2$. Taking \eqref{co-di} into account thus 
leads to 
$\di^{\star}(I_1\dH\!f+\L_{\xi_3}f\xi^2-\L_{\xi_2}f\xi^3)+(4n+2)\L_{\xi_1}f=0$. As $\xi^2$ and $\xi^3$ are co-closed we compute further 
$\di^{\star}(\L_{\xi_3}f\xi^2-\L_{\xi_2}f\xi^3)=-\L_{\xi_2}\L_{\xi_3}f+\L_{\xi_3}\L_{\xi_2}f=-2\L_{\xi_1}f$ and the claim follows from having $\di^{\star}=\di_{\Hh}^{\star}$ on $\Omega^1\!\Hh$. \\
(ii) follows from (i) and the $\su(2)$ invariance of $\d_{\Hh}$ respectively $\di_{\Hh}^{\star}$.\\
(iii) From $\Delta^g(\di\!f)=\di\!\Delta^gf$ and \eqref{Cart} we get 
\begin{equation} \label{prel-pp}
\Delta^g(\d_{\Hh}\!f)+\sum_a\Delta^g(\L_{\xi_a}f\xi^a)=\di\!\Delta^gf.
\end{equation}
As $\xi_a$ are Killing fields and $\Ric^g=(4n+2)g$ calculation using the Bochner formula $\Delta^g=(\nabla^g)^{\star}\nabla^g+\Ric^g$ shows that 
$$\Delta^g(\L_{\xi_a}f\xi^a)=((\Delta^g+8n+4)\L_{\xi_a}f)\xi^a-\grad \L_{\xi_a}f \lrcorner \di\!\xi^a.$$ In particular 
the horizontal projection $(\Delta^g(\L_{\xi_a}f\xi^a))_{\Hh}=-2I_a\d_{\Hh}\!\L_{\xi_a}f$ after using the structure equations \eqref{str-xi}. The claim follows now easily by projecting \eqref{prel-pp} onto $\Omega^1\!\Hh$ whilst taking into account that $(\Delta^g(\d_{\Hh}\!f))_{\Hh}=(\Delta_{\Hh}+\C)\d_{\Hh}\!f$ and $\Delta^gf=\Delta_{\Hh}f+\C f$, as granted by Lemma \ref{s2l1},(ii)\&(iii).
\end{proof}
Based on these preliminaries we can now prove the following 
\begin{propn} \label{est-f1}
Let $(M^{4n+3},g,\xi)$ be a compact $3$-Sasaki manifold. The sub-Laplacian $\Delta_{\Hh}$ acting on $C_m^{\infty}M$ where  $m \geq 1$ satisfies the lower bound 
$$\Delta_{\Hh} \ge  4nm.$$  
For the limiting eigenspaces we have 
\begin{itemize}
\item[(i)] $\ker(\Delta_{\Hh}-4nm)\cap C_m^{\infty}M=\{f \in C_m^{\infty}M : \p(\d_{\Hh}\!f)=(m+2)\d_{\Hh}\!f\}$
\item[(ii)]$\ker(\Delta_{\Hh}-4n) \cap C_1^{\infty}M=0$ if $g$ does not have constant sectional curvature.
\end{itemize}
\end{propn}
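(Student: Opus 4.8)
The strategy is to combine the two horizontal operator identities in Lemma \ref{h-diff} with the integral estimate for $\Delta_{\Hh}$ on horizontal $1$-forms from Proposition \ref{Lco-2}, applied to $\gamma=\d_{\Hh}\!f$. First I would record the elementary consequences of Lemma \ref{h-diff}: on $C_m^{\infty}M$ one has $\Delta_{\Hh}f=\di_{\Hh}^{\star}\di_{\Hh}f$, and by part (iii) together with the self-adjointness of $\p$ and the positivity of $\di_{\Hh}^{\star}\di_{\Hh}$ we get the commutator relation $\langle \Delta_{\Hh}\d_{\Hh}f,\d_{\Hh}f\rangle_{L^2}=\langle \d_{\Hh}\Delta_{\Hh}f,\d_{\Hh}f\rangle_{L^2}+2\langle \p(\d_{\Hh}f),\d_{\Hh}f\rangle_{L^2}$. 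More useful is to test Proposition \ref{Lco-2} against $\gamma=\d_{\Hh}f$: this gives
\begin{equation*}
\langle \Delta_{\Hh}\d_{\Hh}f,\d_{\Hh}f\rangle \geq (4n-2)\langle \p(\d_{\Hh}f),\d_{\Hh}f\rangle -(8n-8)\langle \d_{\Hh}f,\d_{\Hh}f\rangle.
\end{equation*}

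Next I would identify all three pairings in that inequality in terms of $f$ alone, using Lemma \ref{h-diff}. We have $\langle \d_{\Hh}f,\d_{\Hh}f\rangle=\langle \Delta_{\Hh}f,f\rangle=\langle \di_{\Hh}^{\star}\di_{\Hh}f,f\rangle$; by part (ii), $\langle \p(\d_{\Hh}f),\d_{\Hh}f\rangle=\langle \di_{\Hh}^{\star}\p\d_{\Hh}f,f\rangle=4n\langle \C f,f\rangle=4n\,m(m+2)\|f\|^2$ on $C_m^{\infty}M$; and by part (iii), $\langle\Delta_{\Hh}\d_{\Hh}f,\d_{\Hh}f\rangle=\langle\d_{\Hh}\Delta_{\Hh}f,\d_{\Hh}f\rangle+2\langle\p\d_{\Hh}f,\d_{\Hh}f\rangle=\|\Delta_{\Hh}f\|^2+8n\,m(m+2)\|f\|^2$ after using $\langle\d_{\Hh}\Delta_{\Hh}f,\d_{\Hh}f\rangle=\|\Delta_{\Hh}f\|^2$. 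Substituting these into the displayed inequality and writing $\lambda:=\langle\Delta_{\Hh}f,f\rangle/\|f\|^2$, $\|\Delta_{\Hh}f\|^2\geq\lambda\langle\Delta_{\Hh}f,f\rangle=\lambda^2\|f\|^2$ is too weak; instead I keep $\|\Delta_{\Hh}f\|^2$ as is and obtain, for an eigenfunction with $\Delta_{\Hh}f=\lambda f$, the scalar inequality $\lambda^2+8n\,m(m+2)\geq (4n-2)\cdot 4n\,m(m+2)-(8n-8)\lambda$, i.e. $\lambda^2+(8n-8)\lambda-16n(n-1)m(m+2)\geq 0$; solving the quadratic in $\lambda$ gives $\lambda\geq -(4n-4)+\sqrt{(4n-4)^2+16n(n-1)m(m+2)}$, and one checks this lower bound equals $4nm$ exactly when... actually a cleaner route avoids the quadratic: I would instead feed the chain of identities directly, reading Proposition \ref{Lco-2} as $\|\Delta_{\Hh}f\|^2+8n\,m(m+2)\|f\|^2\geq (4n-2)\cdot 4n\,m(m+2)\|f\|^2-(8n-8)\langle\Delta_{\Hh}f,f\rangle$, then for a $\lambda$-eigenfunction cancel and simplify to $(\lambda-4nm)(\lambda+4nm+8n-8)\geq 0$; since $\lambda>0$ the second factor is positive, forcing $\lambda\geq 4nm$. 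This proves the lower bound, and since the spectrum is discrete the bound passes from eigenfunctions to all of $C_m^{\infty}M$.

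For the limiting eigenspace (i): equality $\lambda=4nm$ holds iff equality holds in Proposition \ref{Lco-2} applied to $\gamma=\d_{\Hh}f$, which by that proposition means $\D^{\Hh}_X\d_{\Hh}f=0$ for all $X\in\Hh$; tracing back the computation of $\|\D^{\Hh}\alpha\|^2$ with $\alpha=r^2\d_{\Hh}f$ and using part (ii) of Lemma \ref{no-ver}, the vanishing forces $(\C-6\p+16)\d_{\Hh}f$ to combine with the eigenvalue relation to give $\p(\d_{\Hh}f)=(m+2)\d_{\Hh}f$; conversely if $\p(\d_{\Hh}f)=(m+2)\d_{\Hh}f$ then applying $\di_{\Hh}^{\star}$ and Lemma \ref{h-diff}(ii) yields $4n\C f=(m+2)\Delta_{\Hh}f$, so $\Delta_{\Hh}f=4nm\,f$ on $C_m^{\infty}M$. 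I would write this equivalence out carefully — the forward direction needs the relation $\C=\p^2-2\p$ from Lemma \ref{l1s2}(iii) to turn the condition coming from the vanishing of $\D^{\Hh}$ into a clean eigenvalue equation for $\p$ on the range of $\d_{\Hh}$. For (ii): when $m=1$ the condition in (i) reads $\p(\d_{\Hh}f)=3\d_{\Hh}f$ on $C_1^{\infty}M$; the plan is to invoke the well-known classification of structure-preserving Killing fields on Sasaki--Einstein manifolds (referenced in the outline, to be recalled in Section \ref{tri-sectn}): such an $f$ produces, via the cone, a parallel or special vector field that, using the condition $\Delta_{\Hh}f=4nf$ together with $\Delta^gf=\Delta_{\Hh}f+\C f=(4n+3)f$, makes $f$ an eigenfunction of $\Delta^g$ for the eigenvalue $4n+3$, which by Obata-type rigidity on the Einstein manifold $(M,g)$ with $\Ric^g=(4n+2)g$ forces constant sectional curvature; contrapositively $\ker(\Delta_{\Hh}-4n)\cap C_1^{\infty}M=0$.

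The main obstacle is the bookkeeping in part (i): correctly extracting from "$\D^{\Hh}_X\d_{\Hh}f=0$ for all $X$" the precise scalar relation $\p(\d_{\Hh}f)=(m+2)\d_{\Hh}f$ rather than some weaker consequence. This requires re-examining the norm expansion $\|\D\alpha\|^2=4\|\D_{\partial_r}\alpha\|^2+16\|\gamma\|^2+\|\D^{\Hh}\alpha\|^2$ in the proof of Proposition \ref{Lco-2}, isolating what $\|\D^{\Hh}\alpha\|^2=0$ says after subtracting off the (now explicitly evaluated) contribution of $\|\D_{\partial_r}\alpha\|^2$ from Lemma \ref{no-ver}(ii), and matching it against the equality case of the displayed estimate. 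The second potential difficulty is that part (ii) genuinely relies on an external rigidity input (the Sasaki--Einstein Killing-potential picture plus Obata), so I would need to state precisely which form of that result I am quoting and verify the eigenvalue $4n+3$ of $\Delta^g$ is indeed the Obata-extremal one for $\Ric^g=(4n+2)g$, namely $\lambda_1(\Delta^g)=\tfrac{\scal}{4n+2}=4n+3$ with equality only on the round sphere.
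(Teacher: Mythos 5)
There is a genuine gap in your argument for the central lower bound $\Delta_{\Hh}\geq 4nm$. For a $\lambda$-eigenfunction $f\in C_m^{\infty}M$ you correctly derive, from Proposition \ref{Lco-2} tested on $\gamma=\d_{\Hh}f$ and the three pairings you compute, the quadratic inequality $\lambda^2+(8n-8)\lambda-16n(n-1)m(m+2)\geq 0$; but the factorisation you then assert, $(\lambda-4nm)(\lambda+4nm+8n-8)\geq 0$, is false. Expanding it gives constant term $-4nm(4nm+8n-8)=-16n^2m^2-32n^2m+32nm$, whereas the actual constant term is $-16n(n-1)m(m+2)=-16n^2m^2+16nm^2-32n^2m+32nm$; the two differ by $16nm^2$. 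In fact your quadratic evaluated at $\lambda=4nm$ equals $+16nm^2>0$, so its positive root lies strictly \emph{below} $4nm$ and the inequality is consistent with $\lambda<4nm$: Proposition \ref{Lco-2} applied to $\d_{\Hh}f$ is simply not strong enough to yield the claimed bound. The failure is starkest for $n=1$, where your inequality collapses to $\lambda^2\geq 0$ and carries no information. Consequently the equality analysis in your part (i), tied to the equality case $\D^{\Hh}\d_{\Hh}f=0$ of Proposition \ref{Lco-2}, does not go through either; that proposition is the right tool for the \emph{refined} estimate of Proposition \ref{est2}, not for this one.

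The paper's proof uses none of the cone machinery here. For $\Delta_{\Hh}f=\nu f$ with $f\in C_m^{\infty}M$ it computes the exact identity
\begin{equation*}
\bigl\Vert \p(\d_{\Hh}f)-(m+2)\,\d_{\Hh}f\bigr\Vert^2_{L^2}\;=\;2(m+1)(m+2)\,(\nu-4nm)\,\Vert f\Vert^2_{L^2},
\end{equation*}
using only $\p^2-2\p=\C$ (Lemma \ref{l1s2}, (iii)), $\di_{\Hh}^{\star}\p\,\di_{\Hh}=4n\C$ (Lemma \ref{h-diff}, (ii)) and the $\su(2)$-invariance of $\d_{\Hh}$, which keeps $\d_{\Hh}f$ in $\ker(\C-m(m+2))$. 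Positivity of the left-hand side gives $\nu\geq 4nm$ at once, with equality exactly when $\p(\d_{\Hh}f)=(m+2)\d_{\Hh}f$, which is part (i); your converse inclusion for (i) (apply $\di_{\Hh}^{\star}$ and Lemma \ref{h-diff}, (ii)) is fine as written. Your part (ii) --- $\ker(\Delta_{\Hh}-4n)\cap C_1^{\infty}M\subseteq\ker(\Delta^g-(4n+3))$ together with Obata's theorem for $\Ric^g=(4n+2)g$ --- is correct and coincides with the paper's argument. The missing idea is thus that $\nu-4nm$ is proportional to the square norm of $\p(\d_{\Hh}f)-(m+2)\d_{\Hh}f$, so that both the bound and the description of the limiting eigenspace come from a single completing-the-square computation rather than from the integral estimate on horizontal $1$-forms.
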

\begin{proof}
Pick $f \in C^\infty_m M$ with $\Delta_{\Hh} f = \nu f$ and compute the $L^2$ norm
$$
\| \p(\d_{\Hh} f)  - (m+2)  \d_{\Hh} f \|^2_{L^2} \;=\; \| \p(\d_{\Hh} f) \|^2_{L^2} \;+\;  (m+2)^2 \|  \d_{\Hh} f  \|^2_{L^2} \;-\; 2(m+2) \langle \p(\d_{\Hh} f), \d_{\Hh} f\rangle_{L^2} 
$$
\vspace{-.6cm}
\begin{eqnarray*}
&=& \langle \p^2(\d_{\Hh} f),  \d_{\Hh} f \rangle_{L^2} \;+\; (m+2)^2 \nu \|   f  \|^2_{L^2} \;-\; 8n(m+2) \langle \C f, f \rangle_{L^2} \\[1ex]
&=& \langle \C (\d_{\Hh} f),  \d_{\Hh} f \rangle_{L^2}+2\langle \p(\d_{\Hh} f), \d_{\Hh} f\rangle_{L^2}+(m+2)^2 \nu \|   f  \|^2_{L^2} \;-\; 8n(m+2) \langle \C f, f \rangle_{L^2} \\[1ex]
&=& [m(m+2)\nu + 8nm(m+2) + (m+2)^2\nu - 8nm(m+2)^2] \Vert f \Vert^2_{L^2} \\[1ex]
&=& 2(m+1)(m+2)(\nu - 4nm )\|   f  \|^2_{L^2} \ .
\end{eqnarray*}
This calculation is mainly based on the identities $\p^2-2\p=\C$ on $\Omega^1\!\Hh$ as well as $\di_{\Hh}^{\star}\p\di_{\Hh}=4n\C$ on
$C^{\infty}M$ from Lemma \ref{l1s2},(iii) respectively Lemma \ref{h-diff},(ii). By positivity, $\nu \geq 4nm$ and $\p(\d_{\Hh}\!f)=(m+2)\d_{\Hh}\!f$ in case of equality. As 
$\Delta^g=\Delta_{\Hh}+\C$ on functions, part (ii)  follows from $\ker(\Delta_{\Hh}-4n) \cap C_1^{\infty}M \subseteq \ker(\Delta^g-(4n+3))$ and Obata's theorem for the Einstein metric $g$.
\end{proof}
As explained next, the estimate in Proposition \ref{est-f1} generalises some results previously only known for quaternion K\"ahler geometry. 
\begin{rem} \label{reg-case}
Assume that the $3$-Sasaki manifold is regular, i.e. 
$(M^{4n+3}, g)$  fibres as a principal $\mathrm{Sp}(1)$ or $\mathrm{SO}(3)$ bundle over a  quaternion K\"ahler manifold
$(Q^{4n}, g_Q)$. In this situation we explain how the estimate in Proposition \ref{est-f1}, with $m=2l$, can be derived from previous work \cite{SW,gu1}. 
First we record the well-known identification 
\begin{equation} \label{idl}
C^\infty_{\pi_l}(M, U_l\otimes \CM) =C^\infty_{\pi_l}(M, \Sym^{2l} \CM^2) \cong \Gamma(\Sym^{2l} H) 
\end{equation}
where $H$ is the locally defined vector bundle on the quaternion K\"ahler manifold $Q$ associated to the standard 
representation of $\mathrm{Sp}(1)$ on $\CM^2$. Recall that the bundle $\Sym^{2l} H$ is globally defined on $Q$.
Under \eqref{idl} the horizontal Laplacian $\Delta_{\Hh} = \Delta^g - \C$ on functions in  
$C^\infty_{\pi_l}(M, U_l\otimes \CM) $ corresponds to $\nabla^{\star}\nabla$ on sections of $\Sym^{2l} H$ (see \cite[Proposition 5.6]{BGV}). 
Here $\nabla$ is the connection induced by the Levi-Civita connection. On sections of $\Sym^{2l} H$ we also have the standard Laplacian  $\Delta_{\pi_l} := \nabla^{\star}\nabla + q(R)$, where 
$q(R)$ is a curvature term which acts as  $\tfrac{2l(1+l)\scal_{g_Q}}{4n(n+1)}\,  \Id = 8l(l+1) \id$ 
(see \cite{SW}[Lemma 3.4]); note that the definition of $q(R)$ here differs by a factor of $2$ compared to that reference. The estimate 
$
\Delta_{\pi_l} \ge \tfrac{l(n+1+l)}{2n(n+2)} \scal_{g_Q} = 8l(n+1+l)
$ 
proved in \cite[Lemma 3.1]{gu1} thus leads to $
\Delta_{\Hh}  \ge 8l(n+1 +l) -8l(l+1) = 8nl$
on $C^\infty_{\pi_l}(M, U_l \otimes \bbC)$. The same estimate for $\Delta_{\Hh}$ acting on scalar valued functions in $C_{2l}^{\infty}M$ can be now recovered with the aid of the surjection in \eqref{embd1}.
\end{rem}

In order to prove Theorem \ref{main1} we need a refined lower bound for $\Delta_{\Hh}$ that is a lower bound for $\Delta_{\Hh}$ acting on the $L^2$ orthogonal complement of 
$\ker(\Delta_{\Hh}-4nm)\cap C_m^{\infty}M$ within $C_m^{\infty}M$. To produce such an estimate a few preliminaries are needed.
\begin{lemma} \label{1f}
The following hold on $\Omega^1\!\Hh$
\begin{itemize}
\item[(i)] $[\Delta_{\Hh},\di_{\Hh}^{\star} ]=- 2 \di_{\Hh}^{\star}  \circ \p $
\item[(ii)]  $[\Delta_{\Hh}+\C,I_a]=0$
\item[(iii)] $[\Delta_{\Hh},\p]=0$.
\end{itemize}
\end{lemma}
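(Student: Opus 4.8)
The three identities in Lemma \ref{1f} are all ``commutator'' statements relating $\Delta_{\Hh}$, $\di_{\Hh}^{\star}$, $\C$, $\p$ and the $I_a$ on $\Omega^1\!\Hh$. The strategy is to derive them in sequence, each feeding into the next, using only the structural facts already established: the $\su(2)$-invariance of $\d_{\Hh}$ and $\di_{\Hh}^{\star}$ (hence of $\C$), the algebraic relations of Lemma \ref{l1s2} (in particular $\p^2-2\p=\C$ and $[\C-2\p,I_a]=0$), the Leibniz/commutator rule $[\Delta_{\Hh},\d_{\Hh}]f=2\p(\d_{\Hh}\!f)$ from Lemma \ref{h-diff}(iii), and the fact that $\Delta^g=\Delta_{\Hh}+\C$ intertwines with everything coming from $\d_{\Hh}$.

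\textbf{Step (i).} I would obtain $[\Delta_{\Hh},\di_{\Hh}^{\star}]=-2\di_{\Hh}^{\star}\circ\p$ by taking the formal $L^2$-adjoint of a companion identity on $1$-forms, namely $[\Delta_{\Hh},\d_{\Hh}]=2\p\circ\d_{\Hh}$ viewed as an operator identity $\Omega^1\!\Hh\to\Omega^2\!\Hh$ (the function-level version is Lemma \ref{h-diff}(iii); the same Bochner/structure-equation computation extends to $1$-forms, or one argues directly that $\Delta_{\Hh}$ fails to commute with $\d_{\Hh}$ precisely by the zero-order term $2\p$). Since $\Delta_{\Hh}$ is self-adjoint and $\p$ is self-adjoint on $\Omega^1\!\Hh$ (stated in section \ref{3sas-p}), adjoining $[\Delta_{\Hh},\d_{\Hh}]=2\p\,\d_{\Hh}$ gives $[\di_{\Hh}^{\star},\Delta_{\Hh}]=2\di_{\Hh}^{\star}\p$, i.e. the claim. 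One caveat: to run this cleanly I must check that $\p$ commutes with $\d_{\Hh}$ on the relevant space, or absorb the correction; this is controlled because $\p$ is built from $I_a$ and $\L_{\xi_a}$, and $\d_{\Hh}$ is $\su(2)$-invariant, so only the failure of $\d_{\Hh}$ to commute with the $I_a$ enters, and that is again algebraic via \eqref{Lie-osI}.

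\textbf{Steps (ii) and (iii).} For (ii), $[\Delta_{\Hh}+\C,I_a]=0$: since $\Delta_{\Hh}+\C=\Delta^g$ on $\Omega^1\!\Hh$ by Lemma \ref{s2l1}(ii), the point is to show $\Delta^g$ commutes with the endomorphisms $I_a$ acting on horizontal $1$-forms. One writes $\Delta^g=(\nabla^g)^{\star}\nabla^g+\Ric^g=(\nabla^g)^{\star}\nabla^g+(4n+2)$, so it suffices to commute $I_a$ past the connection Laplacian; the obstruction is $\nabla^g I_a$, which by the $3$-Sasaki structure equations \eqref{str-xi} is expressed purely in terms of the $\xi^b$ and $\omega_c$, and the resulting correction terms collapse — equivalently, one can deduce (ii) from Lemma \ref{l1s2}(iv) $[\C-2\p,I_a]=0$ together with the identity $\p\circ I_a+I_a\circ\p=4I_a-2\L_{\xi_a}$ of Lemma \ref{l1s2}(ii) and the known commutators of $\L_{\xi_a}$ with $I_b$, rewriting $\Delta_{\Hh}+\C=\Delta_{\Hh}+\p^2-2\p$ and checking the bracket with $I_a$ vanishes. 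Then (iii) $[\Delta_{\Hh},\p]=0$ follows immediately: $\p$ is a linear combination $\sum_a I_a\circ\L_{\xi_a}$; the $\L_{\xi_a}$ commute with $\Delta_{\Hh}$ by $\su(2)$-invariance, and combining (ii) with $[\Delta_{\Hh},\C]=0$ (again $\su(2)$-invariance) gives $[\Delta_{\Hh},I_a]=-[\C,I_a]=[\C-2\p,I_a]-2[\p,I_a]$; feeding in Lemma \ref{l1s2}(iv) and (ii) reduces $[\Delta_{\Hh},\p]$ to an expression in $[\L_{\xi_a},I_b]$ and $[\Delta_{\Hh},\L_{\xi_a}]=0$ that cancels.

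\textbf{Main obstacle.} The genuine work is in Step (i): establishing the $1$-form version of the commutator $[\Delta_{\Hh},\d_{\Hh}]=2\p\circ\d_{\Hh}$, since the function-level statement in Lemma \ref{h-diff}(iii) does not literally cover $\d_{\Hh}$ acting on $\Omega^1\!\Hh$. I expect this to require redoing the Bochner-type computation of Lemma \ref{h-diff}(iii) on $1$-forms — tracking the extra curvature and torsion terms produced by $\sum_a\xi^a\wedge\L_{\xi_a}$ in \eqref{Cart} and using $\Ric^g=(4n+2)g$ together with \eqref{str-xi} — or, more economically, deriving (i) directly from (iii) and the already-known relation $\di_{\Hh}^{\star}\p\di_{\Hh}=4n\C$ by testing against $\d_{\Hh}$ of functions and exploiting density. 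Either route is a finite algebraic manipulation once the right intertwining is identified; steps (ii) and (iii) are then essentially corollaries of Lemma \ref{l1s2} and $\su(2)$-invariance.
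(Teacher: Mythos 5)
Your outline of (iii) is sound, but both (i) and (ii) contain genuine gaps, and in each case the obstacle you identify as ``the real work'' is either self-inflicted or left undone.

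For (i), the identity concerns $\di_{\Hh}^{\star}$ acting on $\Omega^1\!\Hh$, i.e.\ an operator $\Omega^1\!\Hh \to C^{\infty}M$, and its formal adjoint is $\d_{\Hh}$ acting on \emph{functions}. So the correct move is to dualise the function-level identity of Lemma \ref{h-diff}(iii) as it stands: for $\gamma \in \Omega^1\!\Hh$ and $f \in C^{\infty}M$ one has $\langle [\Delta_{\Hh},\di_{\Hh}^{\star}]\gamma, f\rangle_{L^2} = -\langle \gamma, [\Delta_{\Hh},\d_{\Hh}]f\rangle_{L^2} = -2\langle \gamma, \p(\d_{\Hh}f)\rangle_{L^2} = -2\langle \di_{\Hh}^{\star}\p\gamma, f\rangle_{L^2}$, using self-adjointness of $\Delta_{\Hh}$ and $\p$. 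This is exactly the paper's one-line proof (``dualising part (iii) in Lemma \ref{h-diff}''). Your proposal instead dualises a hypothetical identity $[\Delta_{\Hh},\d_{\Hh}]=2\p\circ\d_{\Hh}$ on $\Omega^1\!\Hh \to \Omega^2\!\Hh$; that would produce a statement about $\di_{\Hh}^{\star}$ acting on \emph{two}-forms, not the claimed one, and you rightly flag it as unproven. The fallback you offer --- testing against $\d_{\Hh}$ of functions ``and exploiting density'' --- does not work either, since exact horizontal $1$-forms are not dense in $\Omega^1\!\Hh$ (they are $L^2$-orthogonal to all coclosed horizontal $1$-forms). So, as written, your argument for (i) either proves a different statement or rests on an unestablished lemma, even though the fix is a one-line re-indexing of the adjunction.

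For (ii) the content is genuinely analytic and neither of your two routes supplies it. The ``equivalent'' algebraic route is circular: Lemma \ref{l1s2} only constrains the zero-order operators $\p, \C, I_a, \L_{\xi_a}$ and carries no information whatsoever about $[\Delta_{\Hh}, I_a]$, so rewriting $\C = \p^2 - 2\p$ cannot yield $[\Delta_{\Hh}+\C, I_a]=0$. The Bochner route could in principle work, but commuting $I_a$ past $(\nabla^g)^{\star}\nabla^g$ and checking that the $\nabla^g I_a$ corrections cancel is precisely the computation to be done, and you only assert that the terms ``collapse''; note also that Lemma \ref{s2l1}(ii) gives $(\Delta^g\alpha)_{\Hh} = (\Delta_{\Hh}+\C)\alpha$, not $\Delta^g = \Delta_{\Hh}+\C$ on $\Omega^1\!\Hh$, since $\Delta^g$ does not preserve horizontality of $1$-forms, so the projection must be carried along. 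The paper avoids all of this by a cone argument (as in \cite[Lemma 4.2]{NS}): on the Ricci-flat K\"ahler cone $(CM, g_{\q}, J_a)$ the form Laplacian commutes with $J_a$ by the K\"ahler identities, and this is pushed down to $M$ via the comparison formula \eqref{lapl-c}. Once (ii) is granted, your derivation of (iii) is correct and essentially the paper's: $[\Delta_{\Hh},\p]=\sum_a [\Delta_{\Hh},I_a]\L_{\xi_a}$ cancels using (ii), the identities of Lemma \ref{l1s2} and the $\su(2)$-invariance of $\Delta_{\Hh}$ and $\p$.
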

\begin{proof}
(i) follows by dualising part (iii) in Lemma \ref{h-diff}. Part (ii) is proved by the same cone argument as the one used in dimension $7$ in \cite[Lemma 4.2]{NS}. The claim in (iii) follows easily by applying the Lie derivatives $\L_{\xi_a}$ in (ii), since the Casimir operator $\C$ is $\su(2)$ invariant.
\end{proof}
Next we describe some of the features of the representation of $\su(2)$ on $\Omega^1\!\Hh$, in analogy with the case of functions treated at the beginning of this section. Define the $\su(2)$ invariant subspaces $\Omega^1_k{\Hh}:=\Omega^1\!\Hh \cap \ker(\p-k)$ for $k \in \mathbb{Z}$ and denote by $\Omega_{B}^1\Hh$ the space of basic, i.e. $\su(2)$ invariant, horizontal $1$-forms. By (iii) in Lemma \ref{1f} the weighted spaces $\Omega^1_{k}\!\Hh, k \in \mathbb{Z}$ are preserved by $\Delta_{\Hh}$. Observe that 
the identity $\C=\p^2-2\p$ easily entails  
\begin{equation*}
\begin{split}
&\Omega^1\!\Hh \cap \ker(\C-m(m+2))=\Omega^1_{-m}\!\Hh \oplus\  \Omega^1\!_{m+2}\!\Hh, \ m \in \mathbb{N}\\
&\Omega^1_{1}\!\Hh=\Omega^1_{2}\!\Hh=0, \ \Omega^1_0\!\Hh=\Omega_{B}^1\!\Hh .
\end{split}
\end{equation*}
\begin{rem} \label{v-eq}
Note that some of the spaces $\Omega^1_k\Hh$ may vanish identically. The group $\Aut(M,g)$ acts on the spaces
$\Omega^1\Hh \cap \ker(\C-m(m+2))$; in case $\SO(3) \subseteq 
\Aut(M,g)$ we know that the representations $(\rho_l,V_l)$ of $\su(2)$ do not induce representations 
of $\SO(3)$ hence it is easy to conclude that $\Omega^1_{k}\Hh=0$ for $k$ odd. This type of argument 
will also be used in the proof of Theorem \ref{main3}.
\end{rem}
The behaviour of the spaces $\Omega^1_k\Hh$ under the action of $I_1,I_2,I_3$ is captured in the following 
\begin{lemma}\label{sp1}
For any $m \in \mathbb N$ the space $ \Omega^1_{-m}\!\Hh \oplus\ \Omega_{m+4}^1\!\Hh  $ is $\sp(1)$-invariant, that is invariant under the complex structures $I_a$, for
$a=1,2,3$. 
In particular, whenever $\alpha \in  \Omega^1_{-m}\!\Hh $, the component of $I_a \alpha$ in $ \Omega^1_{-m}\!\Hh $  is given by
$
(I_a \alpha)_{-m} = \tfrac{1}{m+2} \L_{\xi_a} \alpha
$
and similarly
$
(I_a \beta)_{m+4} = - \tfrac{1}{m+2} \L_{\xi_a} \beta
$
for $\beta \in \Omega^1_{m+4}$.
Moreover, the map
$$\mathbf{a}_{-m} : \Omega^1_{-m}\!\Hh  \rightarrow \Omega^1_{m+4}\!\Hh, \ \mathbf{a}_{-m} (\alpha) :=  (I_a \alpha)_{m+4}
$$
is injective and satisfies 
\begin{equation} \label{D-a}
\Delta_{\Hh} \circ   \mathbf{a}_{-m}  =  \mathbf{a}_{-m} \circ (\Delta_{\Hh} - 4m -8) .
\end{equation}
\end{lemma}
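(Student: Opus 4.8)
The statement has three parts: (a) $\sp(1)$-invariance of $\Omega^1_{-m}\!\Hh \oplus \Omega^1_{m+4}\!\Hh$, together with the explicit formulas for the projections $(I_a\alpha)_{-m}$ and $(I_a\beta)_{m+4}$; (b) injectivity of $\mathbf{a}_{-m}$; (c) the intertwining relation \eqref{D-a}. The key algebraic input is Lemma \ref{l1s2}(ii), $\p \circ I_a + I_a \circ \p = 4I_a - 2\L_{\xi_a}$, which I rewrite as $(\p + k + \ell)I_a = I_a(k - \p) + 4I_a - 2\L_{\xi_a}$ type identities to control how $I_a$ shifts $\p$-weights. The plan is first to compute, for $\alpha \in \Omega^1_{-m}\!\Hh$ (so $\p\alpha = -m\alpha$), the element $\p(I_a\alpha)$: by Lemma \ref{l1s2}(ii) we get $\p(I_a\alpha) = -I_a\p\alpha + 4I_a\alpha - 2\L_{\xi_a}\alpha = (m+4)I_a\alpha - 2\L_{\xi_a}\alpha$. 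Since $\L_{\xi_a}$ commutes with $\C = \p^2 - 2\p$ (it is $\su(2)$-invariant) and $\L_{\xi_a}$ maps $\Omega^1_{-m}\!\Hh$ into the Casimir eigenspace $\Omega^1_{-m}\!\Hh \oplus \Omega^1_{m+4}\!\Hh$, I can write $\L_{\xi_a}\alpha = u + v$ with $u \in \Omega^1_{-m}\!\Hh$, $v \in \Omega^1_{m+4}\!\Hh$. Then applying $\p$ to $\p(I_a\alpha) = (m+4)I_a\alpha - 2(u+v)$ and comparing weights forces $I_a\alpha$ to lie in $\Omega^1_{-m}\!\Hh \oplus \Omega^1_{m+4}\!\Hh$ as well, which gives part (a); matching the $\Omega^1_{-m}$-components in $\p(I_a\alpha) = (m+4)(I_a\alpha)_{-m} + (m+4)(I_a\alpha)_{m+4} - 2u - 2v$ against $\p(I_a\alpha) = -m(I_a\alpha)_{-m} + (m+4)(I_a\alpha)_{m+4}$ yields $(m+4)(I_a\alpha)_{-m} - 2u = -m(I_a\alpha)_{-m}$, i.e. $(I_a\alpha)_{-m} = \tfrac{1}{m+2}u = \tfrac{1}{m+2}(\L_{\xi_a}\alpha)_{-m}$; but since $\p(I_a\alpha) - (m+4)I_a\alpha = -2\L_{\xi_a}\alpha$ has only weights $-m$ and $m+4$ and is $-2(m+2)(I_a\alpha)_{-m} + 0$, one actually gets $\L_{\xi_a}\alpha = (m+2)(I_a\alpha)_{-m}$ outright, establishing the projection formula. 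The formula for $(I_a\beta)_{m+4}$ with $\beta\in\Omega^1_{m+4}$ follows by the symmetric computation (or by applying $I_a$ again and using $I_a^2 = -\id$).

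For part (b), injectivity of $\mathbf{a}_{-m}$: suppose $\alpha \in \Omega^1_{-m}\!\Hh$ with $(I_a\alpha)_{m+4} = 0$ for some fixed $a$; then $I_a\alpha = (I_a\alpha)_{-m} = \tfrac{1}{m+2}\L_{\xi_a}\alpha$, so $I_a\alpha \in \Omega^1_{-m}\!\Hh$. Applying $I_a$ once more, $-\alpha = I_a^2\alpha = \tfrac{1}{m+2}I_a\L_{\xi_a}\alpha = \tfrac{1}{m+2}\L_{\xi_a}(I_a\alpha) + \tfrac{1}{m+2}(I_a\L_{\xi_a} - \L_{\xi_a}I_a)\alpha$; here I use \eqref{Lie-osI} in the form $\L_{\xi_a}I_a = I_a\L_{\xi_a}$ (the diagonal bracket vanishes) to see $I_a$ and $\L_{\xi_a}$ commute on $\Omega^1\!\Hh$, giving $-\alpha = \tfrac{1}{(m+2)^2}\L_{\xi_a}^2\alpha$, so $\L_{\xi_a}^2\alpha = -(m+2)^2\alpha$. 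Summing over $a$ and invoking $\C = -\sum\L_{\xi_a}^2$ together with $\C = \p^2 - 2\p = m(m+2)$ on $\Omega^1_{-m}\!\Hh$ gives a contradiction unless $\alpha = 0$: indeed $3(m+2)^2 \neq m(m+2)$ for $m \in \mathbb{N}$, but more carefully one should only need the single index computation combined with the fact that $\alpha\in\Omega^1_{-m}\!\Hh$ forces a definite Casimir eigenvalue — so I will instead argue that $I_a\alpha \in \Omega^1_{-m}\!\Hh$ for all three $a$ (by a small extra argument using $\su(2)$-equivariance, since the three $I_a$ are permuted by the $\L_{\xi_b}$-action), whence the two-dimensional real $\su(2)$-subrepresentation generated by $\alpha$ would sit inside a single weight space, which is impossible for a nontrivial irreducible $V_\ell$. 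That disposes of injectivity.

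For part (c), the intertwining relation, I use that $\mathbf{a}_{-m}(\alpha) = I_a\alpha - (I_a\alpha)_{-m} = I_a\alpha - \tfrac{1}{m+2}\L_{\xi_a}\alpha$. Now I apply $\Delta_{\Hh}$ to both sides and push it through. By Lemma \ref{1f}(iii), $\Delta_{\Hh}$ commutes with $\p$, hence preserves weight spaces. By Lemma \ref{1f}(ii), $[\Delta_{\Hh} + \C, I_a] = 0$, so $\Delta_{\Hh}(I_a\alpha) = I_a\Delta_{\Hh}\alpha + (I_a\C - \C I_a)\alpha = I_a\Delta_{\Hh}\alpha + I_a(m(m+2)\alpha) - \C(I_a\alpha)$. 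Writing $I_a\alpha = (I_a\alpha)_{-m} + \mathbf{a}_{-m}(\alpha)$ and noting $\C$ acts as $m(m+2)$ on the first summand and as $(m+4)(m+6)$ on the second, this becomes $\Delta_{\Hh}(I_a\alpha) = I_a\Delta_{\Hh}\alpha + m(m+2)(I_a\alpha)_{-m} - (m+4)(m+6)\mathbf{a}_{-m}(\alpha)$ — wait, that needs $\C I_a\alpha$, which expands term by term on the two weight pieces. Since $\Delta_{\Hh}$ also commutes with $\L_{\xi_a}$ (the operators $\di_{\Hh}, \di_{\Hh}^\star$ are $\su(2)$-invariant), $\Delta_{\Hh}(\L_{\xi_a}\alpha) = \L_{\xi_a}\Delta_{\Hh}\alpha$. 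Projecting the resulting identity onto $\Omega^1_{m+4}\!\Hh$ and using that $\Delta_{\Hh}$ preserves this space, after collecting the constants $m(m+2)$ versus $(m+4)(m+6)$ I expect precisely $\Delta_{\Hh}\mathbf{a}_{-m}(\alpha) = \mathbf{a}_{-m}(\Delta_{\Hh}\alpha) + \big(m(m+2) - (m+4)(m+6) + c\big)\mathbf{a}_{-m}(\alpha)$ for a correction constant $c$ coming from the $\L_{\xi_a}$-term's weight bookkeeping, and the arithmetic $m(m+2) - (m+4)(m+6)$ should be massaged to $-(4m+8)$ after accounting for that correction. \textbf{The main obstacle} I anticipate is getting the constant in \eqref{D-a} exactly right: there are several competing Casimir eigenvalues ($m(m+2)$ on $\Omega^1_{-m}$, $(m+4)(m+6)$ on $\Omega^1_{m+4}$) and the cross-terms from $[\Delta_{\Hh}+\C, I_a]=0$ interacting with the $\L_{\xi_a}$-correction, so I would need to do the weight-projection bookkeeping very carefully rather than by a quick symmetry argument — the sign conventions in Lemma \ref{l1s2}(ii) and \eqref{Lie-osI} are exactly where an error would creep in.
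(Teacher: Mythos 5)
Your overall strategy --- using Lemma \ref{l1s2}(ii) to locate $I_a\alpha$ in $\Omega^1_{-m}\!\Hh\oplus\Omega^1_{m+4}\!\Hh$ and read off the projections, then deriving \eqref{D-a} from $[\Delta_{\Hh}+\C,I_a]=0$ together with $[\Delta_{\Hh},\p]=0$ --- is exactly the paper's. Part (a) is essentially fine; note only that by Lemma \ref{l1s2}(i) the operator $\L_{\xi_a}$ commutes with $\p$, so $\L_{\xi_a}\alpha$ lies entirely in $\Omega^1_{-m}\!\Hh$ (your auxiliary component $v\in\Omega^1_{m+4}\!\Hh$ is zero), which simplifies the weight comparison. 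The genuine problem is in part (c): the Casimir eigenvalue on $\Omega^1_{m+4}\!\Hh$ is $(m+2)(m+4)$, not $(m+4)(m+6)$. From $\C=\p^2-2\p$ one gets $\C=k(k-2)$ on $\Omega^1_k\!\Hh$, so $\Omega^1_{m+4}\!\Hh$ sits inside $\ker(\C-(m+2)(m+4))$; it is the positive-weight piece for $m'=m+2$ in the displayed decomposition $\Omega^1\!\Hh\cap\ker(\C-m'(m'+2))=\Omega^1_{-m'}\!\Hh\oplus\Omega^1_{m'+2}\!\Hh$. With the correct value, projecting $\Delta_{\Hh}I_a\alpha=I_a\Delta_{\Hh}\alpha+m(m+2)I_a\alpha-\C(I_a\alpha)$ onto $\Omega^1_{m+4}\!\Hh$ gives the constant $m(m+2)-(m+2)(m+4)=-(4m+8)$ on the nose; there is no ``correction constant $c$'' to hunt for, since the $\L_{\xi_a}$-term lives in $\Omega^1_{-m}\!\Hh$ and is annihilated by that projection. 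As written, your argument does not actually establish the coefficient $-4m-8$, which is the whole content of \eqref{D-a}.

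The injectivity argument also needs repair. The ``sum over $a$'' version assumes $(I_a\alpha)_{m+4}=0$ for all three $a$, which is not the hypothesis, since $\mathbf{a}_{-m}$ is built from a single fixed $a$; and the fallback via $\su(2)$-equivariance is unjustified, because $\ker\mathbf{a}_{-m}$ is not visibly $\su(2)$-invariant: by \eqref{Lie-osI} the derivatives $\L_{\xi_b}$ with $b\neq a$ mix the three complex structures, so you cannot pass from $(I_a\alpha)_{m+4}=0$ to $(I_b\alpha)_{m+4}=0$. However, your single-index computation already suffices: from $(I_a\alpha)_{m+4}=0$ you correctly get $\L_{\xi_a}^2\alpha=-(m+2)^2\alpha$ (using $\L_{\xi_a}I_a=0$, hence $[\L_{\xi_a},I_a]=0$ on $\Omega^1\!\Hh$), so $\Vert\L_{\xi_a}\alpha\Vert_{L^2}^2=(m+2)^2\Vert\alpha\Vert_{L^2}^2$; on the other hand $\Vert\L_{\xi_a}\alpha\Vert^2_{L^2}\le\sum_b\Vert\L_{\xi_b}\alpha\Vert^2_{L^2}=\langle\C\alpha,\alpha\rangle_{L^2}=m(m+2)\Vert\alpha\Vert^2_{L^2}$, and $(m+2)^2>m(m+2)$ forces $\alpha=0$. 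Make that explicit and part (b) is complete.
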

\begin{proof}
All algebraic claims follow directly from the identities in Lemma \ref{l1s2}. See also \cite[Lemma 6.1]{NS} for fully detailed proofs in dimension $7$. To prove \eqref{D-a} pick $\alpha \in \Omega^1_{-m}\!\Hh$ which thus belongs to $\ker(\C-m(m+2))$. Taking into account the 
$\sp(1)$-invariance of $\Delta_{\Hh}+\C$ in Lemma \ref{1f},(ii) shows that 
\begin{equation*}
\begin{split}
\Delta_{\Hh} I_a\alpha=&(\Delta_{\Hh}+\C)I_a\alpha-\C(I_a\alpha)=I_a(\Delta_{\Hh}+\C)\alpha-\C(I_a\alpha)\\
=&I_a\Delta_{\Hh}\alpha+
m(m+2)I_a\alpha-\C(I_a\alpha).
\end{split}
 \end{equation*}
Because the operators $\Delta_{\Hh}$ and $\C$ commute with $\p$ the claim is proved by projecting onto $\Omega^1_{m+4}\!\Hh$.
\end{proof}
Our second main estimate for the scalar sub-Laplacian, refining that in Proposition \ref{est-f1}, is contained in the following 
\begin{propn}\label{est2}
The scalar sub-Laplacian  satisfies the lower bound
\begin{equation*}
\Delta_{\Hh} \geq (n+1)(4m+8)
\end{equation*}
on $C^{\infty}_mM \cap \ker(\Delta_{\Hh}-4nm)^{\perp}$ where $m \in \mathbb{N}$.
\end{propn}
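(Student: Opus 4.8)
The plan is to transport the estimate from functions to horizontal $1$-forms, where Proposition \ref{Lco-2} is available, applying it not to $\d_{\Hh}f$ directly but to the image of its lowest $\p$-weight component under the weight-raising map $\mathbf{a}_{-m}$ of Lemma \ref{sp1}. Since $[\Delta_{\Hh},\C]=0$, the operator $\Delta_{\Hh}$ preserves $C^{\infty}_mM$ and its spectral decomposition there is $L^2$-orthogonal, so it suffices to treat a single eigenfunction $f\in C^{\infty}_mM$ with $\Delta_{\Hh}f=\nu f$ that is $L^2$-orthogonal to $\ker(\Delta_{\Hh}-4nm)$; by Proposition \ref{est-f1} such an $f$ has $\nu>4nm$. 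Set $\gamma:=\d_{\Hh}f$. As $\d_{\Hh}$ is $\su(2)$-invariant, $\C\gamma=m(m+2)\gamma$, so $\gamma=\gamma_{-m}+\gamma_{m+2}$ with $\gamma_{-m}\in\Omega^1_{-m}\!\Hh$ and $\gamma_{m+2}\in\Omega^1_{m+2}\!\Hh$.

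The first step I would carry out is to show $\gamma_{-m}\neq 0$. If it vanished then $\p(\d_{\Hh}f)=(m+2)\d_{\Hh}f$, and Proposition \ref{est-f1}(i) would force $f\in\ker(\Delta_{\Hh}-4nm)$, against the choice of $f$; when $m=0$ one instead notes $\Omega^1_2\!\Hh=0$, so $\gamma=\gamma_0=\d_{\Hh}f\neq 0$ because $\nu>0$. (A more quantitative alternative, uniform in $m$: pairing $\gamma=\gamma_{-m}+\gamma_{m+2}$ with $\di_{\Hh}^{\star}$ and invoking Lemma \ref{h-diff}(ii) — which give $\di_{\Hh}^{\star}\gamma=\nu f$ and $\di_{\Hh}^{\star}\p\gamma=4nm(m+2)f$ — yields the linear system solved by $\di_{\Hh}^{\star}\gamma_{-m}=\tfrac{(m+2)(\nu-4nm)}{2(m+1)}\,f$, which is nonzero precisely because $\nu>4nm$.)

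Next I would read off the eigenvalue of $\gamma_{-m}$. From $[\Delta_{\Hh},\d_{\Hh}]f=2\p(\d_{\Hh}f)$ (Lemma \ref{h-diff}(iii)) and $\Delta_{\Hh}f=\nu f$ one gets $\Delta_{\Hh}\gamma=\nu\gamma+2\p\gamma$, and projecting onto the $\Delta_{\Hh}$-invariant space $\Omega^1_{-m}\!\Hh$ (Lemma \ref{1f}(iii)) gives $\Delta_{\Hh}\gamma_{-m}=(\nu-2m)\gamma_{-m}$. Then I apply the injective map $\mathbf{a}_{-m}\colon\Omega^1_{-m}\!\Hh\to\Omega^1_{m+4}\!\Hh$: by the intertwining relation \eqref{D-a}, the $1$-form $\delta:=\mathbf{a}_{-m}(\gamma_{-m})$ is nonzero, lies in $\Omega^1_{m+4}\!\Hh$ (so $\p\delta=(m+4)\delta$), and satisfies $\Delta_{\Hh}\delta=(\nu-2m-4m-8)\delta=(\nu-6m-8)\delta$.

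Finally I feed $\delta$ into Proposition \ref{Lco-2}: from $\langle(\Delta_{\Hh}-(4n-2)\p+8n-8)\delta,\delta\rangle\geq 0$ and $\delta\neq 0$ the two eigenvalue relations give $\nu-6m-8\geq(4n-2)(m+4)-8n+8$, which rearranges to $\nu\geq 4(n+1)(m+2)=(n+1)(4m+8)$, as claimed. The one point that is not automatic — and the reason the obvious approach fails — is the choice of where Proposition \ref{Lco-2} is used: applied to $\gamma$ itself, or to $\gamma_{-m}$, or to $\gamma_{m+2}$, it yields only $\nu\geq 4m(n-1)$ or worse, because the coefficient of $\p$ there is helpful only at high $\p$-weight; climbing to the top weight $m+4$ via $\mathbf{a}_{-m}$ is exactly what turns $4nm$ into $(n+1)(4m+8)$.
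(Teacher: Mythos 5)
Your proposal is correct and follows essentially the same route as the paper: project $\d_{\Hh}f$ onto the lowest $\p$-weight space $\Omega^1_{-m}\!\Hh$, note it is nonzero by Proposition \ref{est-f1}(i), raise it to weight $m+4$ via the injective map $\mathbf{a}_{-m}$ using \eqref{D-a}, and apply Proposition \ref{Lco-2} to the resulting eigenform. The extra details you supply (the explicit formula for $\di_{\Hh}^{\star}\gamma_{-m}$ and the remark on why lower weights fail) are consistent with, and slightly more explicit than, the paper's argument.
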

\begin{proof}
Consider $f \in C^{\infty}_m M \cap \ker (\Delta_{\Hh} - \lambda)$ where $\lambda \neq 4nm$. Due to the commutator identities $[\Delta_{\Hh}, \p] = 0 $ and $[\Delta_{\Hh}, \d_{\Hh}]  = 2 \p \circ \d_{\Hh}$ (see Lemma \ref{h-diff}, (iii)) we get $\Delta_{\Hh} (\d_{\Hh} f)_{-m} = (\lambda - 2m)(\d_{\Hh} f)_{-m}$. Further on 
the form 
$$\gamma:=\mathbf{a}_{-m}(\d_{\Hh}\!f)_{-m}=(I_a(\d_{\Hh}\!f)_{-m})_{m+4} \in \Omega^1_{m+4}\!\Hh$$ satisfies 
$ \Delta_{\Hh}\gamma=(\lambda-6m-8)\gamma
$
according to \eqref{D-a}. Since the map $\mathbf{a}_{-m}$ is injective the form $\gamma$ cannot vanish identically, otherwise 
$(\d_{\Hh}\!f)_{-m}=0$ hence $f \in \ker(\Delta_{\Hh}-4nm)$ by (i) in Proposition \ref{est-f1}, which is a contradiction. The claim thus follows from the estimate in Proposition \ref{Lco-2} applied to the eigenform $\gamma$ above.
 \end{proof}
This entails, as an immediate consequence, a Lichnerowicz-Obata type estimate for the scalar basic Laplacian, which we recall, acts on $\su(2)$-invariants functions. 
\begin{cor}\label{basic1} Let $M^{4n+3}$ be compact and equipped with a $3$-Sasaki structure $(g,\xi)$. The scalar basic Laplacian $\Delta_B:=\Delta^g_{\vert C_B^{\infty}M}: C_B^{\infty}M \to C_B^{\infty}M$ satisfies  
$$ \lambda_1(\Delta_B) \geq 8(n+1).
$$
If $g$ does not have constant sectional curvature this estimate is strict.
\end{cor}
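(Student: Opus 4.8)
The plan is to deduce the estimate directly from Proposition \ref{est2} by specialising to $m=0$. Recall that $C^\infty_0 M = C^\infty_B M$ is the space of basic (i.e.\ $\su(2)$-invariant) functions, and that on such functions $\C$ acts as zero, so that $\Delta^g = \Delta_{\Hh} + \C = \Delta_{\Hh}$ by Lemma \ref{s2l1},(iii); hence $\Delta_B$ and $\Delta_{\Hh}|_{C^\infty_0 M}$ coincide. The key observation is that the exceptional subspace $\ker(\Delta_{\Hh}-4nm)\cap C^\infty_m M$ appearing in Proposition \ref{est2} degenerates when $m=0$: it consists of functions with $\Delta_{\Hh} f = 0$, i.e.\ constants on a compact connected $M$. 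Therefore, away from the constants, all of $C^\infty_0 M$ lies in the orthogonal complement $\ker(\Delta_{\Hh}-4nm)^\perp$ with $m=0$, and Proposition \ref{est2} gives $\Delta_{\Hh} \ge (n+1)\cdot 8 = 8(n+1)$ there. This is exactly $\lambda_1(\Delta_B) \ge 8(n+1)$.

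For the strictness statement when $g$ does not have constant sectional curvature, I would argue by contradiction: suppose $f \in C^\infty_0 M$ is a non-constant eigenfunction with $\Delta_B f = 8(n+1) f$. Since $\Delta^g f = \Delta_{\Hh} f = 8(n+1) f$ and $\Ric^g = (4n+2)g$, Lichnerowicz's estimate forces $8(n+1) \ge \tfrac{4n+3}{4n+2}\cdot(4n+2) = 4n+3$, which is satisfied, so this alone does not yet give a contradiction; instead one should chase the equality case in Proposition \ref{est2} itself. Tracking through its proof with $m=0$: the form $(\d_{\Hh} f)_0 = \d_{\Hh} f$ (since $\d_{\Hh} f$ is basic when $f$ is basic, as $\L_{\xi_a}$ commutes with $\d_{\Hh}$), and $\gamma := \mathbf{a}_0(\d_{\Hh} f) = (I_a \d_{\Hh} f)_4 \in \Omega^1_4\!\Hh$ is a non-zero eigenform of $\Delta_{\Hh}$ with eigenvalue $8(n+1) - 8 = 8n$; by Proposition \ref{Lco-2}, $8n \ge (4n-2)\cdot 4 - 8n + 8 = 8n$, so equality holds in Proposition \ref{Lco-2} for $\gamma$, i.e.\ $\D^{\Hh}_X\gamma = 0$ for all $X \in \Hh$. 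One then argues that the vanishing of $\D^{\Hh}$ on this particular $\gamma$, combined with the $\sp(1)$-structure, forces $g$ to have constant sectional curvature — this rigidity should follow by relating $\gamma$ back to a Killing potential via the Sasaki-Einstein structure (as in the discussion preceding section \ref{tri-sectn}) and invoking Obata's theorem, or by a direct computation showing $\d_{\Hh} f$ would have to be parallel along $\Hh$ in a way incompatible with curvature.

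The main obstacle is the strictness claim: the clean inequality is immediate, but extracting the rigidity requires understanding the limiting space $\{\gamma \in \Omega^1\!\Hh : \D^{\Hh}_X\gamma = 0\}$, which the paper explicitly declines to analyse in general after Proposition \ref{Lco-2}. The safest route is probably not to analyse that space in full, but to note that equality in Proposition \ref{est2} at $m=0$ propagates back to equality in Proposition \ref{est-f1} as well — so $f$ would be a basic function with $\p(\d_{\Hh} f) = 2\,\d_{\Hh} f$ and $\Delta_{\Hh} f = 0$ simultaneously forced to be non-trivial only in the constant-curvature case — or alternatively to defer the strictness to the sharper machinery of Theorem \ref{main1} and its proof of the $\gg$-identification, where the constant-curvature dichotomy is already built in. I would present the inequality cleanly and handle strictness by the contradiction-with-equality argument above, citing Obata's theorem for the final step.
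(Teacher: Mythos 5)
Your derivation of the inequality is exactly the paper's: Proposition \ref{est2} with $m=0$, together with the observations that $\Delta_B=\Delta_{\Hh}$ on basic functions and that $\ker\Delta_{\Hh}\cap C_0^{\infty}M$ consists of constants. That part is correct and complete.

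The strictness argument, however, has a genuine gap. Your main route requires identifying the limiting space $\{\gamma\in\Omega^1\!\Hh : \D^{\Hh}_X\gamma=0,\ X\in\Hh\}$ of Proposition \ref{Lco-2}, which you correctly note the paper declines to analyse; as written, the step ``the vanishing of $\D^{\Hh}$ on this particular $\gamma$ \dots forces $g$ to have constant sectional curvature'' is an unproved assertion, and your fallback to Obata's theorem cannot close it, since Obata applies only at the Lichnerowicz threshold $\lambda_1(\Delta^g)=4n+3$ while your hypothetical eigenvalue is $8(n+1)>4n+3$ (you acknowledge this yourself). The alternative of ``propagating equality back to Proposition \ref{est-f1}'' is empty at $m=0$, since that proposition only covers $m\geq 1$. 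The paper's actual argument (Remark \ref{2nd-pf},(ii)) avoids the equality analysis entirely: a basic $f$ with $\Delta_Bf=8(n+1)f$ lies in $C^{\infty}_{B_1}M\cap\ker(\Delta_{B_1}-8(n+1))$, hence by the Sasaki--Einstein isomorphism \eqref{iso-S} --- together with $\mathfrak{h}_1=\gg$, which is where the non-constant-curvature hypothesis enters --- one has $f=\mu_1(X)$ for some $X\in\gg$; but then $\C f=8f$ because $\mu(X)\in C^{\infty}_{\pi_1}(M,U_1)$, while $\C f=0$ because $f$ is $\su(2)$-invariant, so $f=0$. You do gesture at ``relating back to a Killing potential via the Sasaki--Einstein structure'', which is the right idea, but you apply it to the auxiliary form $\gamma$ rather than to $f$ itself and never extract a contradiction; the missing ingredient is precisely this clash of Casimir eigenvalues ($8$ versus $0$) on the moment-map component.
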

\begin{proof}
As the estimate follows from Proposition \ref{est2} with $m=0$ we only need to show that equality does not occur if $g$ does not have 
constant sectional curvature. As this fact is not needed in this section we postpone its proof until Remark \ref{2nd-pf}, (ii).
\end{proof}
When $n=1$ this result follows, as explained 
in \cite{NS}, from the Obata type estimates for the basic Laplacian of a Riemannian foliation in \cite{LeeR}. When $(g,\xi)$ is regular 
the estimate has been proved in \cite[Corollary 3.5]{LeB}, see also \cite{AMP}. The lower bound on the basic Laplacian above complements that in Proposition \ref{est-f1}, which is not covered when $m=0$. Based on the results in this section, we can prove the first part of Theorem \ref{main1}, as formulated below.
\begin{thm} \label{thm1-half}Assuming that $g$ does not have constant sectional curvature the scalar sub-Laplacian satisfies the lower bound $\lambda_1(\Delta_{\Hh}) \geq 8n$. The limiting eigenspace satisfies
$\ker(\Delta_{\Hh}-8n) \subseteq C_2^{\infty}M$.
\end{thm}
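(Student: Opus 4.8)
\textbf{Proof plan for Theorem \ref{thm1-half}.}
The plan is to combine the two main estimates obtained so far -- Proposition \ref{est-f1} ($\Delta_{\Hh}\geq 4nm$ on $C_m^{\infty}M$ for $m\geq 1$) and Proposition \ref{est2} ($\Delta_{\Hh}\geq (n+1)(4m+8)$ on the $L^2$-complement of $\ker(\Delta_{\Hh}-4nm)$ inside $C_m^{\infty}M$) -- together with the basic case from Corollary \ref{basic1} ($\lambda_1(\Delta_B)\geq 8(n+1)$). First I would fix a non-constant $f\in C^{\infty}M$ with $\Delta_{\Hh}f=\lambda f$ and $\int_M f\,\vol=0$, and decompose $f$ according to the $\su(2)$-isotypical decomposition $L^2M=\overline{\bigoplus_{m\in\mathbb N}C_m^{\infty}M}$. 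Since $\Delta_{\Hh}$ commutes with the Casimir operator $\C$ (both being $\su(2)$-invariant), the eigenspace $\ker(\Delta_{\Hh}-\lambda)$ is $\su(2)$-invariant, so it suffices to bound $\lambda$ from below on each $C_m^{\infty}M$ separately and take the minimum.

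Next I would run the case analysis on $m$. For $m=0$: a nonzero basic eigenfunction with eigenvalue $\lambda$ forces $\lambda\geq 8(n+1)>8n$ by Corollary \ref{basic1}. For $m=1$: Proposition \ref{est-f1}(ii) says $\ker(\Delta_{\Hh}-4n)\cap C_1^{\infty}M=0$ when $g$ does not have constant sectional curvature, so any eigenfunction in $C_1^{\infty}M$ has $\lambda\neq 4n$; by Proposition \ref{est-f1} then $\lambda>4n=4n\cdot 1$, and in fact Proposition \ref{est2} with $m=1$ upgrades this to $\lambda\geq (n+1)\cdot 12=12n+12>8n$. For $m=2$: Proposition \ref{est-f1} gives $\lambda\geq 4n\cdot 2=8n$ directly, with no further improvement needed -- this is exactly where the bound $8n$ is attained, which also explains the inclusion $\ker(\Delta_{\Hh}-8n)\subseteq C_2^{\infty}M$. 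For $m\geq 3$: Proposition \ref{est-f1} already yields $\lambda\geq 4nm\geq 12n>8n$. Assembling these, every nonzero eigenvalue on $\bigoplus_m C_m^{\infty}M$ is $\geq 8n$, and the dense span of these spaces in $L^2M$ (together with discreteness of the spectrum, so that any eigenfunction actually lies in the algebraic sum) gives $\lambda_1(\Delta_{\Hh})\geq 8n$.

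For the statement about the limiting eigenspace, suppose $\Delta_{\Hh}f=8nf$ with $f\neq 0$; decomposing $f=\sum_m f_m$ with $f_m\in C_m^{\infty}M$, the above case analysis shows that the components with $m\neq 2$ must all vanish: for $m=0$ the eigenvalue would have to be $\geq 8(n+1)>8n$; for $m=1$ it would be $\geq 12n+12>8n$; for $m\geq 3$ it would be $\geq 4nm\geq 12n>8n$. Hence $f=f_2\in C_2^{\infty}M$, i.e. $\ker(\Delta_{\Hh}-8n)\subseteq C_2^{\infty}M$.

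The main obstacle I anticipate is purely bookkeeping rather than conceptual: one must make sure the spectral-theoretic reduction is clean, namely that an arbitrary $L^2$-eigenfunction of the hypoelliptic operator $\Delta_{\Hh}$ genuinely decomposes as a \emph{finite} sum of pieces lying in the individual $C_m^{\infty}M$ (this uses that $\Delta_{\Hh}$ and $\C$ commute and have discrete joint spectrum, with $\C$ acting as $m(m+2)$ on $C_m^{\infty}M$), so that it is legitimate to argue component-by-component. One also has to be slightly careful that the $m=1$ exclusion in Proposition \ref{est-f1}(ii) is invoked only under the standing hypothesis that $g$ is not of constant sectional curvature, which is exactly the hypothesis of the theorem; every other inequality used is unconditional.
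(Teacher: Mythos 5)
Your proposal is correct and follows essentially the same route as the paper: the paper's proof is exactly this case analysis on the isotypic components $C_m^{\infty}M$, using Corollary \ref{basic1} for $m=0$, Proposition \ref{est-f1}(ii) together with Proposition \ref{est2} for $m=1$, and Proposition \ref{est-f1} for $m\geq 2$. Your extra care about the $\su(2)$-isotypic decomposition of an eigenfunction (via $[\Delta_{\Hh},\C]=0$) is a point the paper leaves implicit, but it is the standard justification and does not change the argument.
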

\begin{proof}
The spaces $C^{\infty}_mM $ where $m \in \mathbb{N}$ are preserved by the sub-Laplacian and moreover their 
direct sum is dense in $C^{\infty}M$. It is therefore enough to collect the previously obtained estimates 
for the first non-zero eigenvalue of $\Delta_{\Hh}$ acting on each of these spaces and then take the infimum with $m \in \mathbb{N}$. Indeed, by Corollary \ref{basic1} we know that $\Delta_{\Hh} \geq 8(n+1)>8n$ on non-constant basic functions. 
At the same time, Proposition \ref{est-f1} ensures that $\Delta_{\Hh} \geq 4nm \geq 8n $ on $C_m^{\infty}M$, provided that $m \geq 2$.
When $m=1$, the space $\ker(\Delta_{\Hh}-4n)$ vanishes by part (ii) in the same proposition. 
Hence we have $\Delta_{\Hh} \geq 12(n+1)>8n$ on $C_1^{\infty}M$ by Proposition \ref{est2}.
This allows concluding 
that $\lambda_1(\Delta_{\Hh}) \geq 8n$, with equality attained on functions in  $C_2^{\infty}M$.
\end{proof}
\subsection{Spectral description of the tri-moment map } \label{tri-sectn}
To fully prove Theorem \ref{main1} there remains to describe the limiting eigenspace $\ker(\Delta_{\Hh}-8n)$. 
First we recall a few well known facts related to isometries and automorphisms of $3$-Sasaki structures. The Lie algebra of infinitesimal automorphisms of a $3$-Sasaki structure $(g,\xi)$ on a compact manifold  $M^{4n+3}$ is defined according to 
\begin{equation*}
\gg := \mathfrak{aut} (M,\xi^a) = \{ X \in \Gamma(TM) : \L_X \xi^a = 0, a=1,2,3\}.
\end{equation*}
Throughout this section we assume that $g$ {\it{does not}} have constant sectional curvature, so that the space of Killing fields splits as 
\begin{equation} \label{iso-1}
\mathfrak{aut}(M,g) = \spa\{\xi_1, \xi_2, \xi_3\} \, \oplus \, \gg  
\end{equation}
see e.g. \cite{BoGa} for a proof. 
The {\it  tri-moment map } $\mu : \mathfrak{g} \rightarrow C^\infty (M, \mathbb R^3)$ is defined
by $\mu= (\mu_1, \mu_2, \mu_3)^T$ with $\mu_a(X):= g(\xi_a, X).$  
The tri-moment map satisfies the $\su(2)$-transformation rules 
$\L_{\xi_a}\mu_a(X)=0, \ \L_{\xi_a}\mu_b(X)=-\L_{\xi_b}\mu_a(X)=2\mu_c(X)$ with cyclic permutation on $abc$.
Equivalently, 
\begin{equation} \label{inf1}
\mu(X) \in C^{\infty}_{\pi_1}(M,U_1)
\end{equation}
in the notation of Section \ref{lb1}. Here we recall that $(\pi_1,U_1 \cong \bbR^3)$ is the adjoint representation of the Lie algebra $\su(2)$. In addition 
\begin{eqnarray}
&& I_1\di_{\Hh}\!\mu_1(X)=I_2\di_{\Hh}\mu_2(X)=I_3\di_{\Hh}\mu_3(X).  \label{inf2} 
\end{eqnarray}
Conversely, any function  $f \in C^\infty_{\pi_1}(M, U_1)$ satisfying \eqref{inf2} gives rise to an
infinitesimal automorphism via $X= \sum_a f_a \xi_a - \tfrac12 I_1 \, \grad_{\Hh}\, f_1 \in \gg$ with $f= \mu(X)$. In particular the map 
$\mu:\gg \to  C^\infty_{\pi_1}(M, U_1)$ is injective. The proofs of these facts are local in nature and based on the $\su(2)$-commutator relations in \eqref{su2} and Cartan's formula. 

The first step towards characterising the tri-moment map by its spectral properties hinges on a specific fact from Sasaki-Einstein geometry which we now explain. Consider the Sasaki-Einstein structure $(g,\xi_a)$ where $a \in \{1,2,3\}$ together with the Lie subalgebra 
$\mathfrak{h}_a:=\{X \in \mathfrak{aut}(M,g) : [X,\xi_a]=0 \ \mbox{and} \ \int_M \xi^a(X)\vol=0\} \subseteq \mathfrak{aut}(M,g).$ As it is the case for any Sasaki-Einstein structure the moment map induces an isomorphism 
\begin{equation} \label{iso-S}
\tau_a:\mathfrak{h}_a \to \ker(\Delta_{B_a}-8(n+1)), \  X \mapsto g(X,\xi_a),
\end{equation}
 with inverse given by $f \mapsto f\xi_a-\frac{1}{2}\tilde{I}_a\grad f$ in the notation of Section \ref{op-id}. With respect to the Sasaki-Einstein structure $(g,\xi_a)$ we consider the basic Laplacian 
 $$\Delta_{B_a}:=\Delta^g_{\vert C^{\infty}_{B_a}M}:
C^{\infty}_{B_a}M \to C^{\infty}_{B_a}M$$ 
where the space of basic functions $C^{\infty}_{B_a}M:=\{f \in C^{\infty}M : \L_{\xi_a}f=0\}$. For a proof of \eqref{iso-S} see \cite[Theorem 11.3.1]{BoGa} which deals with the more general case of extremal Sasaki metrics; for a cone view of the isomorphism in \eqref{iso-S} see the proof of \cite[Theorem 2.14., (iii)]{HS}. By analogy with K\"ahler geometry functions in 
$\ker(\Delta_{B_a}-8(n+1))$ will be called Killing potentials. Because $g$ is assumed not to have constant sectional curvature the splitting \eqref{iso-1} ensures that $\mathfrak{h}_a=\gg$ and thus $\tau_a=\mu_a$.

\begin{propn} \label{tri-m2}
The tri-moment map induces a linear isomorphism 
$$\mu:\g \to \ker(\Delta_{\Hh}-8n) \cap C^{\infty}_{\pi_1}(M,U_1).$$
\end{propn}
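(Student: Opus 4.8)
\textbf{Proof proposal for Proposition \ref{tri-m2}.}

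The plan is to show that the map $\mu\colon \gg\to \ker(\Delta_{\Hh}-8n)\cap C^{\infty}_{\pi_1}(M,U_1)$ is well-defined, injective, and surjective. Injectivity is already recorded: $\mu$ is injective on $\gg$ by the discussion preceding \eqref{iso-S}. The bulk of the work is the two-sided inclusion at the level of eigenspaces.

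\emph{Well-definedness (the inclusion $\subseteq$).} Let $X\in\gg$ and set $f=\mu(X)$. Property \eqref{inf1} gives $f\in C^{\infty}_{\pi_1}(M,U_1)$, so I only need $\Delta_{\Hh}f=8nf$. Since $\mathfrak h_a=\gg$ and $\tau_a=\mu_a$ under the standing hypothesis, each component $\mu_a(X)$ is a Killing potential for the Sasaki--Einstein structure $(g,\xi_a)$, i.e. $\mu_a(X)\in\ker(\Delta_{B_a}-8(n+1))$. Now decompose $\Delta^g$ acting on $f$: by Lemma \ref{s2l1}(iii), $\Delta^g=\Delta_{\Hh}+\C$ on functions, and because $f\in C^{\infty}_{\pi_1}(M,U_1)$ the Casimir $\C$ acts on $f$ as multiplication by the Casimir eigenvalue of the adjoint representation $U_1$, which is $1\cdot 3=3$. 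So I need to extract $\Delta^g f$ componentwise. The relation $\Delta_{B_a}\mu_a(X)=8(n+1)\mu_a(X)$ only controls $\Delta^g$ on the $\xi_a$-basic part; I will combine the three relations using \eqref{inf2} and the $\su(2)$-equivariance \eqref{inf1} to conclude $\Delta^g\mu_a(X)=(8n+8+?)\mu_a(X)$ — more precisely, writing $\Delta^g=\Delta^g_{B_a}-\L_{\xi_a}^2$ on functions and using $\L_{\xi_a}\mu_a(X)=0$ while $\L_{\xi_b}\mu_a(X)=\pm 2\mu_c(X)$, one gets $-\sum_b\L_{\xi_b}^2\mu_a(X)=\C\mu_a(X)$, and since $\C$ acts as $3$ on the $U_1$-valued function $\mu(X)$ componentwise I deduce $\Delta^g\mu_a(X)=8(n+1)\mu_a(X)+(\text{correction from }\C)$, and subtracting $\C$ leaves exactly $\Delta_{\Hh}\mu_a(X)=8n\,\mu_a(X)$. (The numerics: $8(n+1)=8n+8$, $\C$ contributes $3$ to $\Delta^g$ on $\pi_1$-type functions relative to the basic normalization — I would check that the bookkeeping gives precisely $8n$, using $\Delta_{B_a}=\Delta^g+\L_{\xi_a}^2$ and $-\L_{\xi_a}^2$ is the ``$a$-th piece'' of $\C$.)

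\emph{Surjectivity (the inclusion $\supseteq$).} Let $f\in\ker(\Delta_{\Hh}-8n)\cap C^{\infty}_{\pi_1}(M,U_1)$; I must produce $X\in\gg$ with $\mu(X)=f$. By the converse statement recorded after \eqref{inf2}, it suffices to verify the single integrability condition $I_1\dH f_1=I_2\dH f_2=I_3\dH f_3$; then $X=\sum_a f_a\xi_a-\tfrac12 I_1\grad_{\Hh}f_1$ lies in $\gg$ with $\mu(X)=f$. Since $f\in C^{\infty}_{\pi_1}(M,U_1)$ has $\C f=3f$ componentwise and $\Delta_{\Hh}f=8nf$, so $\Delta^g f=(8n+3)f$ — wait, I should instead argue at the level of each scalar $f_a$: the $\su(2)$-equivariance forces $f_a\in C^{\infty}_{B_a}M$ (as $\L_{\xi_a}f_a=0$ from $\L_{\xi_a}f=-\pi_1(A_a)f$ and the $a$-th diagonal entry of $\pi_1(A_a)$ is zero), and combining $\Delta_{\Hh}f_a=8n f_a$ with the Casimir relation gives $\Delta_{B_a}f_a=\Delta^g f_a=8(n+1)f_a$, so each $f_a$ is a Killing potential for $(g,\xi_a)$. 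Via the isomorphism \eqref{iso-S}, set $X_a:=\tau_a^{-1}(f_a)=f_a\xi_a-\tfrac12\tilde I_a\grad f_a\in\gg$. The heart of the argument — and I expect \textbf{this to be the main obstacle} — is to show $X_1=X_2=X_3$ (hence the common vector field is the desired $X$), equivalently that the Hamiltonian data $(f_1,f_2,f_3)$ is compatible across the three Sasaki structures; this is precisely where one must use the refined spectral estimate machinery (Propositions \ref{est-f1}, \ref{est2}) rather than just $\su(2)$-bookkeeping, because a priori $f\in\ker(\Delta_{\Hh}-8n)$ could fail the strong rigidity needed. Concretely, from Proposition \ref{est-f1}(i) with $m=1$ applied componentwise we would get $\p(\dH f_a)=3\,\dH f_a$, i.e. each $\dH f_a\in\Omega^1_3\Hh$; feeding this into the $\sp(1)$-equivariance of Lemma \ref{sp1} and the identities of Lemma \ref{l1s2} should force the three one-forms $I_a\dH f_a$ to coincide, giving \eqref{inf2}. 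Granting \eqref{inf2}, the converse construction yields $X\in\gg$ with $\mu(X)=f$, and composing with injectivity finishes the proof.

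One subtlety I would flag: the standing assumption that $g$ does not have constant sectional curvature is used twice — once to get the clean splitting \eqref{iso-1} so that $\mathfrak h_a=\gg$ and $\tau_a=\mu_a$, and once implicitly through Theorem \ref{thm1-half} to know $\ker(\Delta_{\Hh}-8n)\subseteq C_2^\infty M$, which pins down the Casimir eigenvalue and justifies treating $f$ as genuinely of ``$\pi_1$-type''. I would state these dependencies explicitly at the start of the proof.
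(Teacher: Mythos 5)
Your surjectivity argument contains a genuine gap. You correctly reduce the problem to showing $X_1=X_2=X_3$ for the three vector fields $X_a=\tau_a^{-1}(f_a)\in\gg$ supplied by \eqref{iso-S}, but you leave exactly this step unproved: you assert that the refined spectral machinery (Propositions \ref{est-f1}, \ref{est2}) combined with Lemma \ref{sp1} ``should force'' the compatibility \eqref{inf2}, without carrying out any computation. In fact your instinct that ``$\su(2)$-bookkeeping'' is insufficient is backwards — plain vertical Lie differentiation closes the argument, and no estimate is needed. Since $X_b\in\gg$, the triple $\mu(X_b)$ lies in $C^{\infty}_{\pi_1}(M,U_1)$ by \eqref{inf1}; applying $\L_{\xi_c}$ to the identity $\mu_b(X_b)=f_b$ and comparing with $\L_{\xi_c}f_b=2f_a$ (which holds because $f\in C^{\infty}_{\pi_1}(M,U_1)$) yields $\mu_a(X_b)=f_a=\mu_a(X_a)$, whence $\mu_a(X_a-X_b)=0$ and $X_a=X_b$ by the injectivity in \eqref{iso-S}. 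In particular you never need to verify \eqref{inf2} directly, nor to invoke the one-form spaces $\Omega^1_k\!\Hh$. As written, the key step of your proof is a conjecture, not an argument.

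There is also a systematic numerical slip. In the paper's normalization ($[\xi_1,\xi_2]=2\xi_3$, Casimir eigenvalue $m(m+2)$ with $m=2l$) the Casimir of the adjoint representation $U_1$ is $8$, not $3$: the components of a function in $C^{\infty}_{\pi_1}(M,U_1)$ lie in $C_2^{\infty}M$, not $C_1^{\infty}M$. With the correct value your well-definedness bookkeeping does close, $8(n+1)-8=8n$, and that route — using the Killing potential property $\Delta_{B_a}\mu_a(X)=8(n+1)\mu_a(X)$ from \eqref{iso-S} together with $\Delta_{\Hh}=\Delta^g-\C$ — is a legitimate alternative to the paper's computation, which instead shows $\d_{\Hh}\mu_a(X)\in\Omega^1_4\!\Hh$ and applies $\di_{\Hh}^{\star}\p\di_{\Hh}=4n\C$. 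But the same slip propagates into your surjectivity sketch: Proposition \ref{est-f1}(i) must be applied with $m=2$, giving $\p(\dH f_a)=4\,\dH f_a$, not $m=1$ and eigenvalue $3$. You should fix the weight throughout and replace the speculative final step by the Lie-derivative argument above.
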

\begin{proof}
Letting $X \in \gg$ we have $\mu(X) \in C^{\infty}_{\pi_1}(M,U_1)$ by \eqref{inf1}; in addition, combining  \eqref{inf1} and \eqref{inf2} with the definition of $\p$ yields $\d_{\Hh} \mu_a(X) \in \Omega^1_4\! \Hh$.
Then 
$$\Delta_{\Hh} \mu_a(X) = \di^{\star}_{\Hh} \di_{\Hh} \mu_a(X) = \tfrac14  \di^{\star}_{\Hh}  \p \di_{\Hh} \mu_a(X) = n \C \mu_a(X) = 8n \mu_a(X)$$ by taking into account Lemma \ref{h-diff},(ii). It follows that 
$\mu$ is well defined as stated; it is also injective, due to the isomorphisms in \eqref{iso-S}. To prove surjectivity for $\mu$, pick $f=(f_1,f_2,f_3) \in \ker(\Delta_{\Hh}-8n) \cap C^{\infty}_{\pi_1}(M,U_1)$. Then $f_a \in C^{\infty}_{B_a}M$ and 
$$ \Delta_{B_a}f=\Delta_{\Hh}f-\L_{\xi_2}^2f-\L_{\xi_3}^2f=8(n+1)f$$ thus \eqref{iso-S} provides a triple $X_a \in \gg$ with 
$\mu_a(X_a)=f_a$ for $a=1,2,3$. Vertical Lie differentiation of these equalities, combined with having $f \in C^{\infty}_{\pi_1}(M,U_1)$ and 
\eqref{inf1} shows that $\mu_a(X_b)=f_a$ for $a \neq b \in \{1,2,3\}$.
In particular $\mu_a(X_b-X_c)=0$ with cyclic permutations on $abc$, which forces $X_1=X_2=X_3$ by using, for instance, the isomorphisms in \eqref{iso-S}. The surjectivity of $\mu$ is thus proved.
\end{proof}
At this stage a few remarks are in order.
\begin{rem} \label{2nd-pf}
\begin{itemize}
\item[(i)] When $(M^{4n+3},g,\xi)$ is regular a proof of Proposition \ref{tri-m2} can be also extracted from \cite{AMP} after going through the identifications in Remark \ref{reg-case}; it mainly uses the K\"ahler geometry of the twistor space of the smooth quaternion-K\"ahler quotient $M\slash \mathcal{F}$. 
\item[(ii)] The basic Laplacian of the Sasaki-Einstein structure $(g,\xi_a)$ satisfies  the estimate $\Delta_{B_a}\geq 8(n+1)$ according to 
\cite[Theorem 5.1]{F}. The lower bound stated in that reference is $4(n+1)$ which is due to horizontal rescaling in the metric $g$. As 
$C^{\infty}_BM \subseteq C^{\infty}_{B_a}M $ we recover the estimate $\Delta_B \geq 8(n+1)$ from Corollary \ref{basic1}. This estimate is strict: since $C^{\infty}_BM \cap \ker(\Delta_B-8(n+1)) \subseteq C^{\infty}_{B_1}M \cap \ker(\Delta_{B_1}-8(n+1))$ a function $f$ in the former space satisfies, by \eqref{iso-S}, $\mu_1(X)=f$ for $X \in \gg$.  As $\C\mu_1(X)=8\mu_1(X)$ and $f$ is $\su(2)$ invariant it follows that $f=0$. This argument completes the proof of Corollary \ref{basic1}.
\end{itemize}
\end{rem}
The second part in Theorem \ref{main1},  i.e. the full  description of the limiting eigenspace  $\ker(\Delta_{\Hh}-8n)$ 
can now be dealt with.
\begin{thm} \label{lim-f}
The map 
$$\widetilde{\mu} : \gg \oplus \gg \oplus \gg \to \ker(\Delta_{\Hh}-8n), \ \ \tilde{\mu}(X_1,X_2,X_3):=\sum_a\mu_a(X_a)$$ 
is a linear isomorphism.
\end{thm}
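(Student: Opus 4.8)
The plan is to decompose $\ker(\Delta_{\Hh}-8n)$ according to the $\su(2)$-representation on functions and reduce everything to the already-established isomorphism $\mu:\gg\to\ker(\Delta_{\Hh}-8n)\cap C^{\infty}_{\pi_1}(M,U_1)$ of Proposition \ref{tri-m2}. By Theorem \ref{thm1-half} we know $\ker(\Delta_{\Hh}-8n)\subseteq C^{\infty}_2M$, so it suffices to understand $C^{\infty}_2M\cap\ker(\Delta_{\Hh}-8n)$. The Casimir operator acts as $2\cdot 4=8$ on $C^{\infty}_2M$, and the representation-theoretic fact recorded around \eqref{embd1} is that every $\su(2)$-subrepresentation of $C^{\infty}_2M$ is isomorphic to a sum of copies of $U_1=\bbR^3$ (the adjoint representation). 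Concretely, $C^{\infty}_2M\cong U_1\otimes\Hom_{\su(2)}(U_1,C^{\infty}_2M)$, and one has an $\su(2)$-equivariant identification $C^{\infty}_2M\cong U_1\otimes C^{\infty}_{\pi_1}(M,U_1)$ realized by sending $f\in C^{\infty}_{\pi_1}(M,U_1)$, together with a vector $v\in U_1$, to the scalar function obtained by pairing; equivalently, picking a basis $\{e_1,e_2,e_3\}$ of $U_1$ and writing $f=\sum_a f_a e_a\in C^{\infty}_{\pi_1}(M,U_1)$, the three scalar components $f_1,f_2,f_3$ span an $\su(2)$-copy of $U_1$ inside $C^{\infty}_2M$, and conversely any such copy arises this way.

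The key computation is that $\Delta_{\Hh}$ is $\su(2)$-invariant (since $\dH$ and $\dH^\star$ are, by the discussion in section \ref{3sas-p}), hence it preserves each isotypic piece and in fact acts on the multiplicity space $C^{\infty}_{\pi_1}(M,U_1)$. Therefore
\[
\ker(\Delta_{\Hh}-8n)\;\cong\;U_1\otimes\bigl(\ker(\Delta_{\Hh}-8n)\cap C^{\infty}_{\pi_1}(M,U_1)\bigr)\;\cong\;U_1\otimes\gg
\]
using Proposition \ref{tri-m2} for the second isomorphism. Since $\dim_{\bbR}U_1=3$, the right-hand side is $\gg\oplus\gg\oplus\gg$. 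It remains only to check that the abstract isomorphism so obtained is exactly $\widetilde{\mu}$. For this one fixes the standard basis $A_1,A_2,A_3$ of $\su(2)$ as identified with a basis of $U_1$, and unwinds: given $X\in\gg$, the element $\mu(X)\in C^{\infty}_{\pi_1}(M,U_1)$ has components $\mu_1(X),\mu_2(X),\mu_3(X)$, and the isotypic copy of $U_1$ it spans, when paired against the basis vector $A_a$, returns precisely $\mu_a(X)$. Summing over a triple $(X_1,X_2,X_3)$ — one copy of $\gg$ for each basis direction of $U_1$ — produces $\sum_a\mu_a(X_a)$, which is the definition of $\widetilde{\mu}$. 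Injectivity and surjectivity then transport from the tensor-product description.

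The main obstacle I anticipate is bookkeeping the $\su(2)$-equivariance carefully enough to be sure that $\Delta_{\Hh}$ really does descend to the multiplicity space $C^{\infty}_{\pi_1}(M,U_1)$ and that the descended operator is the \emph{same} one appearing in Proposition \ref{tri-m2} — i.e., that no extra lower-order $\su(2)$-twist creeps in. This is where one must use that $\Delta_{\Hh}$ genuinely commutes with all $\L_{\xi_a}$ (not merely with the Casimir $\C$), so that it is an endomorphism of the full isotypic component, not just of its Casimir eigenspace; combined with Schur's lemma (over $\bbR$, noting $U_1$ is of real type with $\End_{\su(2)}(U_1)=\bbR$) this pins the action down to the multiplicity space. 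A secondary point is that one should confirm $\widetilde{\mu}$ is well defined into $\ker(\Delta_{\Hh}-8n)$ directly — but this is immediate since each $\mu_a(X_a)\in\ker(\Delta_{\Hh}-8n)$ by the computation in the proof of Proposition \ref{tri-m2}, which only used $\dH\mu_a(X_a)\in\Omega^1_4\Hh$ together with Lemma \ref{h-diff}(ii), and that argument does not require the three vector-field arguments to coincide.
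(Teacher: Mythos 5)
Your proposal is correct and follows essentially the same route as the paper: both reduce to Proposition \ref{tri-m2} by observing that $\ker(\Delta_{\Hh}-8n)\subseteq C_2^{\infty}M$ is a finite sum of copies of the adjoint representation $U_1$ (Casimir eigenvalue $8$) on which $\Delta_{\Hh}$ acts $\su(2)$-equivariantly, so that the multiplicity space is exactly $\ker(\Delta_{\Hh}-8n)\cap C^{\infty}_{\pi_1}(M,U_1)\cong\gg$ and the evaluation map unwinds to $\widetilde{\mu}$. The only cosmetic difference is that you deduce injectivity from the tensor-product/Schur description, whereas the paper verifies it directly by computing $\L_{\xi_a}\L_{\xi_b}\widetilde{\mu}(X_1,X_2,X_3)=4\mu_b(X_a)$ for $a\neq b$ and invoking the isomorphisms \eqref{iso-S}.
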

\begin{proof}
Direct computation based on \eqref{inf1} leads to 
$\L_{\xi_a}\L_{\xi_b}\widetilde{\mu}(X_1,X_2,X_3)=4\mu_b(X_a)$ for $a \neq b \in \{1,2,3\}$. 
Having $(X_1,X_2,X_3) \in \ker \widetilde{\mu}$ thus forces $\mu_b(X_a)=0$ when $a \neq b \in \{1,2,3\}$. By \eqref{iso-S} 
it follows that $X_1=X_2=X_3=0$ hence
$\widetilde{\mu}$ is injective. To show that $\widetilde{\mu}$ is surjective pick $f \in \ker(\Delta_{\Hh}-8n)$ and recall that $f \in C_2^{\infty}M$ by Theorem \ref{thm1-half}, in particular $\C f = 8 f$. But $8$ is
the Casimir eigenvalue of the adjoint representation $(\pi_1,U_1)$ of $\su(2)$. Hence, the finite dimensional  $\su(2)$ representation space 
$\ker (\Delta_{\Hh} - 8n)$ decomposes into a finite sum of $U_1$'s. Choose a basis $\{f_1, f_2, f_3\}$ in one of these summands such that 
$\L_af_a=0, \L_{\xi_a}f_b=-\L_{\xi_b}f_a=2f_c$, that is the triple $(f_1, f_2, f_3)$ belongs to $ C^\infty_{\pi_1} (M,U_1)$. As Proposition \ref{tri-m2} ensures that $(f_1,f_2,f_3)=\mu(X)$ with $X\in \gg$, any linear combination 
$\lambda_1f_1+\lambda_2f_2+\lambda_3f_3=\mu_1(\lambda_1X)+\mu_2(\lambda_2X)+\mu_3(\lambda_3X)$.  Repeating this argument 
for all copies of $U_1$ possibly occurring in $\ker (\Delta_{\Hh}-8n)$ proves surjectivity for $\widetilde{\mu}$.
\end{proof}
$\\$
{\bf{Proof of Theorem \ref{main1}}} Follows by combining the estimate in Theorem \ref{thm1-half} and Theorem \ref{lim-f} above.\\

Further on, we denote with $\mathcal H_l(\RM^3)$ the space of harmonic polynomials on $\bbR^3$ which are homogeneous of degree $l$. 
As a step towards proving Theorem \ref{main3}, we prove that using these polynomials many more eigenfunctions for $\Delta_{\Hh}$ can be generated from the components of the tri-moment map.

\begin{propn} \label{poly}
For any pair $(P,X)$ in $\mathcal H_l(\RM^3) \times \gg$
we have 
$$P(\mu(X)):=P(\mu_1(X),\mu_2(X),\mu_3(X)) \in \ker(\Delta_{\Hh}-8ln) \cap C^\infty_{2l} M.$$
\end{propn}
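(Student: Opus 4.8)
The plan is to induct on the degree $l$, using the eigenvalue equation $\Delta_{\Hh}\mu_a(X)=8n\mu_a(X)$ and the fact that $\d_{\Hh}\mu_a(X)\in\Omega^1_4\!\Hh$ (both established in the proof of Proposition \ref{tri-m2}), together with a careful bookkeeping of how $\Delta_{\Hh}$ acts on products of such functions. The case $l=0$ is trivial and $l=1$ is Proposition \ref{tri-m2} combined with the observation that harmonic degree-one polynomials in $\mu(X)$ are just linear combinations of the $\mu_a(X)$. For the inductive step I would first record a Leibniz-type formula: for $u,v\in C^{\infty}M$ one has $\Delta_{\Hh}(uv)=(\Delta_{\Hh}u)v+u\Delta_{\Hh}v-2g_{\Hh}(\d_{\Hh}u,\d_{\Hh}v)$, since $\Delta_{\Hh}=\di_{\Hh}^{\star}\di_{\Hh}$ on functions and $\di_{\Hh}(uv)=u\,\di_{\Hh}v+v\,\di_{\Hh}u$. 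Iterating, for a monomial $\mu_1(X)^{p}\mu_2(X)^{q}\mu_3(X)^{s}$ of total degree $l$ the first two terms contribute $8ln$ times the monomial, so the whole obstruction to being an $8ln$-eigenfunction is the sum of the cross terms $-2g_{\Hh}(\d_{\Hh}\mu_a(X),\d_{\Hh}\mu_b(X))$ weighted by the appropriate second-derivative coefficients of $P$; these are the terms that, for a general polynomial, assemble into $-2\sum_{a,b}\big(\partial_a\partial_b P\big)(\mu(X))\,g_{\Hh}(\d_{\Hh}\mu_a(X),\d_{\Hh}\mu_b(X))$ plus the genuinely second-order piece $\sum_a\big(\partial_a P\big)(\mu(X))\,\Delta_{\Hh}\mu_a(X)=8n\sum_a(\partial_aP)(\mu(X))\mu_a(X)$, which by Euler's identity equals $8nl\cdot P(\mu(X))$ and is exactly what we want.

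So the real content is the pointwise identity $g_{\Hh}(\d_{\Hh}\mu_a(X),\d_{\Hh}\mu_b(X))=c\,\delta_{ab}$ for some function $c$ independent of $a,b$ — once this holds, the cross-term sum collapses to $-2c\,\Delta_{\bbR^3}P(\mu(X))$, which vanishes precisely because $P$ is harmonic, and the proof is complete. To prove this identity I would use the normalisation \eqref{inf2}, namely $I_1\d_{\Hh}\mu_1(X)=I_2\d_{\Hh}\mu_2(X)=I_3\d_{\Hh}\mu_3(X)$: applying $I_a$ (which is a $g_{\Hh}$-isometry on $\Omega^1\!\Hh$) shows that $\d_{\Hh}\mu_a(X)$ all have the same norm, giving $g_{\Hh}(\d_{\Hh}\mu_a(X),\d_{\Hh}\mu_a(X))=c$. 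For the off-diagonal vanishing I would combine \eqref{inf2} with the quaternion relations $I_aI_b=I_c$: writing $\d_{\Hh}\mu_b(X)=I_bI_a\,I_a^{-1}\d_{\Hh}\mu_b(X)$ and using $I_a\d_{\Hh}\mu_a(X)=I_b\d_{\Hh}\mu_b(X)$ one expresses $g_{\Hh}(\d_{\Hh}\mu_a(X),\d_{\Hh}\mu_b(X))$ as $g_{\Hh}$ of a form with its image under $I_c$, which is zero since $g_{\Hh}(\beta,I_c\beta)=0$ for any $\beta$. The membership $P(\mu(X))\in C^{\infty}_{2l}M$ follows separately and more easily: since $\mu(X)\in C^{\infty}_{\pi_1}(M,U_1)$, the $\su(2)$-action on the triple $(\mu_1(X),\mu_2(X),\mu_3(X))$ is by the standard representation, hence on degree-$l$ polynomials it acts via $\Sym^l U_1$; the harmonic ones $\mathcal H_l(\bbR^3)$ form the irreducible summand $U_l$, on which the Casimir $\C$ acts by $2l(2l+2)$, i.e. $P(\mu(X))\in\ker(\C-2l(2l+2))=C^{\infty}_{2l}M$.

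The main obstacle I anticipate is organising the Leibniz expansion cleanly enough that the collapse to $\Delta_{\bbR^3}P$ is transparent rather than a morass of multinomial coefficients; the slick route is to avoid monomials altogether and work directly with the operator identity $\Delta_{\Hh}(P(u_1,u_2,u_3))=\sum_a(\partial_aP)(u)\Delta_{\Hh}u_a-\sum_{a,b}(\partial_a\partial_bP)(u)\,g_{\Hh}(\d_{\Hh}u_a,\d_{\Hh}u_b)$ valid for any smooth $P$ and functions $u_a$, which is just the chain rule for $\di_{\Hh}$ plus the product rule above, and then substitute $u_a=\mu_a(X)$. A secondary point to be careful about is that one only controls $\d_{\Hh}\mu_a(X)$, not the full $\di\mu_a(X)$, so all computations must be done with the horizontal Laplacian and horizontal gradient throughout — but since $P(\mu(X))$ is built from functions whose relevant structure is horizontal and the lemmas of section \ref{fctns} are stated for $\Delta_{\Hh}$, this is exactly the natural setting and should cause no trouble.
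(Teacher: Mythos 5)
Your proposal is correct and follows essentially the same route as the paper: the chain rule for $\Delta_{\Hh}$, the eigenvalue equation $\Delta_{\Hh}\mu_a(X)=8n\mu_a(X)$ together with Euler's identity for the first-order term, and the orthogonality and equal norms of the horizontal gradients (from \eqref{inf2}) to collapse the second-order term to a multiple of $\Delta_{\bbR^3}P$, which vanishes by harmonicity. The only divergence is the membership $P(\mu(X))\in C^{\infty}_{2l}M$, which you obtain by $\su(2)$-equivariance and the identification $\mathcal H_l(\bbR^3)\cong U_l$, whereas the paper does a direct chain-rule computation for $\C$ using the radial derivative; both arguments are sound.
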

\begin{proof}
Whenever $\varphi \in C^{\infty}\bbR^3$ we indicate with $\varphi_{x_i}, \varphi_{x_jx_k}$ its partial derivatives up to second order and with $\partial_R\varphi=\sum_a x_a\varphi_{x_a}$ its radial derivative. By calculus 
\begin{equation*}
\frac{1}{4}\C \varphi(\mu(X))=\Vert \mu(X) \Vert^2 (\Delta_{\bbR^3}\varphi)\mu(X)+(\partial_R^2\varphi+\partial_R\varphi)\mu(X)
\end{equation*}
after taking into account the $\su(2)$ invariance properties of the tri-moment map as encoded in $\mu(X) \in C^{\infty}_{\pi_1}(M,U_1)$. For instances when 
$\varphi=P \in \mathcal H_l(\RM^3)$ we have $\Delta_{\bbR^3}\varphi=0$ and  $\partial_R\varphi=l\varphi$ thus $P(\mu(X))$ belongs to 
$C_{2l}^{\infty}M$. To prove the eigenvalue equation for $P(\mu(X))$ record that the horizontal gradients of $\mu_1(X),\mu_2(X), \mu_3(X)$ are orthogonal and have equal norm; this follows from \eqref{inf2}. Again by the chain rule  
\begin{equation*}
\begin{split}
\Delta_{\Hh}\ \varphi(\mu(X))=&\sum_a \varphi_{x_a}(\mu(X))\Delta_{\Hh}\mu_a(X)-\sum_{1 \leq a,b \leq 3}\varphi_{x_ax_b}(\mu(X))\langle \di_{\Hh}\mu_a(X), \di_{\Hh}\mu_b(X)\rangle\\
=&8n (\partial_R\varphi)\mu(X)+\Vert \di_{\Hh}\mu_1(X) \Vert^2(\Delta_{\bbR^3}\varphi)\mu(X).
\end{split}
\end{equation*}
Letting $\varphi=P \in \mathcal H_l(\RM^3)$ ensures that $P(\mu(X))$ belongs to $ \ker(\Delta_{\Hh}-8nl)$.
\end{proof}
$\\$
{\bf{Proof of Theorem \ref{main3}.}} Recall that $n=1$ in that setup. By assumption we know that $\Aut(M,g)=\Aut(M,\xi) \times \SO(3)$, thus the group 
$\SO(3)$ acts on $C^{\infty}M$. Since the representations $(\rho_l,V_l)$ of $\su(2)$ do not induce representations for the group $\SO(3)$, we conclude that $C_{m}^{\infty}M=0$ for $m\in 2\mathbb{N}+1$. On $C_0^{\infty}M$ we have $\Delta_{\Hh}>16$ by Corollary \ref{basic1}, since 
$g$ does not have constant sectional curvature. On $C_2^{\infty}M \cap \ker(\Delta_{\Hh}-8)^{\perp}$ Proposition \ref{est2} ensures that $\Delta_{\Hh}\geq 32>16$. Finally, on $C_m^{\infty}M, m \geq 4$ we have that $\Delta_{\Hh} \geq 4m\geq 16$, with equality when $m=4$, by using again Proposition \ref{est-f1}. These considerations show that $\lambda_2(\Delta_{\Hh})=16$ with limiting eigenspace $\ker(\Delta_{\Hh}-16) \subseteq 
C^{\infty}_4M$. Since $\H_2(\bbR^3) \cong \Sym^2_0\bbR^3$ Proposition \ref{poly} 
shows,  after polarisation, that the image of the linear map $\Sym^2\g \otimes \Sym^2_0\bbR^3 \to C^{\infty}M$ 
given by 
\begin{equation} \label{map14}
S \otimes \beta \mapsto \tr (\beta(\mu \circ S,\mu)+\beta(\mu, \mu \circ S))
\end{equation} is contained in 
$\ker(\Delta_{\Hh}-16) \cap C^{\infty}_4M$. Here the trace is taken with respect to the $L^2$ inner product on $\g$. This concludes 
the proof of Theorem \ref{main3}.\\ 

As promised in the introduction we show that the map in \eqref{map14} is non-zero on specific elements as follows.
\begin{exo} \label{ex1}Let $X \in \gg$. Purely algebraic arguments based on the $\su(2)$ invariance properties of $\mu$ in \eqref{inf1} show, after integration by parts, that the system 
$ \{\mu_1(X)\mu_2(X),\mu_2(X)\mu_3(X),\mu_3(X)\mu_1(X) \}$  
is $L^2$ orthogonal in $C_4^{\infty}M$ and also that 
$$ \int_M \mu_1^2(X)\mu_2^2(X)\vol= \int_M \mu_2^2(X)\mu_3^2(X)\vol=\int_M \mu_3^2(X)\mu_2^2(X)\vol=\frac{1}{3}\int_M\mu_a^4(X)\vol
$$
for $a \in \{1,2,3\}$. In addition the system $\{\mu_1^2(X),\mu_2^2(X),\mu_3^2(X)\}$ is $L^2$ orthogonal to $ \{\mu_1(X)\mu_2(X),\mu_2(X)\mu_3(X),\mu_3(X)\mu_1(X) \}$. A direct $L^2$ orthogonality argument based on these facts shows that the element $\lambda_1\mu_2(X)\mu_3(X)+
\lambda_2\mu_3(X)\mu_1(X)+\lambda_3\mu_1(X)\mu_2(X)$ in 
$\Ker(\Delta_{\Hh}-16n) \cap C_4^{\infty}M$, where $\lambda=(\lambda_1,\lambda_2,\lambda_3) \in \bbR^3$, is non-zero unless $\lambda=0$.
\end{exo}
\subsection{Lower bounds for Riemannian Laplacians} \label{crmk} 
As a by-product of the techniques used to prove Theorem \ref{main1} we derive estimates for the first non-zero eigenvalue $\lambda_1(\Delta^{g_T})$ of the scalar Riemannian Laplacian $\Delta^{g_T}$ where $g_T:=T^2\sum \limits_{a=1}^3\xi^a \otimes \xi^a+g_{\Hh}, T>0$ is the canonical variation of the $3$-Sasaki metric, $g=g_1$. 
\begin{thm} \label{RL-T}
Let $(M^{4n+3},g,\xi)$ be a compact $3$-Sasaki manifold such that $g$ does not have constant sectional curvature. The scalar Riemannian Laplacian of $g_T$ satisfies 
\begin{equation*}
\lambda_1(\Delta^{g_T}) \geq 8(n+T^{-2})
\end{equation*} 
for all $T>0$. The limiting eigenspace $\ker(\Delta^{g_T}-8(n+T^{-2}))$ is isomorphic to $\g \oplus \g \oplus \g$ via the map $\widetilde{\mu}$ from Theorem \ref{lim-f}.
\end{thm}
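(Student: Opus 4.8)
\textbf{Proof plan for Theorem \ref{RL-T}.} The strategy is to reduce the Riemannian eigenvalue problem for $g_T$ to the already-established estimates for the sub-Laplacian $\Delta_{\Hh}$ via the $\su(2)$-equivariant decomposition of $C^\infty M$ into the weighted spaces $C^\infty_m M$. The starting observation is that the canonical-variation metric $g_T$ has vertical distribution $\Vv$ rescaled by $T^2$ but leaves $\Hh$ untouched; consequently its scalar Laplacian decomposes as $\Delta^{g_T} = \Delta_{\Hh} + T^{-2}\C$ on $C^\infty M$, in complete analogy with Lemma \ref{s2l1}(iii) (which is the case $T=1$). Indeed $\Delta_{\Hh}$ is built purely from $\dH,\dH^\star$, which only see $g_{\Hh}$, while the vertical contribution to the Laplacian is governed by the Casimir operator $\C=-\sum_a\L_{\xi_a}^2$ and scales with the inverse square of the vertical metric scaling. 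I would record this identity as a short lemma (or simply as a line in the proof, citing the $T=1$ computation and the scaling behaviour).

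Granting this, the proof proceeds by working on each $\su(2)$-isotypic summand, exactly as in the proof of Theorem \ref{thm1-half}. On $C^\infty_m M$ the Casimir $\C$ acts as $m(m+2)$, so $\Delta^{g_T} = \Delta_{\Hh} + T^{-2}m(m+2)$ there. For $m\geq 1$, Proposition \ref{est-f1} gives $\Delta_{\Hh}\geq 4nm$, hence $\Delta^{g_T}\geq 4nm + T^{-2}m(m+2)\geq 4n + 3T^{-2} \geq 8(n+T^{-2})$ precisely when $m\geq 2$ (the function $m\mapsto 4nm+T^{-2}m(m+2)$ is increasing in $m$, and at $m=2$ it equals $8n+8T^{-2}$). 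The case $m=1$ is the genuinely delicate one and must be handled separately: here $\C$ acts as $3$, so $\Delta^{g_T}\geq 4n + 3T^{-2}$ from the crude bound, which is strictly less than $8n+8T^{-2}$ for $n\geq 1$. To recover the sharp bound on $C^\infty_1 M$ one uses Proposition \ref{est-f1}(ii): since $g$ does not have constant sectional curvature, $\ker(\Delta_{\Hh}-4n)\cap C^\infty_1 M = 0$, so the next eigenvalue of $\Delta_{\Hh}$ on $C^\infty_1 M$ is governed by Proposition \ref{est2} with $m=1$, giving $\Delta_{\Hh}\geq 12(n+1)$; then $\Delta^{g_T}\geq 12(n+1) + 3T^{-2}$, and one checks $12(n+1)+3T^{-2} \geq 8(n+T^{-2})$, i.e. $12n+12 \geq 8n+5T^{-2}$ — hmm, this fails for small $T$, so the naive argument on $C^\infty_1 M$ is insufficient and one must instead argue that the spectral contribution of $C^\infty_1 M$ to $\Delta^{g_T}$ below $8(n+T^{-2})$ is empty directly. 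The correct route: on $C^\infty_1 M$ the operator $\Delta^{g_T}$ has the same eigenfunctions as $\Delta_{\Hh}$ (both commute with $\C$ which is scalar there), so its smallest eigenvalue is $\lambda_1(\Delta_{\Hh}\mid_{C^\infty_1 M}) + 3T^{-2} \geq 4n + 3T^{-2}$ and equality in the sub-Laplacian bound is excluded; but to beat $8(n+T^{-2})$ I would instead observe that for $m=1$ the bound we actually need is $4n+3T^{-2}\geq 8(n+T^{-2})$, false, so the clean statement must be that the limiting eigenspace sits in $C^\infty_2 M$ and the $m=1$ functions simply do not attain the bound — this requires showing $\Delta_{\Hh}\mid_{C^\infty_1 M}$ has no eigenvalue making the total equal to $8(n+T^{-2})$, equivalently $4n+3T^{-2} < 8(n+T^{-2})$ forces the first eigenfunction of $\Delta^{g_T}$ on $C^\infty_1 M$ to possibly lie below the claimed bound, contradicting the theorem — so in fact the argument must use that \emph{no} non-constant function in $C^\infty_1 M$ has $\Delta_{\Hh}$-eigenvalue $4n$, hence $\Delta_{\Hh}\geq 12(n+1)$ there by Proposition \ref{est2}, and one needs $12(n+1)+3T^{-2}\geq 8(n+T^{-2})$, i.e. $4n+12\geq 5T^{-2}$, which holds for all $T$ only if... this is the crux and I would need to look carefully at whether an additional gap refinement (e.g. an improved bound on $C^\infty_1 M$ analogous to Proposition \ref{est2} but exploiting Obata more strongly) is needed.

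Assuming the $m=1$ obstruction is resolved, the remaining two pieces are routine. On basic functions $C^\infty_0 M$ one has $\C=0$, so $\Delta^{g_T}=\Delta_{\Hh}=\Delta_B$ there, and Corollary \ref{basic1} gives $\Delta_B\geq 8(n+1) > 8(n+T^{-2})$ for $T>1$, with the strict inequality (non-constant sectional curvature case) ensuring basic functions never contribute to the limiting eigenspace; for $T\leq 1$ one still has $8(n+1)\geq 8(n+T^{-2})$ iff $T\geq 1$, so for $T<1$ a small extra argument is needed — but here too the strict version of Corollary \ref{basic1} plus the observation that basic functions are too high up should suffice, since $8(n+1)$ versus $8(n+T^{-2})$ with $T<1$ means $T^{-2}>1$ and $8(n+T^{-2})$ could exceed $8(n+1)$; this shows the limiting eigenspace for $T<1$ genuinely could include basic functions a priori, so one must invoke the \emph{strict} inequality $\lambda_1(\Delta_B)>8(n+1)$ only when that helps, and otherwise note the bound $8(n+T^{-2})$ is what we are proving, not $8(n+1)$. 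Let me restate cleanly: the claimed lower bound $8(n+T^{-2})$ must be checked summand by summand, $m=0$ gives $\geq 8(n+1)$ which is $\geq 8(n+T^{-2})$ for $T\geq1$ and requires separate attention for $T<1$; $m=1$ is the hard case; $m\geq 2$ is immediate. Finally, for the limiting eigenspace: equality $\Delta^{g_T}=8(n+T^{-2})$ can only hold on $C^\infty_2 M$ (where $\C=8$ and one needs $\Delta_{\Hh}=8n$), so $\ker(\Delta^{g_T}-8(n+T^{-2})) = \ker(\Delta_{\Hh}-8n)\cap C^\infty_2 M = \ker(\Delta_{\Hh}-8n)$, and Theorem \ref{lim-f} identifies this with $\gg\oplus\gg\oplus\gg$ via $\widetilde\mu$. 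The main obstacle is clearly the $m=1$ case: controlling the contribution of $C^\infty_1 M$ to the $g_T$-spectrum uniformly in $T$, where the crude sub-Laplacian bound and even the Proposition \ref{est2} refinement may not immediately give $\geq 8(n+T^{-2})$ for all $T>0$, so one likely needs either a sharper gap estimate on $C^\infty_1 M$ or a direct argument (perhaps via Obata applied to the Einstein metric $g$ itself, or via the cone) ruling out low eigenvalues there.
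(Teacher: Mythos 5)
Your strategy is exactly the paper's: the published proof of Theorem \ref{RL-T} is a three-line sketch which writes $\Delta^{g_T}=\Delta_{\Hh}+T^{-2}\C$ and then ``combines'' Propositions \ref{est-f1} and \ref{est2} with Corollary \ref{basic1} on the summands $C^{\infty}_mM$ --- precisely the reduction you carry out. The two places where you get stuck are therefore not defects of your write-up but genuine gaps in the argument as the paper presents it, and you have quantified them correctly. On $C^{\infty}_1M$ the quoted ingredients give $\Delta^{g_T}\geq 12(n+1)+3T^{-2}$, which dominates $8(n+T^{-2})$ only when $T^{-2}\leq\tfrac{4n+12}{5}$; on $C^{\infty}_0M$ they give $\Delta^{g_T}=\Delta_B\geq 8(n+1)$ (strict), which dominates $8(n+T^{-2})$ only when $T\geq 1$. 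For $T\geq 1$ everything closes: for $m\geq 2$ one has $4nm+T^{-2}m(m+2)-8n-8T^{-2}=(m-2)\bigl(4n+T^{-2}(m+4)\bigr)\geq 0$ with equality exactly at $m=2$ on $\ker(\Delta_{\Hh}-8n)$, so the limiting eigenspace is $\IM\widetilde{\mu}\cong\gg\oplus\gg\oplus\gg$ by Theorem \ref{lim-f}, as you say. But for small $T$ no refinement can rescue the statement on the basic summand: $\Delta^{g_T}$ restricted to basic functions does not depend on $T$, so $\lambda_1(\Delta^{g_T})\leq\lambda_1(\Delta_B)$, a fixed finite number, while $8(n+T^{-2})\to\infty$ as $T\to 0$. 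Hence the bound as stated cannot hold for all $T>0$; it holds verbatim for $T\geq 1$, and for $T<1$ (in particular for the second Einstein metric $T=1/\sqrt{2n+3}$, where the $m=1$ inequality $18n+21\geq 24n+24$ already fails) one would need manifold-dependent information on $\lambda_1(\Delta_B)$ and on the spectrum of $\Delta_{\Hh}$ on $C^{\infty}_1M$ that the cited propositions do not supply.

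In short, your proposal faithfully reproduces the paper's proof, and the honest flags you raise at $m=0$ and $m=1$ expose a real problem rather than a missing trick on your part. Were you writing this up, the defensible statement is the one your argument actually proves: the estimate and the identification of the limiting eigenspace for $T\geq 1$, with the range $T<1$ requiring additional hypotheses or a weaker bound.
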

\begin{proof}
The argument is entirely similar to the one in the proof of Theorem \ref{main1}. First we use the relation $\Delta^{g_T}=\Delta_{\Hh}+T^{-2}\C$ and then we combine the estimates for $\Delta_{\Hh}$ in Propositions \ref{est-f1} and \ref{est2} with the estimate for the basic Laplacian in Corollary \ref{basic1}.
\end{proof}

This estimate is particularly relevant for the Einstein metrics in the canonical variation $g_T$ which are obtained for $T=1$ and 
$T=\frac{1}{\sqrt{2n+3}}$. When $T=1$ it reads 
\begin{equation} \label{estRR1}
\lambda_1(\Delta^g) \geq 8(n+1).
\end{equation}
In this case the cone metric is hyperk\"ahler, in particular Ricci flat K\"ahler. In the next section we will show how a different proof of
\eqref{estRR1} can be obtained, after a few additional arguments, from the gap theorem for Ricci flat K\"ahler cones.
\begin{rem} \label{indep}
The estimate $\lambda_1(\Delta_{\Hh}) \geq 8n$ from Theorem \ref{main1} does not follow from \eqref{estRR1}. Indeed $\Delta_{\Hh}=\Delta^{g}-\C$ and the operator $\C\geq 0$ is unbounded. Nonetheless, once Theorem \ref{RL-T} is established for all $T>0$,  
the aforementioned estimate can be recovered by letting $T \to \infty$.
\end{rem}
When $T=\frac{1}{\sqrt{2n+3}}$ the geometry of the cone metric is very different, namely hyperk\"ahler with torsion, a fact which essentially follows from  \cite{FFU-V}. Also note that $g_{\q}$ is no longer Ricci flat, whereas this is the case for the metric $a^2(\di\!r)^2+r^2g_T$ for the correct choice of $a \in \bbR$. For this value of $T$ we get the estimate 
\begin{equation*} \label{estR1}
\lambda_1(\Delta^{g_T}) \geq 24(n+1)
\end{equation*}
which we believe to be new. 
\begin{rem} \label{stab-half}
Using the well known formula $\scal^{g_T}=16n(n+2)+6T^{-2}-12nT^2$ for $T>0$ (see \cite{BoGa}) one can check that for both these values of $T$ 
we have $\lambda_1(\Delta^{g_T})>2E_T$ with $E_T$ the Einstein constant of $g_T$. This estimate is half of the requirements, see \cite{Cao-He}[Corollary 1.3], needed to ensure $\nu$-stability for the Einstein metric $g_T$; the second requirement, that is having the Lichnerowicz Laplacian bounded below by $2E_T$ on $TT$-tensors, is however violated as it follows from \cite[Chapter 9, Fig.9.72]{Besse}.
\end{rem}
\section{Gap phenomena and holomorphic functions } \label{sectn4}
\subsection{Gap theorem for hyperk\"ahler cones} \label{gap-hk}
Recall that functions $f\in C^{\infty}CM $ are called $\tau$-homogeneous  provided that $\L_{r\partial_r}f=\tau f$ for some $\tau \in \bbR$ which will be called the growth rate of $f$. Equivalently the function $\tilde{f}:=r^{-\tau}f$ belongs to $C^{\infty}M$. The additional condition $\Delta^{g_{\q}}f=0$ translates, by using the comparison formula \eqref{lapl-c0}, into the eigenvalue 
equation 
\begin{equation} \label{eqn-57}
\Delta^g\tilde{f}=\tau(\tau+4n+2)\tilde{f}.
\end{equation}
This explains how eigenfunctions for $\Delta^g$ lift to harmonic functions on the cone. In addition, estimates for the scalar Laplacian, are equivalent to growth rate estimates; for instance having $\lambda_1(\Delta^g) \geq 8(n+1)$ as in \eqref{estRR1} is equivalent to 
$\tau \geq 2$. To put these observations into perspective recall the following result \cite{CoH,HS} which deals with growth rate estimates in the more general case of Ricci flat K\"ahler cones, that is metric cones over compact Einstein Sasaki manifolds. 
\begin{thm} \label{gap-HS} Let $CN$ be a Ricci flat, non-flat, K\"ahler cone and let $f:CN \to \bbR$ be harmonic and $\tau$-homogeneous. Then
 $\tau>1$ and 
\begin{itemize}
\item[(i)] in case $\tau<2$ the function $f$ is the real part of a holomorphic function; in particular $\L_{\zeta}^2f=-\tau^2f$ where 
$\zeta$ is the Reeb vector field of the cone
\item[(ii)] in case $\tau=2$, the function $f=r^2f_1+f_2$ where $f_1 \in C^{\infty}N$ is a Killing potential and $f_2$ is the real part of a $2$-homogeneous holomorphic function on $CN$.
\end{itemize}
\end{thm}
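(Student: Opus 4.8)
The plan is to push everything down to the compact link and feed in two classical rigidity statements there. Write $\dim_{\CM}CN=p$, so that $N=N^{2p-1}$ is Einstein--Sasaki with $\Ric^{g_N}=(2p-2)g_N$ and Reeb field $\zeta=J(r\partial_r)$; by the comparison formula \eqref{lapl-c0} a $\tau$-homogeneous function $f=r^{\tau}\tilde f$ on $CN$ is harmonic precisely when $\Delta^{g_N}\tilde f=\tau(\tau+2p-2)\tilde f$ on $N$. Since $N$ is compact this eigenspace is finite dimensional; moreover $\mathrm{i}\,\L_{\zeta}$ is self-adjoint and commutes with $\Delta^{g_N}$, so I split $\tilde f=\sum_{\lambda}\tilde f_{\lambda}$ into finitely many $\zeta$-charges, $\L_{\zeta}\tilde f_{\lambda}=\mathrm{i}\lambda\tilde f_{\lambda}$, with $\overline{\tilde f_{\lambda}}=\tilde f_{-\lambda}$ because $f$ is real, and treat the pieces separately. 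Each $r^{\tau}\tilde f_{\lambda}$ is again a $\tau$-homogeneous harmonic function on $CN$.

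For the bound $\tau>1$, nonnegativity of $\Delta^{g_N}$ leaves only $\tau=0$ --- which forces $\tilde f$, hence $f$, constant --- or $\tau$ bounded away from zero. Applying the Lichnerowicz--Obata theorem to the Einstein manifold $N$ (of real dimension $2p-1$ with $\Ric^{g_N}=(2p-2)g_N$) gives $\lambda_1(\Delta^{g_N})\ge 2p-1$, with equality exactly when $N$ is a round sphere, that is when $CN$ is the flat cone $\CM^{p}$. Since $CN$ is non-flat by hypothesis the inequality is strict, so $\Delta^{g_N}$ has no eigenvalue in $(0,2p-1]$; as $\tau\mapsto\tau(\tau+2p-2)$ equals $2p-1$ at $\tau=1$ and increases for $\tau>0$, we conclude $\tau>1$.

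Now assume $1<\tau\le 2$. On $N$ one has $\Delta^{g_N}=\Delta_{B}-\L_{\zeta}^{2}$ on functions, where $\Delta_{B}$ is the basic Laplacian of the $\zeta$-Sasaki structure, whose transverse metric is K\"ahler--Einstein with $\Ric^{T}=2p\,g^{T}$; hence $\Delta_{B}\tilde f_{\lambda}=\bigl(\tau(\tau+2p-2)-\lambda^{2}\bigr)\tilde f_{\lambda}$. For the basic part ($\lambda=0$, nonconstant) the transverse Lichnerowicz--Matsushima estimate gives $\Delta_{B}\ge 4p$, with equality exactly on Killing potentials $f_1$ on $N$ (solutions of $\Delta_{B}f_1=4p\,f_1$; compare \eqref{iso-S}); since $\tau(\tau+2p-2)<4p$ for $\tau<2$, no nonconstant basic part occurs when $\tau<2$, while for $\tau=2$ it is a Killing potential $f_{1}$, contributing the term $r^{2}f_{1}$. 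For a charged part ($\lambda\ne 0$) one regards $\tilde f_{\lambda}$ as a section of the transverse line bundle $L^{\lambda}$ carrying the induced Hermitian--Einstein metric (an honest object on the transverse orbifold in the quasi-regular case, and handled on $CN$ or via the transverse holomorphic foliation in general): a Bochner--Kodaira--Matsushima analysis bounds the associated Laplacian below by $(2p-2)|\lambda|$, attained exactly by holomorphic ($\lambda>0$) respectively antiholomorphic ($\lambda<0$) sections, with a spectral gap above this bottom eigenvalue that is wide enough --- of order $4p$ --- that for $\tau\le 2$ and $\lambda\ne 0$ the value $\tau(\tau+2p-2)-\lambda^{2}$ can only be the bottom one. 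Hence the surviving charged pieces have $|\lambda|=\tau$, so $F:=r^{\tau}\tilde f_{\tau}$ is a holomorphic function on $CN$ and $r^{\tau}\tilde f_{-\tau}=\overline F$.

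Assembling the pieces: for $\tau<2$ only the charges $\pm\tau$ survive, so $f=F+\overline F=2\re F$ with $F$ holomorphic and $\tau$-homogeneous, and $\L_{\zeta}F=\L_{J(r\partial_r)}F=\mathrm{i}\,\L_{r\partial_r}F=\mathrm{i}\tau F$ yields $\L_{\zeta}^{2}f=-\tau^{2}f$ --- this is (i); for $\tau=2$ one gets in addition the basic Killing-potential term, $f=r^{2}f_{1}+f_{2}$ with $f_{2}=2\re F$ --- this is (ii). The main obstacle, and the step I would be most careful about, is the Bochner--Kodaira--Matsushima gap for charged sections: one must show not merely that (anti)holomorphic sections lie at the bottom of the spectrum but that nothing lies strictly between them and a threshold of size comparable to $4p$, and this uniformly over quasi-regular and irregular Sasaki structures, where $L^{\lambda}$ is only a transverse or cone object. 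A route closer to the techniques already used in this paper, and to \cite{CT}, is to replace this by an integrated Weitzenb\"ock argument in the style of Lemma \ref{l23-ne} and Proposition \ref{Lco-2}, applied to the harmonic $(0,1)$-form $\bar\partial f$ (equivalently to the primitive harmonic $(1,1)$-form $\mathrm{i}\partial\bar\partial f$) on the Ricci-flat cone, exploiting the holomorphic Euler field $r\partial_r-\mathrm{i}\zeta$ to control homogeneity; the two approaches meet at the same analytic core, and the remaining ingredients (Obata's theorem and the rigidity in Lichnerowicz--Matsushima) are classical.
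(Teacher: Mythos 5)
For calibration: the paper's own ``proof'' of Theorem \ref{gap-HS} is a two-line citation of Lemma 2.13 and Theorem 2.14 of \cite{HS}, so you are reconstructing from scratch an argument the authors deliberately outsourced. Much of your reconstruction is sound and matches the standard route: the reduction to the eigenvalue equation $\Delta^{g_N}\tilde f=\tau(\tau+2p-2)\tilde f$ on the link, the finite decomposition into Reeb charges, the Obata step giving $\tau>1$ (modulo explicitly ruling out the negative root $\tau\le -(2p-2)$, which requires whatever regularity at the apex is built into ``harmonic on $CN$''), and the treatment of the basic component via the transverse Lichnerowicz--Matsushima bound $\Delta_B\ge 4p$ with Killing potentials as the equality case.

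The genuine gap is exactly where you flag it: the charged components. The lower bound $(2p-2)|\lambda|$ with equality on transversally (anti)holomorphic sections is indeed a routine Bochner--Kodaira computation, but the assertion that there is a spectral gap ``of order $4p$'' above this bottom is not proved, and it is essentially the entire content of the theorem for the non-basic part. Concretely, the eigenvalue you must exclude exceeds the bottom by $(\tau-|\lambda|)(\tau+|\lambda|+2p-2)$, and since in the irregular case the charges $\lambda$ occurring in a fixed $\Delta^{g_N}$-eigenspace are not quantized, $|\lambda|$ may be arbitrarily small and the gap you need approaches $4p$ --- i.e.\ a full twisted Lichnerowicz--Matsushima theorem for sections of the merely transverse object $L^{\lambda}$, which is not available off the shelf. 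The way \cite{CoH,HS} actually close the argument is the alternative you sketch in your final paragraph: for $\tau\le 2$ the form $\mathrm{i}\partial\bar\partial f$ is a closed and coclosed $(1,1)$-form on the Ricci-flat cone, homogeneous of degree $\tau-2\le 0$, and a Cheeger--Tian-type integration by parts (the analogue of Lemma \ref{l23-ne} and Proposition \ref{Lco-2} here) forces it to vanish when $\tau<2$, giving pluriharmonicity and hence $f=\re F$, and to reduce to a transverse Killing potential when $\tau=2$. As written, your submission proves the basic part and the bound $\tau>1$ but leaves the central step as an unsupported claim; the sketch in your last paragraph is the proof, and the charged-line-bundle argument should be replaced by it rather than patched.
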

\begin{proof}
Follows by combining Lemma 2.13 and Theorem 2.14 in \cite{HS}.
\end{proof}
The main goal of this section is to obtain an improvement of Theorem \ref{gap-HS} in the case of hyperk\"ahler cones. 
This will be done using direct arguments based on the estimates in the previous sections. First, we recall the well-known fact that hyperk\"ahler cones are quasi-regular, in the following sense.
\begin{lemma} \label{reg-3}
Let $M$ be compact and equipped with a $3$-Sasaki structure $(g,\xi)$. The Sasaki-Einstein structure $(g,\xi_a)$ is quasi-regular in the sense that $\xi_a$ integrates to an effective action of 
the circle $\bbR\slash 2\pi r\mathbb{Z}$ with $r=1$ or $r=2$. In addition, the eigenvalues of $\L^2_{\xi_1}$ acting on $C^{\infty}M$ are $-r^2k^2$ with $ k \in \mathbb{Z}$.
\end{lemma}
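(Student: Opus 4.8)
The plan is to exponentiate the $\su(2)$-action carried by $\xi_1,\xi_2,\xi_3$ and to read off the period of the flow of $\xi_1$ from the structure constants \eqref{su2}. Since $M$ is compact the Killing fields $\xi_a$ are complete, so the $\su(2)$ they span integrates --- $\Sp(1)$ being compact and simply connected --- to a smooth action $\rho:\Sp(1)\to\Aut(M,g)$ whose fundamental vector fields are $\xi_1,\xi_2,\xi_3$ (up to the usual sign). Because the $\xi_a$ do not vanish, $\ker\rho$ is a proper normal subgroup of $\Sp(1)$, hence $\{1\}$ or $\{\pm 1\}$, so $G:=\rho(\Sp(1))$ equals $\Sp(1)$ or $\SO(3)$ (compare the dichotomy \eqref{dich}); put $r:=1$ in the first case and $r:=2$ in the second. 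By construction $G$ acts effectively on $M$, and $\mathrm{Lie}(G)$ is identified with $\spa\{\xi_1,\xi_2,\xi_3\}$.

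Next I would get quasi-regularity and the period simultaneously. By \eqref{su2} the endomorphism $\ad_{\xi_1}$ kills $\xi_1$ and acts as rotation by the angle $2t$ on $\spa\{\xi_2,\xi_3\}$, so it has the nonzero eigenvalues $\pm 2\i$; thus $\xi_1$ is a regular element, whence $\overline{\{\Exp{t\xi_1}\}}\subseteq G$ is a maximal torus, necessarily $1$-dimensional as $G$ has rank $1$. A connected one-parameter subgroup dense in a $1$-torus equals it, so $\{\Exp{t\xi_1}\}$ is a closed circle acting effectively on $M$ --- this is the claimed quasi-regularity. For the period, observe from the explicit form of $A_1$ in section~\ref{3sas-p} that, under $\mathrm{Lie}(G)\cong\su(2)\cong\so(3)$, $\xi_1$ corresponds to twice the standard generator of the circle of rotations about the first axis; the latter has period $2\pi$ in $\SO(3)$, so $\Exp{t\xi_1}$ has period $\pi$ in $\SO(3)$ and period $2\pi$ in its double cover $\Sp(1)$ (there $\Exp{\pi\xi_1}=-1$). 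Pushing forward by $\rho$, the flow of $\xi_1$ on $M$ is $t\mapsto\rho(\Exp{t\xi_1})$, whose minimal period is the least $t>0$ with $\Exp{t\xi_1}\in\ker\rho$, namely $2\pi$ when $r=1$ and $\pi$ when $r=2$; in both cases it equals $\tfrac{2\pi}{r}$.

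The eigenvalue statement then follows by Fourier analysis along the circle orbits. As $\L_{\xi_1}$ is skew-adjoint on $L^2M$ and generates the period-$\tfrac{2\pi}{r}$ one-parameter group $\rho(\Exp{t\xi_1})$, a nonzero $f\in C^{\infty}M$ with $\L^2_{\xi_1}f=-\mu^2f$, $\mu\geq 0$, spans together with $\L_{\xi_1}f$ a real $\L_{\xi_1}$-invariant plane on which $\L_{\xi_1}$ has eigenvalues $\pm\i\mu$; invariance of $f$ under the time-$\tfrac{2\pi}{r}$ map forces $\Exp{\i\mu\cdot 2\pi/r}=1$, i.e. $\mu\in r\ZM$. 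Hence every eigenvalue of $\L^2_{\xi_1}$ on $C^{\infty}M$ is of the form $-r^2k^2$ with $k\in\ZM$, as claimed.

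I do not anticipate a genuine obstacle: this is standard compact Lie group theory once the $\su(2)$-action has been integrated. The single delicate point is the normalisation --- because the brackets \eqref{su2} carry the factor $2$, the field $\xi_1$ is \emph{twice} a primitive generator of the relevant circle, which is precisely what forces $r=2$ (and not $r=\tfrac12$) in the $\SO(3)$ case and the exponent $r^2$ in the spectrum of $\L^2_{\xi_1}$; it must be tracked with care.
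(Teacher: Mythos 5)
Your proof is correct and follows essentially the same route as the paper's (very terse) argument: integrate the $\su(2)$ spanned by the $\xi_a$ to an effective action of $\Sp(1)$ or $\SO(3)$, identify the closed circle generated by $\xi_1$, and Fourier-decompose $C^{\infty}M$ along its orbits. One remark worth keeping: your bookkeeping gives the flow of $\xi_1$ minimal period $\tfrac{2\pi}{r}$ (namely $2\pi$ in the $\Sp(1)$ case and $\pi$ in the $\SO(3)$ case), i.e.\ an effective action of $\bbR/\tfrac{2\pi}{r}\ZM$ rather than the $\bbR/2\pi r\ZM$ written in the statement; your normalisation is the one consistent with the eigenvalue formula $-r^2k^2$ and with its later uses in Remark \ref{proof21} and Proposition \ref{hol-cone}, so the ``$2\pi r$'' in the lemma should presumably read ``$2\pi/r$''.
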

\begin{proof}
The first part of the claim follows by exponentiating within $\Sp(1)$ or $\SO(3)$, according to the dichotomy in \eqref{dich}. The second part follows by 
Fourier decomposition in $C^{\infty}M$ w.r.t. the action of the circle  $\bbR\slash 2\pi r\mathbb{Z}$.
\end{proof}
\begin{rem} \label{proof21}
Let $(M^{4n+3},g,\xi)$ be compact and $3$-Sasaki. Apply Theorem \ref{gap-HS} to the Ricci flat K\"ahler cone of the Sasaki-Einstein structure $(g,\xi_1)$. 
Since the eigenvalues of $\L^2_{\xi_1}$ acting on $C^{\infty}M$ are $-r^2k^2$ with $ k \in \mathbb{Z}$ and $r \in \{1,2\}$ (see Lemma \ref{reg-3}) the growth rate $\tau \in (1,2)$ for real valued functions on $CM$ is prohibited. Thus instances as in 
(i) in Theorem \ref{gap-HS} do not occur, i.e. $\tau \geq 2$ in that theorem and we obtain a second proof for the estimate $\lambda_1(\Delta^g) \geq 8(n+1)$. There remains to relate our description (see Theorem \ref{RL-T}) of the limiting eigenspace to the information in the gap Theorem \ref{gap-HS}; to that extent we essentially 
need to determine holomorphic functions on the cone. This will be done in the next section for arbitrary growth rate, not only for $\tau=2$.
\end{rem}
Below we assume that the $3$-Sasaki metric $g$ on the compact manifold $M$ does not have constant sectional curvature and prove the following 
hyperk\"ahler version of Theorem \ref{gap-HS}.
\begin{thm} \label{gap}
Assume that $f \in C^{\infty}CM$ satisfies $\Delta^{g_{\q}}f=0$ and $\L_{r\partial_r}f=\tau f$. The following hold 
\begin{itemize}
\item[(i)] we have $\tau \geq 2$ with equality when $r^{-2}f \in \IM \widetilde{\mu} \cong \g \oplus \g \oplus \g$
\item[(ii)] if $2 <\tau<3$ the function $f$ is $\su(2)$ invariant, $\L_{\xi_a}f=0$ for $ 1 \leq a \leq 3$
\item[(iii)] if $\tau=3$ we have $r^{-3}f \in C_0^{\infty}M \oplus C_1^{\infty}M \oplus C_3^{\infty}M$.
\end{itemize}
\end{thm}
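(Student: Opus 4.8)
The plan is to exploit the comparison formula $\Delta^{g_{\q}}=r^{-2}\Delta^g-\partial_r^2-(4n+3)r^{-1}\partial_r$ from \eqref{lapl-c0}, which translates the hypotheses into $\Delta^g\tilde f=\tau(\tau+4n+2)\tilde f$ for $\tilde f:=r^{-\tau}f\in C^{\infty}M$, and then to decompose $\tilde f$ along the $\su(2)$-weighted spaces $C^{\infty}_mM$. Writing $\tilde f=\sum_m\tilde f_m$ with $\tilde f_m\in C^{\infty}_mM$, and using $\Delta^g=\Delta_{\Hh}+\C$ together with $\C=m(m+2)$ on $C^{\infty}_mM$, each component satisfies $\Delta_{\Hh}\tilde f_m=\big(\tau(\tau+4n+2)-m(m+2)\big)\tilde f_m$. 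So the whole problem reduces to asking: for which pairs $(\tau,m)$ can the number $\tau(\tau+4n+2)-m(m+2)$ be an eigenvalue of $\Delta_{\Hh}$ on $C^{\infty}_mM$, given the lower bounds already proved?

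First I would record the constraints from the earlier estimates. For $m=0$: Corollary \ref{basic1} (with strictness, since $g$ is not of constant curvature) gives $\Delta_B>8(n+1)$ on non-constant basic functions, so either $\tilde f_0$ is constant (forcing $\tau=0$, excluded since $\tau>1$ must hold — or rather this component vanishes unless $\tau(\tau+4n+2)=0$) or $\tau(\tau+4n+2)>8(n+1)$, i.e. $\tau>2$. For $m=1$: Proposition \ref{est-f1}(ii) kills $\ker(\Delta_{\Hh}-4n)$, and Proposition \ref{est2} then gives $\Delta_{\Hh}\geq 12(n+1)$ on $C^{\infty}_1M$, so a nonzero $\tilde f_1$ forces $\tau(\tau+4n+2)-3\geq 12(n+1)$. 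For $m\geq 2$: Proposition \ref{est-f1} gives $\Delta_{\Hh}\geq 4nm$, with the refinement $\Delta_{\Hh}\geq(n+1)(4m+8)$ off the limiting eigenspace $\ker(\Delta_{\Hh}-4nm)\cap C^{\infty}_mM$ (Proposition \ref{est2}); and by Theorem \ref{thm1-half} plus Proposition \ref{tri-m2}/Theorem \ref{lim-f} the value $4nm$ is achieved on $C^{\infty}_mM$ only when $m=2$, where $\ker(\Delta_{\Hh}-8n)=\IM\widetilde{\mu}$. So for $\tilde f_m\neq 0$, $m\geq 2$, one needs $\tau(\tau+4n+2)-m(m+2)\geq(n+1)(4m+8)$ unless $m=2$ and $\tau(\tau+4n+2)-8=8n$, i.e. $\tau=2$.

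Now I would run the three cases. For (i): suppose $\tau<2$. Then $\tilde f_0$ must vanish by the $m=0$ analysis; for $m\geq 1$ the required inequalities $\tau(\tau+4n+2)-m(m+2)\geq\text{(positive bound)}$ all fail for $\tau<2$ since the left side is dominated by $\tau(\tau+4n+2)<2(4n+4)=8(n+1)$ while each right-hand bound is at least $8(n+1)$ when $m\geq 1$ (check $m=1$: need $\geq 12(n+1)-3$; $m=2$: need $\geq 8n$ exactly at $\tau=2$, and $>8n$ otherwise, but $\tau(\tau+4n+2)-8<8(n+1)-8=8n$ for $\tau<2$ — so indeed it fails; larger $m$ only worse). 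Hence $\tilde f=0$, contradiction, so $\tau\geq 2$; and $\tau=2$ forces $\tilde f=\tilde f_2\in\ker(\Delta_{\Hh}-8n)=\IM\widetilde{\mu}$. For (ii): if $2<\tau<3$, I claim only $m=0$ survives. For $m=1$ one needs $\tau(\tau+4n+2)\geq 12(n+1)+3$; since $\tau<3$, $\tau(\tau+4n+2)<3(4n+5)=12n+15=12(n+1)+3$, so it fails. For $m=2$ one needs $\tau(\tau+4n+2)\geq 8n+8$ or $=8n+8$ ($\tau=2$, excluded), but in the strict refinement regime need $\geq 16(n+1)+? $ — more carefully, off $\IM\widetilde\mu$ need $\tau(\tau+4n+2)-8\geq 16(n+1)$; and on $\IM\widetilde\mu$ need $\tau(\tau+4n+2)-8=8n$ i.e. $\tau=2$. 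Either way fails for $2<\tau<3$. For $m\geq 3$ the bound $4nm+m(m+2)$ or $(n+1)(4m+8)+m(m+2)$ exceeds $3(4n+5)$ when $n\geq 1$. So $\tilde f=\tilde f_0$ is basic, i.e. $\L_{\xi_a}f=0$. For (iii), $\tau=3$: now $m=0$ is allowed (basic, $\tau(\tau+4n+2)=3(4n+5)>8(n+1)$, fine); $m=1$ is allowed precisely when $3(4n+5)-3=12(n+1)$ equals $\Delta_{\Hh}$-eigenvalue $\geq 12(n+1)$ — equality, so $\tilde f_1\in\ker(\Delta_{\Hh}-12(n+1))\cap C^{\infty}_1M$, which is nonempty in general; $m=3$ is allowed when $3(4n+5)-15=12n$ equals $4n\cdot 3=12n$, i.e. the limiting eigenspace $\ker(\Delta_{\Hh}-12n)\cap C^{\infty}_3M$; and $m=2$ requires $3(4n+5)-8=12n+7$ to be a $\Delta_{\Hh}$-eigenvalue on $C^{\infty}_2M$, but that space has $\Delta_{\Hh}\geq 8n$ with next value $\geq 16(n+1)=16n+16>12n+7$ (for $n\geq 1$, since $16n+16-12n-7=4n+9>0$), so $m=2$ is excluded; $m\geq 4$: $4nm+m(m+2)\geq 16n+24>12n+15$. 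Hence $\tilde f=r^{-3}f\in C^{\infty}_0M\oplus C^{\infty}_1M\oplus C^{\infty}_3M$.

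The main obstacle I anticipate is bookkeeping the arithmetic inequalities cleanly for all $m$ simultaneously — in particular making the $m=2$ comparisons watertight, since that is where the sharp value $8n$ and the refined bound $16(n+1)$ both intervene, and where one must carefully invoke Theorem \ref{thm1-half} to know $4nm$ is \emph{not} attained for $m\geq 3$ in the relevant range. A secondary point requiring care is justifying the $L^2$-convergent $\su(2)$-isotypical decomposition $\tilde f=\sum_m\tilde f_m$ and that each $\tilde f_m$ is smooth and satisfies the stated eigenvalue equation for $\Delta_{\Hh}$ — this follows from $[\Delta^g,\C]=0$, density of $\bigoplus_m C^{\infty}_mM$ in $L^2M$, and elliptic regularity, exactly as in section \ref{lb1}, but should be stated. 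Once the decomposition is in place the three cases are a finite, explicit check.
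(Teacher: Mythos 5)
Your proof is correct and follows essentially the same route as the paper: decompose $\tilde f=r^{-\tau}f$ into the $\su(2)$-isotypic pieces $C_m^{\infty}M$, convert $\Delta^{g_{\q}}f=0$ via \eqref{lapl-c0} into $\Delta_{\Hh}\tilde f_m=\bigl(\tau(\tau+4n+2)-m(m+2)\bigr)\tilde f_m$, and compare against the bounds of Propositions \ref{est-f1} and \ref{est2}, Corollary \ref{basic1} and Theorems \ref{thm1-half}, \ref{lim-f}. One small caveat: your setup remark that the eigenvalue $4nm$ occurs on $C_m^{\infty}M$ only for $m=2$ is false (Proposition \ref{poly} produces elements of $\ker(\Delta_{\Hh}-8nl)\cap C_{2l}^{\infty}M$ for every $l$), but your actual case-by-case checks never rely on it --- in particular you correctly retain the $m=3$ limiting eigenspace in part (iii) --- so nothing breaks.
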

\begin{proof}
(i) As already discussed we have $\tau \geq 2$ directly from \eqref{estRR1}.
The description of $f$ when $\tau=2$ follows from Theorem \ref{RL-T}.\\
(ii) Then $12n+15> \tau(\tau+4n+2)>8n+8$. Since $\Delta^g$ is $\su(2)$-invariant, it is enough 
to examine which components of $\tilde{f}=r^{-\tau}f$ on the weighted spaces $C_m^{\infty}M$  with $m \in \mathbb{N}$ may occur.  When $m \geq 3$ we have 
$$\Delta^g=\Delta_{\Hh}+\C\geq 4nm+m(m+2) \geq 12n+15$$
by Proposition \ref{est-f1}. Thus $\tilde{f}$ can only have components in $C_m^{\infty}M$ for $m \in \{0,1,2\}$. For $m=1$ the space $\ker(\Delta_{\Hh}-4nm)$ vanishes, see Proposition \ref{est-f1},(ii); for $m=2$ the function $\tilde{f}$ belongs to $C_m^{\infty}M\cap \ker(\Delta_{\Hh}-4nm)^{\perp}$ since its corresponding eigenvalue $\tau(\tau+4n+2)>8n+8>8n$ by \eqref{eqn-57}.
Thus in both cases we can apply Proposition \ref{est2} to get the lower bound 
$$\Delta^g=\Delta_{\Hh}+\C \geq (n+1)(4m+8)+m(m+2) \geq 12n+15$$ on $C_m^{\infty}M\cap \ker(\Delta_{\Hh}-4nm)^{\perp}$ for $m \in \{1,2\}$. 
Since $12n+15> \tau(\tau+4n+2)$ we conclude that $\tilde{f}$ must be invariant, as claimed.\\
(iii) is proved by arguments similar to those in (ii).
\end{proof}
\begin{rem}\label{tau=3}
Under the assumptions in Theorem \ref{gap}, (iii), the component of $r^{-3}f$ on $C_3^{\infty}M$ belongs to $\ker(\Delta_{\Hh}-12n)$ and thus it is further characterised as in Proposition \ref{est-f1}, (i). 
When $\SO(3) \subseteq \Aut(M,g)$ the spaces $C_{m}^{\infty}M$ vanish for $m$ odd (see also the proof of Theorem \ref{main3}), which enables the much stronger conclusion 
$r^{-3}f \in C_0^{\infty}M$.
\end{rem}
\subsection{Holomorphic functions on hyperk\"ahler cones} \label{hol-desc}
Here the aim is -- with motivation partly stemming from Remark \ref{proof21} -- to explore the holomorphic content of the limiting eigenspaces 
$\ker(\Delta_{\Hh}-4nm) \cap C_m^{\infty}M$. Indicate with $\mathrm{Hol}_{\tau}(CM,J_1)$ the space of functions $f:CM \to \bbC$ which are holomorphic with respect to $J_1$, that is $J_1\di\!f=i\di\!f$, and also $\tau$-homogeneous, for some $\tau \in \bbR$. It is a simple observation that functions $f \in \mathrm{Hol}_{\tau}(CM,J_1)$ satisfy 
\begin{equation} \label{holf11}
\L_{\xi_1}\tilde{f}=-i\tau \tilde{f} \ \mbox{and} \ \L_{\xi_3}\tilde{f}=i\L_{\xi_2}\tilde{f}.
\end{equation}
This follows by evaluating the holomorphy requirement on $f$ on the vector fields $\partial_r$ respectively $\xi_2$, whilst using \eqref{alg-J}. Denoting 
$$\mathscr{E}_m:=\ker(\Delta_{\Hh}-4nm) \cap C_m^{\infty}M \cap \ker(\L_{\xi_1}^2+m^2).$$
for $m \in \mathbb{N}$, we prove that 
\begin{propn} \label{hol-cone}
The following hold 
\begin{itemize}
\item[(i)] $\mathrm{Hol}_{\tau}(CM,J_1)=0$ unless $\tau \in \mathbb{N}$
\item[(ii)] for any $m \in \mathbb{N}$ the map 
$$ f \in \mathrm{Hol}_{m}(CM,J_1) \mapsto r^{-m}\re(f) \in \mathscr{E}_m
$$
is a linear isomorphism 
\item[(iii)] when $\tau=2$ we have $\g \oplus \g \cong \mathrm{Hol}_2(CM,J_1)$ via the map
$$(X,Y) \mapsto r^2(\mu_2+i\mu_3)X-ir^2(\mu_2+i\mu_3)Y.$$
\end{itemize}
\end{propn}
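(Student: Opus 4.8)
The plan is to handle the three parts in order, with the surjectivity in (ii) as the technical heart and (i), (iii) as consequences. The basic input is the well-known fact that a $J_1$-holomorphic function $f$ on the K\"ahler manifold $(CM,g_{\q},J_1)$ is harmonic; hence if $f$ is moreover $\tau$-homogeneous then $\tilde f:=r^{-\tau}f$ satisfies $\Delta^g\tilde f=\tau(\tau+4n+2)\tilde f$ by \eqref{eqn-57}. Writing $A:=\L_{\xi_1}$ and $F^{\pm}:=\L_{\xi_2}\mp i\L_{\xi_3}$, the $\su(2)$-relations \eqref{su2} give $\C=-A^2-F^+F^-+2iA$, while \eqref{holf11} says precisely $A\tilde f=-i\tau\tilde f$ and $F^-\tilde f=0$; plugging in yields $\C\tilde f=\tau(\tau+2)\tilde f$ and therefore $\Delta_{\Hh}\tilde f=\Delta^g\tilde f-\C\tilde f=4n\tau\,\tilde f$ by Lemma \ref{s2l1}(iii). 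In particular $\L^2_{\xi_1}\tilde f=-\tau^2\tilde f$, so $\tau^2=r^2k^2$ for some $k\in\ZM$ by Lemma \ref{reg-3}, forcing $\tau\in\ZM$; and if $f\neq 0$ is non-constant Theorem \ref{gap}(i) gives $\tau\ge 2$. Either way $\tau\in\NM$, which is (i).

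\textbf{Part (ii), well-definedness and injectivity.} The computation above shows that for $f\in\mathrm{Hol}_m(CM,J_1)$ the function $\tilde f=r^{-m}f$ lies in $\ker(\Delta_{\Hh}-4nm)\cap C^{\infty}_m(M,\CM)\cap\ker(\L^2_{\xi_1}+m^2)$; taking real parts (all the operators involved are real) puts $r^{-m}\re(f)$ in $\mathscr{E}_m$, so the map is well defined. For injectivity (I treat $m\ge 1$): if $\re(f)=0$ then $f=iv$ with $v$ real, and $\bar\partial f=0$ together with $v$ real forces $dv=0$, so $v$ is constant by connectedness of $CM$ and hence $f=0$ since $f$ is $m$-homogeneous.

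\textbf{Part (ii), surjectivity --- the main obstacle.} Given $h\in\mathscr{E}_m$, set $h_-:=\tfrac12\bigl(h+\tfrac{i}{m}\L_{\xi_1}h\bigr)\in C^{\infty}(M,\CM)$. Then $\L_{\xi_1}h_-=-imh_-$, and since $\Delta_{\Hh}$ and $\C$ commute with $\L_{\xi_1}$ (being $\su(2)$-invariant), $h_-$ inherits $\Delta_{\Hh}h_-=4nmh_-$ and $\C h_-=m(m+2)h_-$; feeding the latter into $\C=-A^2-F^+F^-+2iA$ gives $F^+F^-h_-=0$, hence $F^-h_-=0$ because $(F^+)^{\star}=-F^-$, i.e. $\L_{\xi_3}h_-=i\L_{\xi_2}h_-$. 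I then claim $f:=2r^mh_-$ is the required preimage: indeed $\re(f)=r^m(h_-+\overline{h_-})=r^mh$, so $r^{-m}\re(f)=h$, and it remains to check $f$ is $J_1$-holomorphic, i.e. $df(J_1V)=i\,df(V)$ for $V$ in the frame $\{\partial_r,\xi_1,\xi_2,\xi_3\}\cup\Hh$. Using \eqref{alg-J} this is immediate for $V\in\{\partial_r,\xi_1,\xi_2,\xi_3\}$ from $\L_{\xi_1}h_-=-imh_-$ and $\L_{\xi_3}h_-=i\L_{\xi_2}h_-$; the one remaining case $V\in\Hh$ reduces to the identity $I_1\d_{\Hh}h_-=i\,\d_{\Hh}h_-$, which is the crux and which I would prove by an $L^2$ argument. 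Writing $\gamma:=\d_{\Hh}h_-$ and using that $I_1$ is orthogonal and skew one gets $\Vert(I_1-i)\gamma\Vert^2_{L^2}=2\Vert\gamma\Vert^2_{L^2}+2i\langle I_1\gamma,\gamma\rangle_{L^2}$; integration by parts together with Lemma \ref{h-diff}(i) in the form $\d_{\Hh}^{\star}I_1\d_{\Hh}=-4n\L_{\xi_1}$ and with $\Delta_{\Hh}h_-=4nmh_-$ gives $\langle I_1\gamma,\gamma\rangle_{L^2}=-4n\langle\L_{\xi_1}h_-,\overline{h_-}\rangle_{L^2}=4inm\Vert h_-\Vert^2_{L^2}$ and $\Vert\gamma\Vert^2_{L^2}=\langle\Delta_{\Hh}h_-,\overline{h_-}\rangle_{L^2}=4nm\Vert h_-\Vert^2_{L^2}$; substituting, $\Vert(I_1-i)\gamma\Vert^2_{L^2}=8nm\Vert h_-\Vert^2_{L^2}-8nm\Vert h_-\Vert^2_{L^2}=0$, so $(I_1-i)\gamma=0$. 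This yields surjectivity and hence (ii).

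\textbf{Part (iii) and the main difficulty.} By Theorem \ref{lim-f} the map $\widetilde\mu$ identifies $\ker(\Delta_{\Hh}-8n)\cong\gg^{\oplus 3}$, and $\ker(\Delta_{\Hh}-8n)\subseteq C^{\infty}_2M$ by Theorem \ref{thm1-half}, so $\mathscr{E}_2$ is the subspace on which $\L^2_{\xi_1}=-4$. A direct computation from \eqref{inf1} gives $\L^2_{\xi_1}\widetilde\mu(X_1,X_2,X_3)=-4\mu_2(X_2)-4\mu_3(X_3)$, so the condition forces $\mu_1(X_1)=0$, hence $X_1=0$ by the injectivity of $\mu_1|_{\gg}$ coming from \eqref{iso-S}; thus $\mathscr{E}_2=\{\mu_2(X)+\mu_3(Y):X,Y\in\gg\}\cong\gg\oplus\gg$. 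Composing $(X,Y)\mapsto\mu_2(X)+\mu_3(Y)$ with the inverse $h\mapsto 2r^2h_-$ of the isomorphism from (ii) with $m=2$, and simplifying $2h_-=(\mu_2+i\mu_3)X-i(\mu_2+i\mu_3)Y$ via $\L_{\xi_1}\bigl(\mu_2(X)+\mu_3(Y)\bigr)=2\mu_3(X)-2\mu_2(Y)$, reproduces exactly the stated map $(X,Y)\mapsto r^2(\mu_2+i\mu_3)X-ir^2(\mu_2+i\mu_3)Y$, which is therefore a composite of isomorphisms. The hard part throughout is the identity $I_1\d_{\Hh}h_-=i\,\d_{\Hh}h_-$ in the surjectivity step: this is the only place where the sub-Laplacian eigenvalue equation genuinely enters --- the holomorphy conditions along $\Vv$ follow purely from $\su(2)$-weight data --- and it hinges on the interplay between $\d_{\Hh}^{\star}I_1\d_{\Hh}$ and $\L_{\xi_1}$ recorded in Lemma \ref{h-diff}(i).
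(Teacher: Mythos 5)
Your proof is correct and follows essentially the same route as the paper: the same weight computation showing $\tilde f\in C^{\infty}_m(M,\CM)$ with $\Delta_{\Hh}\tilde f=4nm\tilde f$, the same inverse $u\mapsto r^m\bigl(u+\tfrac{i}{m}\L_{\xi_1}u\bigr)$, and the same two $L^2$-norm computations (your ladder-operator derivation of $F^-h_-=0$ is the paper's expansion of $\Vert\L_{\xi_3}F-i\L_{\xi_2}F\Vert^2_{L^2}$ in different notation, and the horizontal step via Lemma \ref{h-diff}(i) is identical). The only additions are your explicit injectivity check for $m\geq 1$ and the appeal to Theorem \ref{gap}(i) to pin down the sign of $\tau$ in part (i), both of which the paper leaves implicit.
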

\begin{proof}
(i) According to \eqref{holf11} the function $\tilde{f}$ satisfies $\L^2_{\xi_1}\tilde{f}=-\tau^2\tilde{f}$.
Since $(g,\xi_1)$ is quasi-regular by Lemma \ref{reg-3}
the claim is proved. \\
(ii) We first show that $\tilde{f}$, where $f \in \Hol_m(CM,J_1)$, belongs to $C_m^{\infty}(M,\bbC)$. This follows, after also taking \eqref{holf11} into account, from
\begin{equation*}
\begin{split}
\C \tilde{f}=&m^2\tilde{f}-\L_{\xi_2}^2\tilde{f}-\L_{\xi_3}^2\tilde{f}=m^2\tilde{f}+i(\L_{\xi_2}\L_{\xi_3}-\L_{\xi_3}\L_{\xi_2})\tilde{f}\\
=&m^2\tilde{f}+i\L_{[\xi_2,\xi_3]}\tilde{f}=m^2\tilde{f}+2i\L_{\xi_1}\tilde{f}=(m^2+2m)\tilde{f}.
\end{split}
\end{equation*}
Since $f$ is holomorphic, it is in particular harmonic, thus $\Delta_{\Hh}\tilde{f}=(\Delta^g-\C)\tilde{f}=4nm\tilde{f}$ after using \eqref{eqn-57}. Writing 
$\tilde{f}=u+iv$ thus ensures that $u$ belongs to $\mathscr{E}_m$.

Conversely, letting $u$ belong to $\mathscr{E}_m$ the function $F:=u+\frac{i}{m}\L_{\xi_1}u$ satisfies 
$$\L_{\xi_1}F=-im F \ \mbox{and} \  \C F=m(m+2)F.$$ 
We will show that $f:=r^mF \in \mathrm{Hol}_m(CM,J_1)$ in two steps, as follows. First we compute 
\begin{equation*}
\begin{split}
\Vert \L_{\xi_3}F-i\L_{\xi_2}F\Vert^2_{L^2}=&\Vert \L_{\xi_3}F\Vert^2_{L^2}+\Vert \L_{\xi_2}F\Vert^2_{L^2}-i\langle \L_{\xi_2}F,\L_{\xi_3}F\rangle_{L^2}+i\langle \L_{\xi_3}F,\L_{\xi_2}F\rangle_{L^2}\\
=&-\langle (\L_{\xi_2}^2+\L_{\xi_3}^2)F,F\rangle_{L^2}+i\langle F, [\L_{\xi_2},\L_{\xi_3}]F \rangle_{L^2}\\
=& \langle (\C+\L_{\xi_1}^2)F,F\rangle_{L^2}+2i
\langle F,\L_{\xi_1}F\rangle_{L^2}=0.
\end{split}
\end{equation*}
Hence $\L_{\xi_3}F=i\L_{\xi_2}F$ which amounts to the vanishing of $J_1\di\!f-i\di\!f$ on the distribution $ \spa\{\partial_r\}\oplus \Vv \subseteq TCM$. There remains to prove the vanishing of the horizontal component in $J_1\di\!f-i\di\!f$; equivalently we have to show that 
$I_1\di_{\Hh}F=i\di_{\Hh}F$. This follows from computing the norm 
\begin{equation*}
\begin{split}
\Vert I_1\di_{\Hh}F-i\di_{\Hh}F\Vert^2_{L^2}=&2\Vert \di_{\Hh}\!F\Vert^2_{L^2}-i\langle \di_{\Hh}\!F,I_1\di_{\Hh}F\rangle_{L^2}+i
\langle I_1\di_{\Hh}\!F, \di_{\Hh}F\rangle_{L^2}\\
=&2 \langle \Delta_{\Hh}F,F \rangle_{L^2}-8nm \Vert F\Vert^2_{L^2}=0
\end{split}
\end{equation*} 
after using that $\di_{\Hh}^{\star}\!I_1 \dH\!F=-4n\L_{\xi_1}F=4nmiF$ as granted by Lemma \ref{h-diff}, (i).\\
(iii) Due to the isomorphism in (ii) it is enough to describe the space $\mathscr{E}_2$. According to Theorem \ref{lim-f} any 
$u \in \mathscr{E}_2$ can be written as $u=\mu_1(X_1)+\mu_2(X_2)+\mu_3(X_3)$ with $X_a \in \gg$. The $\su(2)$-invariance properties of the tri-moment map in \eqref{inf1} grant that $\L_{\xi_1}^2u=-4(\mu_2(X_2)+\mu_3(X_3))$ hence having $u \in \ker(\L_{\xi_1}^2+4)$ forces 
$\mu_1(X_1)=0$. Because the components of tri-moment map are injective by \eqref{iso-S}, we get $X_1=0$. In particular we see that $\L_{\xi_1}u=2(\mu_3(X_2)-\mu_2(X_3))$. The claim follows by recording that the inverse of the isomorphism in (ii) reads 
$u \mapsto r^2(u+\frac{i}{2}\L_{\xi_1}u)$.
\end{proof}
\begin{rem} \label{reg-hol}
When $(M,g)$ is regular and simply connected it is well-known that $M$ can be identified with the sphere bundle in $L_Z^{-1}$ in such a way that the induced circle action on $M$ is tangent to $\xi_1$.
Here $Z$ is the twistor space of $Q=M\slash\mathcal{F}$ and $L_Z$ is the contact line bundle of $Z$. It follows that $\Hol_m(CM,J_1)$ is naturally identified with the space of holomorphic sections of the positive complex line bundle 
$L_Z^{m}$. 
\end{rem}

\end{document}